\documentclass[reqno]{amsart}
\usepackage{amssymb,amscd,bbm,graphicx,mathtools,verbatim}
\input MnSymbol.tex
\usepackage[colorlinks]{hyperref}

\newtheorem{thm}{Theorem}[section]

\newtheorem{lem}[thm]{Lemma}
\newtheorem{prop}[thm]{Proposition}

\theoremstyle{definition}
\newtheorem{dfn}[thm]{Definition}

\newtheorem{hyp}[thm]{Hypothesis}

\numberwithin{equation}{section}

\DeclarePairedDelimiter{\norm}{\lVert}{\rVert}
\newcommand{\lVVert}{\lvert\mkern-1.5mu\lvert\mkern-1.5mu\vert}
\newcommand{\rVVert}{\rvert\mkern-1.5mu\rvert\mkern-1.5mu\vert}
\DeclarePairedDelimiter{\tnorm}{\lVVert}{\rVVert}
\DeclarePairedDelimiter{\scp}{\langle}{\rangle}
\DeclarePairedDelimiter{\dscp}{\llangle}{\rrangle}

\renewcommand{\Im}{\operatorname{Im}}
\newcommand{\supp}{\operatorname{supp}}

\newcommand{\iu}{\mathrm{i}}

\newcommand{\dom}{\operatorname{dom}}
\newcommand{\ran}{\operatorname{ran}}

\newcommand{\BVl}{\operatorname{BV}_{\rm loc}}

\newcommand{\sm}[1]{\big(\begin{smallmatrix}#1\end{smallmatrix}\big)}
\newcommand{\bsm}[1]{\begin{pmatrix}#1\end{pmatrix}}
\newcommand{\bb}[1]{{\mathbb{#1}}}
\newcommand{\mc}[1]{{\mathcal{#1}}}
\newcommand{\id}{\mathbbm 1}
\renewcommand{\vec}[1]{\overset{{}_{\rightharpoonup}}{#1}}
\newcommand{\cev}[1]{\overset{{}_{\leftharpoonup}}{#1}}
\newcommand{\cevec}[1]{\overset{{}_{\leftrightarrow}}{#1}}
\newcommand{\setsum}{\sqcup}
\newcommand{\dirsum}{\uplus}
\newcommand{\ov}{\overline}
\newcommand{\ol}{{\overline{\lambda}}}
\newcommand{\la}{\lambda}
\newcommand{\loc}{{\rm loc}}
\newcommand{\mx}{{\rm max}}
\newcommand{\mn}{{\rm min}}
\newcommand{\mnp}[1]{\marginpar{\textsf{#1}}}
\newcommand{\sas}{\;\;\text{and}\;\;}

\newcommand{\<}{\langle}
\renewcommand{\>}{\rangle}

\begin{document}
\title[Self-adjoint extensions]{Self-adjoint extensions of symmetric relations associated with systems of ordinary differential equations with distributional coefficients}
\author{Steven Redolfi and Rudi Weikard}
\address{Department of Mathematics, University of Alabama at Birmingham, Birmingham, AL 35226-1170, USA}
\email{stevenre@uab.edu, weikard@uab.edu}
\date{\today}

\dedicatory{Dedicated to Bernard Helffer on the occasion of his 75th birthday}

\keywords{Self-adjoint extensions, Distributional coefficients, Relations}

\subjclass[2020]{34L05, 47B25, 47A06}

\begin{abstract}
We study the extension theory for the two-dimensional first-order system $Ju' +qu = wf$
of differential equations on the real interval $(a,b)$ where $J$ is a constant,
invertible, skew-hermitian matrix and $q$ and $w$ are matrices whose entries are real
distributions of order $0$ with $q$ hermitian and $w$ non-negative. Specifically,
we characterize the boundary conditions for solutions $u$ in the closure of the minimal
relation, as well as describe the properties of quasi-boundary conditions which yield
self-adjoint extensions. We then apply these ideas to a popular extension of non-negative
minimal relations: the Krein-von Neumann extension. For more context on how the Krein-von Neumann
is defined, an appendix shows a construction of the Friedrichs extension
from which the Krein-von Neumann is traditionally defined.
\end{abstract}
\maketitle

\section{Introduction}
In this paper we investigate the extension theory of symmetric linear relations associated with the differential equation
\begin{equation}\label{de}
Ju'+(q-\la w)u=wf
\end{equation}
on the interval $(a,b)\subset\bb R$ where $\la$ is a complex parameter and the coefficients $J$, $q$, and $w$ satisfy the following hypothesis.
\begin{hyp}\label{hyp:m}
$J=\sm{0&-1\\ 1&0}$ and $q$ and $w$ are $2\times 2$-matrices whose entries are real distributions of order $0$ on the interval $(a,b)\subset\bb R$; $q$ is hermitian and $w$ is non-negative.
\end{hyp}

We assume that $f$ is in $L^2(w)$, a Hilbert space to be defined below, and seek solutions among the functions of locally bounded variation.
In this case $u'$ is a distribution of order $0$ and one may define the products $qu$, $wu$, and $wf$ also as distributions of order $0$.
Therefore it makes sense to pose equation \eqref{de}.
We emphasize that one cannot allow the coefficients to be rougher than what we have specified above.
If they were rougher, one would have to admit solutions that are rougher than functions of locally bounded variation and it would then be impossible to define the products $qu$ and $wu$.
To be sure, our approach is general enough to include the classical Sturm-Liouville equation, the analogous difference equations, as well as equations of Dirac-type.
Let us also highlight that, under the given circumstances, the existence and uniqueness of solutions to initial value problems for equation \eqref{de} cannot be guaranteed.

In \cite{MR4047968} the basic spectral theory for equation \eqref{de} was developed (even when the size of the system is more than $2$) but it was assumed
that, when $\la=0$, initial value problems for equation \eqref{de} have unique solutions.
Associated with the equation \eqref{de} are a maximal linear relation $T_\mx$ and a minimal relation $T_\mn$ satisfying, by construction, $T_\mn\subset T_\mx$.
These satisfy the relationship $T_\mn^*=T_\mx$, the cornerstone of spectral theory showing that $T_\mn$ is a symmetric relation.
Moreover, the self-adjoint restrictions of $T_\mx$ were characterized and a Fourier expansion theorem was shown to hold (under additional, perhaps technical assumptions).

The assumption that for $\la=0$ solutions can be continued uniquely across all of $(a,b)$ was found to be unnecessary for several of the results mentioned above: the relationship $T_\mn^*=T_\mx$ was shown to hold in general in \cite{MR4298818}; in \cite{MR4431055} it was shown that the deficiency indices are finite even though the equation $Ju'+qu=\la wu$ may have infinitely many linearly independent solutions; and in \cite{REDOLFI2024110370} the hypotheses for the existence of a Fourier expansion were greatly weakened.
Other developments were carried out in \cite{MR4410825}, \cite{MR4612314}, and \cite{NGUYEN2024128137} (oscillation theory); in \cite{MR4651276} (Floquet theory); and in \cite{BW} (on Weyl's limit-point/limit-circle classification in the case of two-dimensional systems).

The first to allow coefficients more general than locally integrable ones was (as far as we know) Krein \cite{MR0054078} in 1952 when he modeled a vibrating string.
Further contributions\footnote{This list is, of course, not exhaustive, please see \cite{MR3046408} and \cite{MR4047968} for more details.} were made by Kac \cite{MR0080835}, Feller \cite{MR0068082}, Atkinson \cite{MR0176141}, Mingarelli \cite{MR706255}, Gesztesy and Holden \cite{MR914699}, Savchuk and Shkalikov \cite{MR1756602}, and Eckhardt et al. \cite{MR3046408}.

After recalling and extending some basic facts about the differential equation \eqref{de} in Section \ref{S:2} we discuss in Section \ref{S:3} Weyl's alternative, which establishes a link between the deficiency indices of the minimal relation and the behavior of the coefficients $q$ and $w$ near the endpoints $a$ and $b$.
Section \ref{S:4} is devoted to a formulation of the boundary conditions in more classical terms compared to the abstract formulation in \cite{MR4047968}.
Finally, we discuss the boundary conditions satisfied by the Krein-von Neumann extension of $T_\mn$ (when this is a non-negative relation) in Section \ref{S:5}.
Our definition of the Krein-von Neumann extension is via its relationship to the Friedrichs extension.
Since its definition and main properties are not so well-known in the context of linear relations we present our point of view on the matter in Appendix \ref{A:A}.

We close this introduction with a few words on notations.
The transpose of a matrix or a vector $x$ is written as $x^\top$.
In particular, we may write vectors in $\bb C^n$ as transposes of rows for typesetting purposes.
By $[u]$ we denote the class of all functions which are equal to $u$ almost everywhere with respect to the measure $w$.
The symbol $\id$ stands for the identity operator (in various contexts) and the closure of a set $E$ is written as $\ov E$.
Lebesgue measure and the Dirac measure concentrated on the point $k$ are denoted by $dx$ and $\delta_k$, respectively.

We say that $u$ has finite norm (or zero norm) near $b$ if there is a $c\in(a,b)$ such that $\int_{(c,b)}u^*wu<\infty$ (or $\int_{(c,b)}u^*wu=0$).

We have to distinguish between four kinds of sums of sets.
Given subspaces $A$ and $B$ of some vector space $V$ we use $A\setsum B$ to denote the span of $A\cup B$ and,  when $A\cap B=\{0\}$, we use instead $A\uplus B$ to emphasize that the sum is direct.
If $V$ is an inner product space and $A$ and $B$ are orthogonal to each other we write for their direct sum $A \oplus B$.
Finally, if $S$ and $T$ are relations, then $S+T=\{(u,s+t):(u,s)\in S\text{ and }(u,t)\in T\}$.

\section{Preliminaries}\label{S:2}
\subsection{The differential equation}
Given a distribution $h$ of order $0$, Riesz's representation theorem provides a unique positive Borel measure $\mu$ and a unique function $g\in
L^1_\loc(\mu)$ such that $h(\psi)=\int \psi g\mu$ for every $\psi\in C^\infty_0((a,b))$.
Conversely, for any such measure $\mu$ and function $g$ the assignment $\psi\mapsto \int \psi g\mu$ is a distribution of order $0$.
Thus we have a one-to-one correspondence between distributions of order~$0$ and local\footnote{$g\mu$ is a complex measure on each compact subset of $(a,b)$.} measures $g\mu$.
It is standard to identify $h$ and $g\mu$ and, in such a setting, to write $h(E)=\int_Eg\mu$, whenever this integral exists.
Any cumulative distribution function $H$ of $h=g\mu$ is a function of locally bounded variation which itself gives rise to a distribution of order $0$ (also denoted by $H$) by setting $H(\psi)=\int H\psi\; dx$.
The distributional derivative of $H$ is $H'=h$ and the (local) Lebesgue-Stieltjes measure generated by $H$ is $dH=g\mu$.

In order to make sense of the differential equation we require that $u$ is a function of locally bounded variation%
\footnote{In fact we ask that $u$ is \emph{balanced}, a term defined below.}
so that $u'$ is a distribution of order $0$.
The product $qu$ is then defined by $(qu)(\psi)=\int \psi u\,dQ$ where $Q$ is a cumulative distribution function (or anti-derivative) of $q$.
To define $wf$ we note first that $w$ is a positive (matrix-valued) measure.
Thus, if $f\in L^1_\loc(w)$, we may define $(wf)(\psi)=\int \psi f\,dW$ where $W$ is a cumulative distribution function (or anti-derivative) of $w$.
These considerations\footnote{For more details the reader may consult \cite{MR4047968}.} show that each term in $Ju'+qu=wf$ is a distribution of order $0$ so that it makes sense to pose the equation.
The same considerations work, of course, for the equation $Ju'+(q-\la w)u=wf$ when $\la$ is a complex parameter.
We emphasize that, as the coefficients in the equation $Ju'+qu=wf$ become rougher, the solutions $u$ must, too.
At some point it becomes impossible to define the products $qu$ and $wu$.
This point is reached when $q$ or $w$ are rougher than distributions of order $0$.

If the measures $q$ or $w$ have atomic parts existence and uniqueness of solutions of initial value problems are in question.
To investigate this issue we need to introduce some notation.
Let $g$ be a function of locally bounded variation.
Then we denote its right-continuous and left-continuous versions respectively by $g^+$ and $g^-$,
i.e., we set $g^+(s)=\lim_{t\downarrow s}g(t)$ and $g^-(s)=\lim_{t\uparrow s}g(s)$.
We also introduce $g^\#=(g^++g^-)/2$ and call $g$ \emph{balanced} if $g=g^\#$.
We denote the space of ordered pairs of balanced functions of locally bounded variation on $(a,b)$ by $\BVl^\#((a,b))^2$.
Then we define the matrices $\Delta_q(x)=q(\{x\})=Q^+(x)-Q^-(x)$, $\Delta_w(x)=w(\{x\})=W^+(x)-W^-(x)$, and
$$B_\pm(x,\la)=J\pm\frac12(\Delta_q(x)-\lambda\Delta_w(x)).$$

We gather the most basic facts about the matrices $B_\pm(x,\la)$.
\begin{prop}\label{P:2.1}
The following statements hold true:
\begin{enumerate}
  \item Each of $B_\pm(x,\la)$ and $B_\pm(x,\ol)$ is invertible, if any one of them is.
  \item $\ker B_-(x,\la)\cap\ker B_+(x,\la)$ is always trivial.
  \item Neither $B_+(x,\la)$ nor $B_-(x,\la)$ can be $0$.
  \item $\ran B_-(x,\la)\cap\ran B_+(x,\la)$ is either trivial or all of $\bb C^2$.
\end{enumerate}
\end{prop}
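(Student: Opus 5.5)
Write $M=\frac12(\Delta_q(x)-\la\Delta_w(x))$, so that $B_\pm=J\pm M$. Since $q$ and $w$ are real and hermitian, $\Delta_q$ and $\Delta_w$ are real symmetric $2\times2$ matrices. Hence $M$ is a complex symmetric matrix, $M=\sm{\alpha&\beta\\ \beta&\gamma}$, and it depends on $\la$ only through $\Delta_q-\la\Delta_w$. The plan is to compute everything explicitly, since we are in dimension two and $J=\sm{0&-1\\1&0}$.

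For (1) we compute the determinants:
\[
\det(J\pm M)=\det\sm{\pm\alpha & -1\pm\beta\\ 1\pm\beta & \pm\gamma}=\alpha\gamma-(\beta^2-1)=\det M+1 .
\]
Thus $\det B_+(x,\la)=\det B_-(x,\la)=1+\det M(\la)$, so $B_+(x,\la)$ and $B_-(x,\la)$ are invertible simultaneously. For $\ol$, the entries of $\Delta_q$ and $\Delta_w$ are real. Therefore $M(\ol)=\ov{M(\la)}$ and $\det B_\pm(x,\ol)=\ov{1+\det M(\la)}$, which vanishes exactly when $1+\det M(\la)$ does. This settles (1).

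For (2), suppose $v\in\ker B_+\cap\ker B_-$. Adding and subtracting the two equations gives $2Jv=0$ and $2Mv=0$. Since $J$ is invertible, $v=0$.

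For (3), if $B_+=0$ then $M=-J$. But $M$ is symmetric and $-J$ is skew-symmetric and nonzero, a contradiction; the same argument handles $B_-$. Alternatively, (3) follows from (2): if $B_+=0$ then $\ker B_+=\bb C^2$, so $\ker B_-$ would have to be trivial, and $B_-=B_++2J=2J$ is indeed invertible. That alone is no contradiction, so the symmetric-versus-skew argument is the one that actually decides (3).

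For (4), which I expect to be the only step needing care, split into cases. If $1+\det M\neq0$, then both $B_\pm$ are invertible by the computation in (1), so both ranges equal $\bb C^2$ and so does their intersection. If $1+\det M=0$, then both are singular. By (3) neither is zero, so both ranges are one-dimensional, and it must be shown that they differ.

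To handle this case, use the transpose relation: since $M^\top=M$ and $J^\top=-J$, we have $B_+^\top=-J+M=-B_-$. Hence $\ran B_-=(\ker B_-^{\top})^{\perp_{\mathrm{bil}}}=(\ker B_+)^{\perp_{\mathrm{bil}}}$ with respect to the bilinear pairing $x^\top y$. Similarly $\ran B_+=(\ker B_-)^{\perp_{\mathrm{bil}}}$. Both kernels are lines, and by (2) they are distinct. In $\bb C^2$, the bilinear annihilator of a line $\bb C v$ is the line $\bb C(Jv)$, so distinct lines have distinct annihilators. Therefore $\ran B_+\neq\ran B_-$, and two distinct lines in $\bb C^2$ meet only in $\{0\}$. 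This completes (4).
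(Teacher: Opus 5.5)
Your proof is correct and essentially follows the paper: (2) is identical, your formula $\det B_\pm(x,\la)=1+\det M$ is a hands-on version of the paper's identity $B_+(x,\la)=-B_-(x,\la)^\top=-B_-(x,\ol)^*$ for (1), and in (4) you use the same idea of identifying the ranges with annihilators of the two distinct kernel lines (bilinearly via $B_+^\top=-B_-$ at $\la$, where the paper uses Hermitian orthogonal complements of $\ker B_\mp(x,\ol)$). The one divergence is (3): the paper uses exactly the route you set aside, because $B_+=0$ makes $B_-=2J$ invertible, which contradicts (1), so that argument does close; your symmetric-versus-skew argument is equally valid.
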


\begin{proof}
Since $B_+(x,\la)=-B_-(x,\la)^\top=-B_-(x,\ol)^*$ we obtain the first claim.
To prove the second, assume that $z\in\ker B_-(x,\la)\cap\ker B_+(x,\la)$.
Then
$$0=B_-(x,\la)z+B_+(x,\la)z=2J z$$ which implies $z=0$.
For the proof of (3) assume, by way of contradiction, that $B_+(x,\la)=0$.
From (2) we get then that $B_-(x,\la)$ is invertible.
This is, in view of (1), impossible.
Finally, if $B_\pm(x,\la)$ are invertible, then the intersection of their ranges is all of $\bb C^2$.
Otherwise, $\dim \ran B_\pm(x,\la)=\dim\ker B_\mp(x,\ol)=1$ and
\[\ran B_-(x,\la)\cap\ran B_+(x,\la)=(\ker B_+(x,\ol))^\perp\cap(\ker B_-(x,\ol))^\perp=\{0\}.\qedhere\]
\end{proof}

With these preparations we can now state the existence and uniqueness theorem whose proof may be found in \cite{MR4047968}.

\begin{thm}\label{t:ex-un}
Suppose that Hypothesis \ref{hyp:m} holds.
Moreover, assume $f\in L^1_\loc(w)$, $\lambda\in\mathbb{C}$, $(c,d)\subset(a,b)$, $x_0\in(c,d)$, and $u_0\in\mathbb{C}^2$.
If the matrices $B_\pm(x,\lambda)$ are invertible for all $x\in(c,d)$, then the initial value problem $Ju'+(q-\lambda w)u=wf$, $u(x_0)=u_0$ has a unique balanced solution $u$ in $(c,d)$.

As long as $q$ and $w$ are finite measures near $c$ (or $d$), we may pose the initial condition at $c$ (or $d$), even if $c$ (or $d$) is infinite.
\end{thm}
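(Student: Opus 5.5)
The plan is to recast the initial value problem as an equivalent integral equation for balanced functions and then solve it by a Picard-type iteration. The interval is handled as a union of compact pieces, and the atoms of $q$ and $w$ are treated separately using the invertibility of $B_\pm$.

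\textbf{Step 1: integral form and jump relation.} Integrating the equation from $x_0$, a balanced $u\in\BVl^\#((c,d))^2$ solves $Ju'+(q-\la w)u=wf$ with $u(x_0)=u_0$ if and only if
\[Ju^\pm(x)=Ju_0-\int_{(x_0,x)}(dQ-\la dW)\,u+\int_{(x_0,x)}f\,dW\pm\tfrac12\bigl((\Delta_q(x)-\la\Delta_w(x))u(x)-\Delta_w(x)f(x)\bigr),\]
with the usual orientation convention for $x<x_0$. The endpoint terms at $x_0$ are handled in the same way. Substituting $u(x)=(u^+(x)+u^-(x))/2$, this amounts to the jump relation
\[B_+(x,\la)u^+(x)-B_-(x,\la)u^-(x)=\Delta_w(x)f(x)\]
at every atom $x$.

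\textbf{Step 2: local existence and uniqueness.} Split the measure $q-\la w$ into its continuous part and its atomic part. Since $|q|+|w|$ is locally finite, on any compact $[\alpha,\beta]\subset(c,d)$ only finitely many atoms have size exceeding a given $\varepsilon>0$.

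Between consecutive large atoms, the map $u\mapsto u_0-J^{-1}\int(\dots)$ acts on bounded functions on $[\alpha,\beta]$. It is a contraction once the total variation of $q-\la w$ on a subinterval is small, because the small atoms satisfy $\|\frac12 J^{-1}(\Delta_q-\la\Delta_w)\|<1$. A Gronwall-type estimate for Stieltjes integrals then gives a unique solution on such a piece. The standard device is the weighted norm $\sup|u(x)|e^{-K V(x)}$, where $V$ is the variation function. The main care needed is that a Stieltjes-Gronwall inequality holds only when the atoms are small, which is exactly why the large atoms are removed first.

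At each large atom $x$, continuation across $x$ is unique. Indeed, $u^+(x)=B_+(x,\la)^{-1}\bigl(B_-(x,\la)u^-(x)+\Delta_w(x)f(x)\bigr)$, since $B_+$ is invertible. Continuing leftwards instead uses $B_-^{-1}$. Concatenating the pieces gives existence and uniqueness on $[\alpha,\beta]$. By uniqueness, solutions on increasing compacts agree, so they patch together to a solution on $(c,d)$. I expect the main obstacle to be the bookkeeping at atoms, namely showing the patched function is balanced and satisfies the distributional equation, together with the Gronwall step.

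\textbf{Step 3: initial condition at an endpoint.} If $q$ and $w$ are finite measures near $c$ (with $f$ integrable there), the same contraction argument runs on $(c,x_1]$ with the initial value $u(c)=\lim_{x\downarrow c}u(x)$. This works because the total variation is finite and only finitely many large atoms occur, even if $c=-\infty$. The limit exists because $u$ then has bounded variation near $c$. The endpoint $d$ is handled symmetrically.
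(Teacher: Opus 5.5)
The paper does not prove this theorem itself; it quotes it from \cite{MR4047968}, so there is no in-paper argument to compare yours with. Your outline is the standard route and it is sound. You write the problem as a Stieltjes integral equation with half-weights at atoms. On stretches where every atom satisfies $\tfrac12\|J^{-1}(\Delta_q(x)-\lambda\Delta_w(x))\|\le\theta<1$, you solve it by a contraction in the weighted norm $\sup|u(x)|e^{-KV(x)}$; the half-atom at the current point is exactly what contributes $\theta$ to the contraction constant, on top of a term of order $1/K$. You then cross the finitely many remaining atoms in each compact explicitly, using $B_+(x,\lambda)^{-1}$ going right and $B_-(x,\lambda)^{-1}$ going left. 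Balancedness of the fixed point is automatic from the half-weight convention. Your extra assumption that $f$ be $w$-integrable near $c$ is genuinely needed for the endpoint statement when $c=a$.

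One slip needs correcting in Step 1. At an atom the equation gives $J(u^+(x)-u^-(x))=-\bigl((\Delta_q(x)-\lambda\Delta_w(x))u(x)-\Delta_w(x)f(x)\bigr)$. So in your display the half-jump term must carry $\mp$ rather than $\pm$. The common part must be $Ju(x)$, i.e.\ it must include half of the atoms at $x_0$ and at $x$. As written, the display contradicts the jump relation $B_+(x,\lambda)u^+(x)-B_-(x,\lambda)u^-(x)=\Delta_w(x)f(x)$, which you state correctly.

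There is also a variant that avoids splitting off large atoms. Since $B_++B_-=2J$, you can eliminate the balanced value through $u(x)=B_+(x,\lambda)^{-1}\bigl(Ju^-(x)+\tfrac12\Delta_w(x)f(x)\bigr)$. This turns the problem into a strict Volterra--Stieltjes equation for $u^-$, in which only values at $t<x$ enter. The weighted-norm contraction for such equations needs no smallness of atoms.
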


If $q$ and $w$ are finite measures near one of the endpoints $a$ or $b$, that endpoint is called \emph{regular} and otherwise \emph{singular}.
Note that an endpoint may be regular even if it is infinite.

The reason we are asking for balanced solutions is the integration by parts formula which states that
\[\int_{(c,d)} (udv+vdu)=(uv)^-(d)-(uv)^+(c)+(2\kappa-1)\int_{(c,d)}(v^+-v^-)du\]
whenever $u$ and $v$ are functions of bounded variation on the interval $(c,d)$ satisfying $u=\kappa u^++(1-\kappa)u^-$ and $v=\kappa v^++(1-\kappa)v^-$ for some fixed parameter $\kappa$.
Thus, when $\kappa=1/2$, the integration by parts formula takes its classical form.

For the differential equation to hold at the singleton $x$ it is necessary and sufficient that $Ju'(\{x\})+(q(\{x\})-\la w(\{x\}))u(x)=w(\{x\})f(x)$, or, equivalently, that
$$B_+(x,\la)u^+(x)-B_-(x,\la)u^-(x)=\Delta_w(x)f(x).$$
Thus, if $B_+(x,\la)$ is not invertible, we could be in one of the following two situations: (i) it may be impossible to continue a solution given on $(a,x)$ beyond $x$ or (ii) there are infinitely many ways to continue a solution on $(a,x)$ beyond $x$.
An analogous statement holds, of course, if $B_-(x,\la)$ is not invertible.

Since the entries of $Q$ and $W$ are of locally bounded variation, they can have at most countably many (jump) discontinuities.
Points $x$ where both $Q$ and $W$ are continuous, i.e., where $q(\{x\})=w(\{x\})=0$, will be called \emph{points of continuity}.
There are at most countably many points $x$ in $(a,b)$ where $B_\pm(x,\la)$ fail to be invertible.
Since $\det B_-(x,\cdot)$ is a polynomial of degree at most $2$, it has at most two zeros unless it is identically equal to zero.
Setting $\Lambda_x=\{\la\in\bb C: \det B_\pm(x,\la)=0\}$ and $\Lambda=\bigcup_{x\in(a,b)}\Lambda_x$ it follows that $\Lambda$ is either equal to $\bb C$ or else countable.
On any subinterval of $(a,b)$ on which $q$ gives rise to a finite measure we find that $\sum_{k=1}^\infty \norm{\Delta_q(x_k)}$ must be finite, when $k\mapsto x_k$ is a sequence of distinct points in that interval.
It follows that $\Xi_0=\{x\in(a,b): \det B_\pm(x,0)=0\}$ is a discrete set.
One shows similarly that, for any fixed complex number $\la$ the set $\Xi_\la=\{x\in(a,b): \det B_\pm(x,\la)=0\}$ is discrete.
In particular, points in $\Xi_\la$ cannot accumulate at a regular endpoint.
We emphasize, however, that $\Xi=\bigcup_{\la\in\bb C}\Xi_\la$ need not be discrete.

If $B_\pm(x,\lambda)$ are invertible for all $x\in(c,d)$, Theorem \ref{t:ex-un} guarantees, in particular, the existence of a fundamental matrix $U(\cdot,\la)$ for $Ju'+(q-\la w)u=0$ in $(c,d)$.
Using Lemma 3.2 of \cite{MR4047968} we obtain that $U^+(x,\ol)^*JU^+(x,\la)=U^-(x,\ol)^*JU^-(x,\la)$ does not depend on $x\in(c,d)$.
In particular, if $U^+(x_0,\la)=U^+(x_0,\ol)$ is a symplectic\footnote{Recall that a matrix $S\in\bb C^{2\times2}$ is called symplectic, if $S^*JS=J$.
A matrix $S$ is symplectic if and only if $S=\alpha R$ where $\alpha$ is a complex number of modulus $1$ and $R$ is a real matrix with determinant $1$.}
matrix for some point $x_0\in(c,d)$ we obtain the Wronskian relation
\begin{equation}\label{wronski}
U^\pm(x,\ol)^*JU^\pm(x,\la)=J.
\end{equation}

Next we state the variation of constants formula.
\begin{thm}\label{t:vc}
Let $(c,d)\subset(a,b)$ be an interval such that $B_\pm(x,\lambda)$ are invertible for all $x\in(c,d)$, let $x_0$ be a point of continuity in $(c,d)$, and let $u_0\in\bb C^2$.
Suppose $U(\cdot,\la)$ is a fundamental matrix on $(c,d)$ such that $S=U(x_0,\la)=U(x_0,\ol)$ is a symplectic matrix.
If $f\in L^1_\loc(w)$ and the function $u=u^\#$ is given by
\begin{equation}\label{vc1}
u^-(x)=U^-(x,\la)\Bigl(u_0+J^{-1} \int_{[x_0,x)}U(\cdot,\ol)^*wf\Bigr)\quad \text{for $x\geq x_0$}
\end{equation}
and
\begin{equation}\label{vc2}
u^+(x)=U^+(x,\la)\Bigl(u_0-J^{-1} \int_{(x,x_0]}U(\cdot,\ol)^*wf\Bigr)\quad \text{for $x\leq x_0$,}
\end{equation}
then the pair $(u,f)$ satisfies $Ju'+(q-\la w)u=wf$ on $(c,d)$ and $u(x_0)=Su_0$.

Conversely, if $u$ is a balanced solution of $Ju'+(q-\la w)u=wf$ on $(c,d)$ and $u(x_0)=Su_0$, then $u^\pm$ satisfy \eqref{vc1} and \eqref{vc2}.
\end{thm}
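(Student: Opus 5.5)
We prove the variation of constants formula (Theorem \ref{t:vc}) by reducing it to the existence and uniqueness statement of Theorem \ref{t:ex-un}, together with a product rule for functions of locally bounded variation. The plan is to show directly that the $u$ defined by \eqref{vc1}--\eqref{vc2} solves the equation with $u(x_0)=Su_0$. The converse then follows from uniqueness: if $v$ is another balanced solution with $v(x_0)=Su_0$, then $u-v$ is a balanced solution of the homogeneous equation vanishing at $x_0$, so $u-v=0$ by Theorem \ref{t:ex-un}, and $v$ satisfies \eqref{vc1} and \eqref{vc2}.

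\textbf{Checking the initial condition.} Since $x_0$ is a point of continuity, $U$ and the integrals have no jump there. At $x=x_0$ the integral in \eqref{vc1} is over the empty interval $[x_0,x_0)$, so $u^-(x_0)=U^-(x_0,\la)u_0$. Similarly \eqref{vc2} gives $u^+(x_0)=U^+(x_0,\la)u_0$. Hence $u(x_0)=Su_0$. One also checks that the two formulas are consistent, so that $u^\pm$ are the left- and right-continuous versions of a single function $u$ of locally bounded variation. For $x>x_0$, the right limit of \eqref{vc1} is $u^+(x)=U^+(x,\la)\bigl(u_0+J^{-1}\int_{[x_0,x]}U(\cdot,\ol)^*wf\bigr)$, and the analogous statement holds for $x<x_0$.

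\textbf{The equation off the atoms.} Write $u=Uc$ with $c^-(x)=u_0+J^{-1}\int_{[x_0,x)}U(\cdot,\ol)^*wf$, so that $dc=J^{-1}U(\cdot,\ol)^*wf$ as measures. The Wronskian relation \eqref{wronski} gives $U^\pm(x,\ol)^*J U^\pm(x,\la)=J$. Hence $U^\pm(x,\la)J^{-1}U^\pm(x,\ol)^*=J^{-1}$, because a left inverse of a $2\times2$ matrix is also a right inverse. On a set carrying no atoms of $q$ or $w$, the product rule gives
\[J\,d(Uc)=J(dU)c+JU\,dc=-(q-\la w)Uc+JUJ^{-1}U(\cdot,\ol)^*wf=-(q-\la w)u+wf,\]
which is the equation in the continuous parts.

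\textbf{The main obstacle: the atoms.} At an atom $x$ we must verify
\[B_+(x,\la)u^+(x)-B_-(x,\la)u^-(x)=\Delta_w(x)f(x).\]
We know $B_+U^+=B_-U^-$ (the fundamental matrix solves the homogeneous equation) and $c^+-c^-=J^{-1}U(x,\ol)^*\Delta_w f(x)$. The left side therefore reduces to $B_+(x,\la)U^+(x,\la)J^{-1}U(x,\ol)^*\Delta_w(x)f(x)$, where $U=U^\#$. So it suffices to show
\[B_+(x,\la)U^+(x,\la)J^{-1}U^\#(x,\ol)^*=\id.\]
I would obtain this from the jump relation for $U(\cdot,\ol)$. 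Taking adjoints of $B_+(x,\ol)U^+(x,\ol)=B_-(x,\ol)U^-(x,\ol)$ and using $B_\pm(x,\ol)^*=-B_\mp(x,\la)$ yields $U^+(x,\ol)^*B_-(x,\la)=U^-(x,\ol)^*B_+(x,\la)$. Combining this with \eqref{wronski} at $x^\pm$ and the identity $B_++B_-=2J$ should give
\[U^\#(x,\ol)^*B_+(x,\la)U^+(x,\la)=J,\]
which is equivalent to the desired identity. This algebra, together with the bookkeeping of the one-sided limits in the general product rule, is the delicate step; the remaining steps are routine.
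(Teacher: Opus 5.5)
Your proof is correct, but it takes a different route from the paper. The paper does not verify the formula itself. It quotes Lemma 3.3 of \cite{MR4047968} for the normalized case $U(x_0,\cdot)=\id$, and reduces the general case to it by replacing $U(\cdot,\la)$ with $U(\cdot,\la)S^{-1}$ and using $SJ^{-1}S^*=J^{-1}$. You instead verify the formula directly for an arbitrary symplectic $S$; the normalization is already encoded in the Wronskian relation \eqref{wronski}. You split the measure equation into a diffuse part, handled by the product rule and $U^\pm(x,\la)J^{-1}U^\pm(x,\ol)^*=J^{-1}$, and an atomic part. The converse then comes for free from the uniqueness in Theorem \ref{t:ex-un}. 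The step you flag as delicate works exactly as you predict. With $B_\pm=B_\pm(x,\la)$ and the relation $U^-(x,\ol)^*B_+=U^+(x,\ol)^*B_-$,
\begin{align*}
U^\#(x,\ol)^*B_+U^+(x,\la)&=\tfrac12\bigl(U^+(x,\ol)^*B_++U^-(x,\ol)^*B_+\bigr)U^+(x,\la)\\
&=\tfrac12U^+(x,\ol)^*(B_++B_-)U^+(x,\la)=U^+(x,\ol)^*JU^+(x,\la)=J.
\end{align*}
The paper's argument is shorter but rests entirely on the cited lemma. Yours is self-contained apart from \eqref{wronski} and Theorem \ref{t:ex-un}, at the price of some bookkeeping with one-sided limits. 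Note that $u=u^\#$ differs from $U^\#c^\#$ (with your coefficient vector $c$) at common atoms of $U$ and $c$; this is precisely why the atoms must be treated separately, as you do. Your route also shows why the balanced value $U^\#(\cdot,\ol)$ is the correct one to use in the integrand.
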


\begin{proof}
When $S=U(x_0,\cdot)=\id$, this is proved in Lemma 3.3 of \cite{MR4047968}.
The more general case follows by using that $U(\cdot,\la)S^{-1}$ is a fundamental matrix satisfying the hypothesis of that Lemma and noticing that $SJ^{-1}S^*=J^{-1}$.
\end{proof}

\subsection{Linear relations}\label{S:2.2}
We define the set $\mc L^2(w)$ as the set of all $\bb C^2$-valued functions $f$ on $(a,b)$ such that $\int f^*wf$ makes sense and is finite.
We emphasize that $\mc L^2(w)$ is a complete semi-inner product space with semi-inner product $\scp{f,g}=\int f^*wg$ but, in general, not a Hilbert space.
The corresponding Hilbert space of classes of functions which are equal almost everywhere with respect to $w$ will be denoted by $L^2(w)$.
Observe that $L^2(w)$ is a subspace of $L^1_\loc(w)$.

We may now define the linear relations
$$\mc T_\mx=\{(u,f)\in \mc L^2(w)\times \mc L^2(w): u\in\BVl^\#((a,b))^2, Ju'+qu=wf\}$$
and
$$\mc T_\mn=\{(u,f)\in \mc T_\mx: \text{$\supp u$ is compact in $(a,b)$}\}.$$

Given two representatives $f$ and $\tilde f$ of some element in $L^2(w)$ we have $wf=w\tilde f$.
Therefore there is no need to distinguish between representatives of the same class as far as the right-hand side of the equation is concerned.
However, two balanced representatives $u$ and $\tilde u$ of $[u]$, both of locally bounded variation, may not satisfy $Ju'+qu=J\tilde u'+q\tilde u$ and this is the reason for our emphasis on the distinction between $\mc L^2(w)$ and $L^2(w)$.
For instance, if $q=0$, $w=\id \delta_0$, and $u(x)=-u(-x)=(\alpha,\beta)^\top$ for $x>0$, then $[u]=0$ but $Ju'+qu=(-2\beta,2\alpha)^\top\neq0$, in general.

Restricting our attention to the Hilbert space $L^2(w)\times L^2(w)$ we define now the relation
$$T_\mx=\{([u],[f])\in L^2(w)\times L^2(w):(u,f)\in\mc T_\mx\}$$
which is called the maximal relation associated with $Ju'+qu=wf$ and the minimal relation
$$T_\mn=\{([u],[f])\in L^2(w)\times L^2(w):(u,f)\in\mc T_\mn\}.$$
Here (and elsewhere) we choose brevity over precision: whenever we have a pair $([u],[f])$ in $T_\mx$ or $T_\mn$ we assume that $u$ is a balanced function of locally bounded variation.

Recall that the adjoint $S^*$ of a relation $S\subset L^2(w)\times L^2(w)$ is defined by
$$S^*=\{(v,g)\in L^2(w)\times L^2(w): \forall (u,f)\in S: \scp{g,u}=\scp{v,f}\}.$$
It was shown in \cite{MR4298818} that $T_\mn^*=T_\mx$ which is, of course, a cornerstone of spectral theory.
In particular, it shows that $T_\mn$ is a symmetric relation.
Let $\mc D_{\la}=\{(u,\la u)\in\mc T_\mx\}$ and $D_{\la}=\{(u,\la u)\in T_\mx\}$.
Then von Neumann's relation
\begin{equation}\label{vNeumann}
T_\mx=\ov{T_\mn}\oplus D_\iu\oplus D_{-\iu}
\end{equation}
holds true.
When $\la$ is not real, the spaces $D_{\la}$ are called deficiency spaces.
Their dimensions are independent of $\la$ as long as $\la$ varies in either the upper or else the lower half plane.
One therefore defines the deficiency indices $n_\pm=\dim D_{\pm\iu}$.

We will make frequent use of Lagrange's identity and therefore provide the following reminder of its substance.
If $(v,g)$ and $(u,f)$ are in $\mc T_\mx$, then $v^*wf-g^*wu=v^*Ju'+v^{\prime*}Ju$ is a finite measure on $(a,b)$.
Integration by parts gives that
\begin{equation}\label{pLagrange}
(v^*Ju)^-(d)-(v^*Ju)^+(c)=\int_{(c, d)}(v^*Ju'+v^{\prime*}Ju)=\int_{(c, d)}(v^*wf-g^*wu)
\end{equation}
whenever $[c,d]\subset(a,b)$.
This implies that the limits $(v^*Ju)^-(b)$ and $(v^*Ju)^+(a)$ exist independently of each other.
We emphasize that these limits may change when $(v,g)$ or $(u,f)$ are replaced by different balanced representatives in their classes.
However their difference does not, since
\begin{equation}\label{Lagrange}
(v^*Ju)^-(b)-(v^*Ju)^+(a)=\<v,f\>-\<g,u\>
\end{equation}
where the right-hand side depends only on the respective classes but not the representatives.
Either of the equations \eqref{pLagrange} and \eqref{Lagrange} is called Green's formula or Lagrange's identity.

Since $\ov u$ satisfies $J\ov u'+q\ov u=\ol w\ov u$ if and only if $u$ satisfies $Ju'+qu=\la wu$, the deficiency indices $n_+$ and $n_-$ coincide and the existence of self-adjoint restrictions of $T_\mx$ is therefore guaranteed.
In \cite{MR4431055} it was shown that the deficiency indices $n_\pm$ do not exceed $2$ and that the self-adjoint restrictions of $T_\mx$ are given by boundary conditions.
Specifically, if $(v_1,g_1), ..., (v_{n_\pm},g_{n_\pm})$ are linearly independent elements of $D_\iu\oplus D_{-\iu}$, define the $n_\pm\times n_\pm$-matrix $G$ whose entries are
$$G_{\ell,k}=\<(g_\ell,-v_\ell),(v_k,g_k)\>=(g_\ell^*J g_k)^-(b)-(g_\ell^*J g_k)^+(a).$$
If $G=0$, then
\begin{equation}\label{bc}
T=\{(u,f)\in T_\mx: (g_j^*Ju)^-(b)-(g_j^*Ju)^+(a)=0, j=1,...,n_\pm\}
\end{equation}
is a self-adjoint restriction of $T_\mx$.
Conversely, given a self-adjoint restriction $T$ of $T_\mx$, there are $n_\pm$ linearly independent elements $(v_j,g_j)$ of $D_\iu\oplus D_{-\iu}$, such that the corresponding matrix $G$ equals $0$ and $T$ is given as in equation \eqref{bc}.
Since $(v,g)$ in $D_\iu\oplus D_{-\iu}$ implies that $(g,-v)$ is also in $D_\iu\oplus D_{-\iu}$, the condition $G=0$ shows that $(g_j,-v_j)\in T$ for $j=1,..., n_\pm$, i.e.,
$$T=\ov T_\mn\oplus\operatorname{span}\{(g_1,-v_1), ..., (g_{n_\pm},-v_{n_\pm})\}.$$

Since $\<(v_j,g_j),(v_j,g_j)\>\neq0$ it is impossible that both $(g_j^*Ju)^+(a)$ and $(g_j^*Ju)^-(b)$ vanish for all $(u,f)\in \mc T_\mx$.
We have therefore three mutually exclusive cases: (1) $(g_j^*Ju)^+(a)=0$ for all $(u,f)\in \mc T_\mx$, (2) $(g_j^*Ju)^-(b)=0$ for all $(u,f)\in \mc T_\mx$, and (3) for some $(u,f)\in \mc T_\mx$ both $(g_j^*Ju)^+(a)$ and $(g_j^*Ju)^-(b)$ are different from zero.%
\footnote{This classification is for the function $g_j$, not its class. If $u_0\in\mc L_0$, then $g_j$ and $g_j+u_0$ describe the same boundary. The former could be in case (1) while the latter could be in case (3).}
Accordingly there are three kinds of boundary conditions.
We have separated boundary conditions, if we have case (1) or (2) for every $j\in\{1,...,n_\pm\}$; we have coupled boundary conditions, if we have case (3) for every $j\in\{1,...,n_\pm\}$; and we have mixed boundary conditions otherwise.
We will see later that in the present circumstances mixed boundary conditions do not occur.

In the main part of the paper we will occasionally make use of the following observation.
If $\Lambda$ is not the full plane it is a countable set and therefore there is a real number $\mu$ which is not in $\Lambda$.
We define $\tilde q=q-\mu w$ and $\tilde w=w$ and the corresponding relations $\tilde T_\mx$, etc.
Clearly we have $\tilde{\mc T}_\mx=\mc T_\mx-\mu\id$, i.e.,
$$(u,f)\in \mc T_\mx \Leftrightarrow (u,f-\mu u)\in \tilde{\mc T}_\mx.$$
Hence analogous relations hold for $\mc T_\mn$, $T_\mx$ and $T_\mn$.
But how about $\ov{T_\mn}$?
Suppose $(u,f)\in\ov{T_\mn}=T_\mx^*$ and $(v,g)\in \tilde{T}_\mx$.
Then $(v,g+\mu v)\in T_\mx$ and, since $\mu\in\bb R$,
$$\scp{v,f-\mu u}-\scp{g,u}=\scp{v,f}-\scp{g+\mu v,u}=0.$$
But this means that $(u,f-\mu u)\in \tilde T_\mx^*=\ov{\tilde T_\mn}$.
We may show similarly the other direction so that we have
\begin{equation}\label{20230425.1}
(u,f)\in \ov{T_\mn} \Leftrightarrow (u,f-\mu u)\in \ov{\tilde T_\mn}.
\end{equation}

\subsection{Definiteness and the lack thereof}\label{mcl0}
Define the space $\mc L_0$ of all solutions of $Ju'+qu=0$ with norm $0$, i.e.,
$$\mc L_0=\{u\in\BVl^\#((a,b))^2: Ju'+qu=0, \norm u=0\}.$$
Note that $\norm u=0$ if and only if $wu=0$.
Therefore, if $u\in\mc L_0$, we get $Ju'+qu=\la wu$ for all $\la\in\bb C$.
This implies that no non-trivial element of $\mc L_0$ can be compactly supported unless $\Lambda=\bb C$.

If $u_0\in\mc L_0$ and $(u,f)\in \mc T_\mx$, then $(u+u_0,f)$ is also in $\mc T_\mx$ and $([u],[f])$ and $([u+u_0],[f])$ are the same in $T_\mx$.
Conversely, if $(u,f)$ and $(\tilde u,\tilde f)$ are both representatives of $([u],[f])$, then $u-\tilde u\in\mc L_0$.
If $\mc L_0$ is trivial we call the problem posed by $Ju'+qu=wf$ \emph{definite} and otherwise \emph{non-definite}.

In the definite case the class $[u]$ in the pair $([u],[f])\in T_\mx$ has a unique representative in $\BVl^\#((a,b))^2$ satisfying $Ju'+qu=wf$.
In that case we can identify $T_\mx$ with $\mc T_\mx$, $T_\mn$ with $\mc T_\mn$, etc.

If $\Lambda=\bb C$, an example given in Section 7 of \cite{MR4431055}, shows that $\mc L_0$ may be infinite-dimensional.
However, if $\Lambda\neq\bb C$ and $\la\in\bb C\setminus\Lambda$, the space of all solutions of $Ju'+qu=\la wu$ is two-dimensional and $\mc L_0$ is at most two-dimensional.
If $\Lambda\neq\bb C$ and $\dim\mc L_0=2$, we must have $w=0$, a case which we may ignore since $L^2(w)$ is then trivial.
We also have the following observation.
\begin{lem}\label{L:2.4}
If $\Lambda\neq \bb C$ and $\mc L_0\neq\{0\}$, then $\Lambda\subset\bb R$.
\end{lem}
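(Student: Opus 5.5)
The plan is to compute $\det B_\pm(x,\la)$ explicitly as a polynomial in $\la$. I will show that a nontrivial element of $\mc L_0$ forces $\det\Delta_w(x)=0$ at every $x$. Then $\det B_\pm(x,\cdot)$ is a polynomial of degree at most one with real coefficients, so its zeros are real.

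\emph{Step 1 (the determinant).} Fix $x\in(a,b)$ and set $A=\frac12\Delta_q(x)$ and $C=\frac12\Delta_w(x)$. Both are real symmetric $2\times2$ matrices, since the entries of $q$ and $w$ are real distributions and $q$, $w$ are hermitian. A direct computation with $J=\sm{0&-1\\1&0}$ gives
\[\det B_-(x,\la)=\det(J-A+\la C)=\det(\la C-A)+1,\]
and the same formula holds for $B_+(x,\la)=-B_-(x,\ol)^*$ up to replacing $\la$ by $\ol$ and conjugating. Expanding, this is
\[\la^2\det C-\la\,(c_{11}a_{22}+c_{22}a_{11}-2a_{12}c_{12})+\det A+1,\]
a polynomial with real coefficients. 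Since $\Lambda\neq\bb C$, it is not identically zero.

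\emph{Step 2 ($u_0$ never vanishes).} Let $u_0\in\mc L_0$ be nonzero. Because $\Lambda\ne\bb C$ is countable, pick a real $\mu\notin\Lambda$. Then $B_\pm(x,\mu)$ is invertible for every $x$, and $u_0$ solves $Ju'+(q-\mu w)u=0$. By the uniqueness part of Theorem~\ref{t:ex-un}, neither $u_0^+(x)$ nor $u_0^-(x)$ can vanish at any $x$, since otherwise $u_0\equiv0$.

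The balanced value $u_0(x)$ is also nonzero. If $u_0(x)=0$, then $u_0^+(x)=-u_0^-(x)$. The jump condition $B_+(x,\mu)u_0^+(x)=B_-(x,\mu)u_0^-(x)$ would then give $(B_++B_-)u_0^-(x)=2Ju_0^-(x)=0$, so $u_0^-(x)=0$, a contradiction.

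\emph{Step 3 ($\det C=0$).} Since $\norm{u_0}=0$ we have $wu_0=0$ as a measure. Evaluating at the singleton $\{x\}$ gives $\Delta_w(x)u_0(x)=0$. With $u_0(x)\neq0$ this shows $\det C=0$ for every $x$. Hence the polynomial in Step 1 has degree at most one, real coefficients, and is not identically zero. Its zero set $\Lambda_x$ is therefore empty or a single real number. Taking the union over $x$ gives $\Lambda\subset\bb R$.

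The main point needing care is Step 2: showing that the balanced value $u_0(x)$, not merely $u_0^\pm(x)$, is nonzero. The argument above uses only Proposition~\ref{P:2.1}(2)-type reasoning together with the invertibility of $B_\pm(x,\mu)$ for real $\mu\notin\Lambda$. Everything else is routine algebra.
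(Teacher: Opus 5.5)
Your proof is correct and is essentially the paper's argument run directly rather than by contraposition. The paper assumes a non-real $\lambda\in\Lambda_x$, gets the two zeros $\lambda,\ol$ of $\det B_+(x,\cdot)$, concludes that $\Delta_w(x)$ is positive definite, and then notes that a nowhere-vanishing solution for $\mu\notin\Lambda$ must have positive norm; you instead use a nowhere-vanishing $u_0\in\mc L_0$ to force $\det\Delta_w(x)=0$, so the real-coefficient determinant polynomial has degree at most one. (Your detour through $u_0^\pm(x)$ in Step 2 is unnecessary, since Theorem~\ref{t:ex-un} poses the initial condition on the balanced value $u(x_0)$ itself, so uniqueness gives $u_0(x)\neq0$ directly.)
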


\begin{proof}
By way of contraposition, we assume there are non-real elements $\la$ and $\ol$ of $\Lambda$ and $\Lambda\neq\bb C$.
Then there is an $x\in(a,b)$ such that $\det B_+(x,\cdot)$ has the zeros $\la$ and $\ol$.
This implies that $\Delta_w(x)$ is positive definite.
If $\mu$ is not in $\Lambda$, then $Ju'+qu=\mu wu$ has a two-dimensional space of solutions and any non-trivial solution $u$ of this equation vanishes nowhere.
Hence $\norm u\geq u(x)^*\Delta_w(x)u(x)>0$, i.e., only the trivial solution of $Ju'+qu=\mu wu$ has norm $0$.
Thus $\mc L_0$ is trivial.
\end{proof}

Suppose $\la\in\bb C\setminus\Lambda$, $x_0\in(a,b)$ is a point of continuity, and $U(\cdot,\la)$ is a fundamental matrix for $Ju'+qu=\la wu$ satisfying $U(x_0,\la)=\id$.
Define
$$\mathbf{B}(\la)=\left\{\int U(\cdot,\ol)^*wf: \text{$f\in L^2(w)$, $\supp f$ compact in $(a,b)$}\right\}.$$
Suppose $z\in\bb C^2$ is orthogonal to $\mathbf{B}(\la)$, i.e., $\int f^*w U(\cdot,\ol)z=0$ for all compactly supported functions $f\in L^2(w)$.
Then $w U(\cdot,\ol)z$ is the zero measure so that $\|U(\cdot,\ol)z\|=0$.
Hence $u_0=U(\cdot,\ol)z\in\mc L_0$.
Since $u_0$ is a solution of $Ju'+qu=\la wu_0$ for any $\la\in\bb C$ it follows that $z=u_0(x_0)$ is independent of $\la$.
Thus $\mathbf{B}$ is also independent of $\la\in\bb C\setminus\Lambda$ and we will disregard its argument $\la$ in the following.
We emphasize, however, that $\mathbf{B}$ depends on the choice of the fundamental matrix $U(\cdot,\la)$.
Since $\bb C^2$ is finite-dimensional we may find, at least for a fixed $\la\in\bb C\setminus\Lambda$, an interval $[c,d]$, whose endpoints are points of continuity, such that $\mathbf{B} =\{\int U(\cdot,\ol)^*wh: h\in L^2(w), \supp{h}\subset[c,d]\}$.

\begin{lem}\label{L:8.4mod1}
Suppose $\dim\mc L_0=0$ and $\la\in\bb C\setminus\Lambda$.
If $u$ satisfies $Ju'+qu=w(\la u+f)$ and has finite norm near $a$, then there is an element $(v,\la v+g)\in \mc T_\mx$ such that $v=u$ near $a$ and $v=0$ near $b$.
A similar statement holds with the roles played by $a$ and $b$ reversed.
\end{lem}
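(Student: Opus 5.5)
We construct $v$ by hand: keep $v=u$ near $a$, set $v=0$ near $b$, and bridge the two on a compact interval $[c,d]$ chosen so that $\mathbf B$ is spanned by integrals over $[c,d]$. Concretely, with $\la\notin\Lambda$ fixed, pick a point of continuity $x_0$ and a fundamental matrix $U(\cdot,\la)$ with $U(x_0,\la)=\id$. Then choose $[c,d]$ with endpoints points of continuity such that
$$\mathbf B=\Bigl\{\int U(\cdot,\ol)^*wh: h\in L^2(w),\ \supp h\subset[c,d]\Bigr\}.$$
Since $\dim\mc L_0=0$, the argument preceding the lemma shows $\mathbf B^\perp=\{0\}$, so $\mathbf B=\bb C^2$. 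This is the only place definiteness enters, and it is the key point.

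The bridge is built with the variation of constants formula, Theorem~\ref{t:vc}, on all of $(a,b)$; this is legitimate because $\la\notin\Lambda$, so $B_\pm(x,\la)$ are invertible everywhere. Define
$$g=\begin{cases} f & \text{on } (a,c),\\ h & \text{on } [c,d],\\ 0 & \text{on } (d,b),\end{cases}$$
with $h$ supported in $[c,d]$ still to be chosen. Let $v$ be the balanced solution of $Jv'+(q-\la w)v=wg$ that agrees with $u$ on $(a,c)$. It exists and is unique: take the solution through the initial value $u(x_0)$ after moving $x_0$ below $c$, or equivalently use the uniqueness part of Theorem~\ref{t:vc}. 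For $x>d$, formula \eqref{vc1} gives
$$v^-(x)=U^-(x,\la)\Bigl(\tilde u_0+J^{-1}\int_{[c,d]}U(\cdot,\ol)^*wh\Bigr),$$
where $\tilde u_0$ is determined by $u$ and $f$ up to $c$. Because $\mathbf B=\bb C^2$, we can pick $h\in L^2(w)$ supported in $[c,d]$ with $\int_{[c,d]}U(\cdot,\ol)^*wh=J\tilde u_0$. The bracket then vanishes, so $v=0$ on $(d,b)$; balance follows since $d$ is a point of continuity.

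It remains to check $(v,\la v+g)\in\mc T_\mx$. Near $a$, $v=u$ has finite norm by hypothesis. On $[c,d]$, $v$ is of bounded variation, hence bounded, and $w$ is finite on the compact interval, so the norm there is finite. Near $b$, $v=0$. Thus $v\in\mc L^2(w)$. As for $g$, it equals $f$ near $a$, which lies in $\mc L^2(w)$ near $a$ since $f$ does in the intended setting $(u,\la u+f)$ with $f\in L^2(w)$; it equals $h\in L^2(w)$ on $[c,d]$ and $0$ beyond. Therefore $\la v+g\in\mc L^2(w)$, and $Jv'+qv=w(\la v+g)$ holds by construction.

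The main obstacle is the bookkeeping at the junction points $c$ and $d$: jump conditions must not spoil the splice, and the sign and normalization convention in Theorem~\ref{t:vc} must be matched so that the right vector lies in $\mathbf B$. Choosing $c$ and $d$ as points of continuity handles the first issue, and $\mathbf B=\bb C^2$ handles the second. The statement with $a$ and $b$ reversed follows symmetrically from \eqref{vc2}.
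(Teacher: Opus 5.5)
Your proof is correct and is essentially the paper's argument: the paper also keeps $g=f$ on $(a,c)$ and uses $\mathbf B=\bb C^2$ to pick $h$ supported in $[c,d]$ with $\int U(\cdot,\ol)^*wh=-U(c,\ol)^*Ju(c)$. It then concludes $v=0$ past $d$ by applying Lagrange's identity \eqref{pLagrange} to $U(\cdot,\ol)^*Jv$, where you use the variation of constants formula, which amounts to the same computation. One small slip: since $J^{-1}=-J$, the bracket vanishes when $\int_{[c,d]}U(\cdot,\ol)^*wh=-J\tilde u_0$, not $J\tilde u_0$; this is harmless because $\mathbf B=\bb C^2$ contains either vector.
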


\begin{proof}
Since $\dim\mc L_0=0$ we have $\mathbf B=\bb C^2$ and hence there is an $h\in L^2(w)$ with support in $[c,d]$ such that $\int U(\cdot,\ol)^*wh =-U(c,\ol)^*Ju(c)$.
Now let $g=f$ on $(a,c)$ and $g=h$ on $(c,b)$.
Define $v$ to be the unique solution of the initial value problem $Jv'+qv=w(\la v+g)$, $v(c)=u(c)$.
Then $v=u$ on $(a,c)$ and, when $x\geq d$,
$$(U(\cdot,\ol)^*Jv)^-(x)=(U(\cdot,\ol)^*Jv)^+(c)+\int_{(c,d)} U(\cdot,\ol)^*w h=0$$
according to Lagrange's identity \eqref{pLagrange}. Thus $v=0$ on $(d,b)$.
\end{proof}

We now have a closer look at the case when $\Lambda\neq\bb C$ (and hence countable) and $\mc L_0$ is one-dimensional.
It follows that $\mathbf{B}$ is then also one-dimensional.
Since, for real $\mu$ not in $\Lambda$, the entries of the fundamental matrix $U(\cdot,\mu)$ are real-valued, the space $\mathbf{B}$ is spanned by a real unit vector $e\in\bb C^2$.
We also define $e^\perp=Je$ which is perpendicular to $e$.

Suppose $g\in L^2(w)$ and $v$ satisfies $Jv'+qv=w(\la v+g)$.
We may write $v=U(\cdot,\la)(\alpha e+\beta e^\perp)$ for suitable functions $\alpha$ and $\beta$, since the columns of $U(x,\la)$ are linearly independent for every $x\in(a,b)$.
According to equation \eqref{wronski} we have $U(\cdot,\ol)^*Jv=\alpha e^\perp-\beta e$.
If $s,t$ are points of continuity and $s\leq t$ Lagrange's identity \eqref{pLagrange} gives therefore
\begin{multline*}
(\alpha(t)-\alpha(s)) e^\perp-(\beta(t)-\beta(s)) e=U(t,\ol)^*Jv(t)-U(s,\ol)^*Jv(s)\\
 =\int_{(s,t)} U(\cdot,\ol)^*w g.
\end{multline*}
The right-hand side is an element of $\mathbf B$ and hence a multiple of $e$.
This implies that, at points $x$ of continuity, $\alpha$ is constant and that
\begin{equation}\label{20230424.1}
\beta(x)=\begin{cases} \beta(x_0)+e^*\int_{(x,x_0)} U(\cdot,\ol)^*wg&\text{if $x\leq x_0$},\\
                       \beta(x_0)-e^*\int_{(x_0,x)} U(\cdot,\ol)^*wg&\text{if $x\geq x_0$}.\end{cases}
\end{equation}
We remark that $\alpha$ and $\beta$ may not be balanced.
In particular, $\alpha(x)$ may indeed be different from $\alpha(x_0)$ if $\Delta_q(x)$ or $\Delta_w(x)$ does not vanish.

\begin{lem}\label{L:8.4mod2}
Suppose $\dim\mc L_0=1$ and $\la\in\bb C\setminus\Lambda$.
If $u$ satisfies $Ju'+qu=w(\la u+f)$ and has finite norm near $a$, then there is an element $(v,\la v+g)\in \mc T_\mx$ such that $v$ equals $u$ near $a$ and is a constant multiple of $U(\cdot,\la)e$ near $b$.
A similar statement holds with the roles played by $a$ and $b$ reversed.
\end{lem}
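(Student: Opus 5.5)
We mimic the proof of Lemma \ref{L:8.4mod1}, with the space $\mathbf B$ now one-dimensional and spanned by the real unit vector $e$. Fix the interval $[c,d]$, with endpoints points of continuity, such that $\mathbf B=\{\int U(\cdot,\ol)^*wh: \supp h\subset[c,d]\}$. Write $u=U(\cdot,\la)(\alpha_u e+\beta_u e^\perp)$. Here $\alpha_u$ is constant at points of continuity, and $\beta_u$ is given by \eqref{20230424.1}.

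First choose $h\in L^2(w)$ supported in $[c,d]$ with $e^*\int_{(c,d)}U(\cdot,\ol)^*wh=\beta_u(c)$. This is possible because $\int U(\cdot,\ol)^*wh$ ranges over all of $\mathbf B=\operatorname{span}\{e\}$ and $e^*e=1$. Put $g=f$ on $(a,c)$ and $g=h$ on $(c,b)$; then $g\in L^2(w)$, since $f$ has finite norm near $a$ (or after shrinking $c$ toward $a$ if needed). Let $v$ be the unique balanced solution of $Jv'+qv=w(\la v+g)$ with $v(c)=u(c)$, given by Theorem \ref{t:ex-un}. The matrices $B_\pm(x,\la)$ are invertible everywhere because $\la\notin\Lambda$, so the solution exists on all of $(a,b)$. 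By uniqueness, $v=u$ on $(a,c]$.

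Next write $v=U(\cdot,\la)(\alpha e+\beta e^\perp)$. Since $c$ is a point of continuity, $\alpha(c)=\alpha_u(c)$ and $\beta(c)=\beta_u(c)$. The formula \eqref{20230424.1}, applied with base point $c$ and with $g$ equal to $h$ to the right of $c$, gives at points of continuity $x\ge d$
$$\beta(x)=\beta(c)-e^*\int_{(c,x)}U(\cdot,\ol)^*wh=\beta_u(c)-\beta_u(c)=0,$$
because $\supp h\subset[c,d]$. For $\alpha$ we have $\alpha=\alpha(c)$ at all points of continuity. Hence $v=\alpha(c)U(\cdot,\la)e$ at all points of continuity in $(d,b)$.

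The main obstacle is extending this from points of continuity to all points of $(d,b)$. At an atom $x$ the functions $\alpha$ and $\beta$ are not balanced and may jump. But $v$ and $\alpha(c)U(\cdot,\la)e$ both solve the homogeneous equation $Jy'+qy=\la wy$ on $(d,b)$, since $g=h=0$ there. Both are balanced, and they agree at one point of continuity. Uniqueness in Theorem \ref{t:ex-un} therefore shows they coincide on all of $(d,b)$.

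Finally we check that $(v,\la v+g)\in\mc T_\mx$. On $(a,c)$ the function $v$ equals $u$, which has finite norm near $a$. On $[c,d]$ it is bounded and $w$ is finite there. Near $b$ it equals $\alpha(c)U(\cdot,\la)e$, and this has finite norm near $b$. Indeed, $U(\cdot,\mu)e$ for real $\mu\notin\Lambda$ spans the space of $\mc L_0$-elements up to normalization: the vector $z=e$ is orthogonal to nothing, and the earlier argument shows $U(\cdot,\ol)z\in\mc L_0$ exactly for $z\perp\mathbf B$, i.e.\ for $z$ parallel to $e^\perp$.

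This last point is delicate. I would instead first aim for $\alpha(c)=0$, but $\alpha$ cannot be adjusted by $h$. So I expect to argue that $w\,U(\cdot,\la)e$ is finite near $b$ using $U(\cdot,\ol)^*wU(\cdot,\la)$ restricted to the direction $e$. Alternatively, I would interpret the lemma as not requiring integrability near $b$ beyond what $U(\cdot,\la)e$ already has, since the lemma only asserts the form near $b$. Checking this integrability, or confirming that it is not needed, is the step I would verify carefully. The case with $a$ and $b$ interchanged is symmetric, using the $x\le x_0$ branch of \eqref{20230424.1}.
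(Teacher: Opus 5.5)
Your construction is the paper's proof: $g=f$ on $(a,c)$, $g=h$ on $(c,b)$ with $\supp h\subset[c,d]$ and $e^*\int_{(c,d)}U(\cdot,\ol)^*wh=\beta(c)$, solve the initial value problem at $c$, and conclude $v=\alpha U(\cdot,\la)e$ on $(d,b)$. Your uniqueness argument for passing from points of continuity to all of $(d,b)$ is fine.

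The step you leave open, $v\in\mc L^2(w)$ near $b$, is a real gap, and your attempted justification of it is wrong. By the argument preceding Lemma~\ref{L:8.4mod1}, $U(\cdot,\ol)z\in\mc L_0$ precisely when $z\perp\mathbf B$. So $\mc L_0$ is spanned by $U(\cdot,\la)e^\perp$, not by $U(\cdot,\la)e$. Nor can the choice of $h$ help, since $\alpha$ is the constant $e$-coefficient of $u$ itself. The paper's proof also stops at $v=\alpha U(\cdot,\la)e$ on $(d,b)$ without checking this point.

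In fact the gap cannot be closed, because the statement as written is false. Take $(a,b)=(0,\infty)$, $q=0$ and $w=\sm{1&0\\ 0&0}dx$. Then:
\begin{itemize}
\item $\Lambda=\emptyset$, $\mc L_0=\operatorname{span}\{(0,1)^\top\}$ and $e=(1,0)^\top$;
\item the second component of $Jv'=w(\la v+g)$ reads $v_1'=0$;
\item $u=(1,-\la x)^\top$ solves $Ju'=\la wu$ and has finite norm near $0$;
\item any solution $v$ with $v=u$ near $0$ therefore has $v_1\equiv1$, hence $\norm{v}=\infty$.
\end{itemize}
So no element $(v,\la v+g)\in\mc T_\mx$ as claimed exists. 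Neither of your fallback options works: integrability fails in general, and membership in $\mc T_\mx$ requires $v\in\mc L^2(w)$.

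What your argument (and the paper's) does prove is the lemma under an extra hypothesis: $\alpha=0$, or $U(\cdot,\la)e$ has finite norm near $b$ (as when $b$ is limit-circle). Without such a hypothesis you only get a solution $v$ of $Jv'+qv=w(\la v+g)$ with $g\in L^2(w)$ that equals $u$ near $a$ and $\alpha U(\cdot,\la)e$ near $b$.
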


\begin{proof}
Suppose $g=f$ on $(a,c)$ and $g=h$ (to be determined presently) on $(c,b)$.
Let $v$ be the solution of the initial value problem $Jv'+qv=w(\la v+g)$, $v(c)=u(c)$.
Then $v=u$ on $(a,c)$.
If $h\in L^2(w)$ has support in $[c,d]$ and $x\geq d$ is a point of continuity, then
$$U(x,\ol)^*Jv(x)=\alpha e^\perp-\beta(c) e +\int_{(c,d)} U(\cdot,\ol)^*w h$$
according to Lagrange's identity \eqref{pLagrange}.
Now choose $h$ so that $e^*\int_{(c,d)} U(\cdot,\ol)^*w h=\beta(c)$.
Then  $U(x,\ol)^*Jv(x)=\alpha e^\perp$.
Since, by equation \eqref{wronski}, $-U(x,\la)J$ is the inverse of $U(x,\ol)^*J$ and $-J^2=\id$, this implies $v(x)=\alpha U(x,\la)e$ on $(d,b)$.
\end{proof}

We prove now some basic facts about $\ov{T_\mn}$.
\begin{lem}\label{L:2.3}
Suppose $\dim\mc L_0=1$ and $\Lambda\neq\bb C$.
If $\mu\in\bb R\setminus\Lambda$, $([u],[\mu u+f])\in\ov{T_\mn}$, and $x$ is a point of continuity, then $u(x)=\beta(x)U(x,\mu)e^\perp$ where $\beta$ is given by \eqref{20230424.1} but with $\la=\mu$ and $g=f$.
Moreover, the limits
\begin{equation}\label{20230416.1}
\lim_{x\uparrow b}e^*\int_{[x_0,x)} U(\cdot,\mu)^*wf \quad\text{and}\quad \lim_{x\downarrow a}e^*\int_{(x,x_0]} U(\cdot,\mu)^*wf
\end{equation}
both exist.
Finally, $\int U(\cdot,\mu)^*wf=0$ unless $e^*JU(x_0,\mu)^*Jv(x_0)=0$ for all $(v,\mu v+g)\in\mc T_\mx$.
\end{lem}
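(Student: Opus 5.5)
The plan is to exploit $\ov{T_\mn}=T_\mx^*$ together with Lagrange's identity \eqref{Lagrange}. By \eqref{20230425.1}, replacing $q$ by $q-\mu w$, I will assume $\mu=0$ throughout, so $0\notin\Lambda$ and $U(\cdot,0)$ is real. The first step is to identify $\mc L_0$. A vector $z$ is orthogonal to $\mathbf B=\operatorname{span}\{e\}$ exactly when $U(\cdot,\mu)z\in\mc L_0$, so $\mc L_0$ is spanned by $U(\cdot,\mu)e^\perp$. Next, since $u$ solves $Ju'+qu=w(\mu u+f)$ (it is a representative of a pair in $T_\mx$), I write $u=U(\cdot,\mu)(\alpha e+\beta e^\perp)$. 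The computation preceding \eqref{20230424.1} then shows that, at points of continuity, $\alpha$ is a constant and $\beta$ is given by \eqref{20230424.1} with $g=f$. The first claim therefore reduces to showing $\alpha=0$.

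To get $\alpha=0$ I will test $(u,\mu u+f)$ against suitable elements $(v,\mu v+g)\in\mc T_\mx$. By \eqref{wronski}, $U(x,\mu)^*JU(x,\mu)=J$ at points of continuity, so for $v=U(\cdot,\mu)(\gamma e+\delta e^\perp)$ the boundary form is the explicit bilinear expression
$$v^*Ju=(\gamma e+\delta e^\perp)^*J(\alpha e+\beta e^\perp)=\ov\delta\alpha-\ov\gamma\beta$$
(using $Je=e^\perp$, $Je^\perp=-e$, up to sign conventions). The natural first choice $v=U(\cdot,\mu)e^\perp$ lies in $\mc L_0$, so its class is $0$. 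The identity \eqref{Lagrange} then only gives $\alpha-\alpha=0$, which is no information. I therefore need elements of $\mc T_\mx$ that look like $U(\cdot,\mu)e^\perp$ near one endpoint but have a different shape near the other. This is where Lemma \ref{L:8.4mod2} enters, applied with the roles of $a$ and $b$ reversed, starting from $U(\cdot,\mu)e^\perp$, which has zero and hence finite norm near $b$. It produces $v$ equal to $U(\cdot,\mu)e^\perp$ near $b$ and equal to a constant multiple $c\,U(\cdot,\mu)e$ near $a$. Plugging this into \eqref{Lagrange} gives the relation $\alpha=-c\,\beta(a^+)$, read along points of continuity. Doing the same with the roles of $a$ and $b$ swapped gives an analogous relation involving $\beta(b^-)$.

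So the key steps run in order. First, use the Lagrange pairings to show that the one-sided limits of $\beta$ along points of continuity exist. By \eqref{20230424.1}, this is equivalent to the existence of the two limits \eqref{20230416.1}. I expect this to follow from the fact that $v^*Ju$ has limits at both ends for every $v$ from $\mc T_\mx$, combined with the explicit form of $v^*Ju$ above. Second, from the relations obtained, deduce that $\alpha=0$. Third, for the final assertion, note that $\int U(\cdot,\mu)^*wf=e\,\bigl(\beta(a^+)-\beta(b^-)\bigr)$, because its $e^\perp$-component vanishes ($\mathbf B=\operatorname{span}\{e\}$). The quantity $e^*JU(x_0,\mu)^*Jv(x_0)$ picks out the $e$-coefficient $\gamma$ of $v$ at $x_0$. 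If some $(v,\mu v+g)\in\mc T_\mx$ has $\gamma\ne0$, I pair it with $u$ via \eqref{Lagrange} and use that $\ov{T_\mn}$ annihilates $T_\mx$. Modifying $v$ near each end separately with Lemma \ref{L:8.4mod2} should then force both $\beta(a^+)=0$ and $\beta(b^-)=0$, whence the integral vanishes.

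The main obstacle is the bookkeeping in the second step. The modifications supplied by Lemma \ref{L:8.4mod2} are only determined up to the constant $c$, which may vanish. In addition, $\alpha$ and $\beta$ are not balanced, so the boundary values must be taken carefully along points of continuity. The first thing I would try is to combine the element from Lemma \ref{L:8.4mod2} with the null element $U(\cdot,\mu)e^\perp$ itself. Subtracting them yields a $v$ that vanishes in the $e^\perp$-direction near $b$, which lets me isolate $\alpha$ and $\beta$ at each endpoint individually rather than only their differences.
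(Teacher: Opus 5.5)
You have the right reduction (everything hinges on $\alpha=0$) and even the right test element, but the proposal never actually proves $\alpha=0$. The missing observation is one you already use for $u$ but not for your test functions. By the computation preceding \eqref{20230424.1}, since $\int_{(s,t)}U(\cdot,\mu)^*wg\in\mathbf B=\operatorname{span}\{e\}$, the $e$-coefficient of \emph{every} solution of $Jv'+qv=w(\mu v+g)$ is a single constant at all points of continuity of $(a,b)$. Now apply Lemma \ref{L:8.4mod2}, with the roles of $a$ and $b$ reversed, to $U(\cdot,\mu)e^\perp$; its $e$-coefficient is $0$, so the resulting element has $c=0$. It vanishes near $a$ and equals $U(\cdot,\mu)e^\perp$ near $b$, and \eqref{Lagrange} then gives $\alpha=0$ at once. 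This is exactly the paper's element, up to sign. The paper builds it directly by variation of constants, starting from $v=0$ left of $c$ with $h$ supported in $[c,d]$ and $\int U(\cdot,\mu)^*wh=e$, so that $v=-U(\cdot,\mu)e^\perp$ on $(d,b)$. So $c=0$ is not an obstacle but the whole point. As written, your relation $\alpha=-c\,\beta(a^+)$ (it should read $\ov c$) and its mirror image do not yield $\alpha=0$. Your proposed remedy cannot help either: $U(\cdot,\mu)e^\perp$ represents the zero class, so subtracting it changes neither side of \eqref{Lagrange}.

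The same oversight breaks your final step. You cannot force $\beta(a^+)=0$ and $\beta(b^-)=0$ separately. First, $\beta$ is determined by $[u]$ only up to an additive constant: replacing $u$ by $u+sU(\cdot,\mu)e^\perp$ shifts $\beta$ by $s$. Second, no modification of $v$ near one endpoint can change its $e$-coefficient $\gamma$ there without changing it everywhere. What is true is this: once $\alpha=0$, you have $v^*Ju=-\ov\gamma\beta$ at all points of continuity for every $(v,\mu v+g)\in\mc T_\mx$. A single application of \eqref{Lagrange} then gives $\ov\gamma\,(\beta(a^+)-\beta(b^-))=0$, i.e.\ $\ov\gamma\,e^*\int U(\cdot,\mu)^*wf=0$. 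This is the paper's argument. It also yields the limits \eqref{20230416.1}, but only when some element with $\gamma\neq0$ exists. That proviso is tacit in the paper as well, and it is genuinely needed. Take $q=0$, $w=\sm{1&0\\ 0&0}dx$ on $(0,\infty)$ and $\mu=0$. Then every element of $\dom\mc T_\mx$ has $\gamma=0$, and $f=((1+x)^{-1},0)^\top$, $u=(0,-\log(1+x))^\top$ give an element of $\ov{T_\mn}$ for which the first limit in \eqref{20230416.1} diverges.
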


\begin{proof}
When $x$ is a point of continuity we have $u(x)=U(x,\mu)(\alpha e+\beta(x)e^\perp)$ where $\alpha\in\bb C$ and $\beta$ is given by \eqref{20230424.1} but with $\la=\mu$ and $g=f$.

To prove the first claim we will construct an element $(v,\mu v+h)\in\mc T_\mx$ such that $0=(v^*Ju)^-(b)-(v^*Ju)^+(a)=-\alpha$.
There is an $h\in L^2(w)$ with support in $[c,d]$ such that $\int U(\cdot,\mu)^*wh=e$.
Let $v=0$ to the left of $c$ and define it elsewhere with the aid of the variation of constants formula \eqref{vc1}, choosing $u_0=0$, $x_0=c$, and $f=h$.
Then $v$ satisfies $Jv'+qv=w(\mu v+h)$, $v=0$ near $a$, and $v=-U(\cdot,\mu)e^\perp$ on $(d,b)$.
Thus $wv=0$ on $(d,b)$ so that $(v,\mu v+h)\in\mc T_\mx$.
Lagrange's identity \eqref{Lagrange} gives $(v^*Ju)^-(b)=(v^*Ju)^-(b)-(v^*Ju)^+(a)=0$.
But, for $x\in(d,b)$, we have, using \eqref{wronski},
$$(v^*Ju)(x)=e^*(\alpha e+\beta(x)e^\perp)=-\alpha\|e\|^2.$$
Thus $\alpha=0$ as promised.

To prove the second claim consider an arbitrary element $(v,\mu v+g)\in\mc T_\mx$.
Then $v(x)=U(x,\mu)(\gamma e+\delta(x)e^\perp)$ for some number $\gamma$ and some function $\delta$ when $x$ is a point of continuity.
Now we get
\begin{equation}\label{20230425.2}
(v^*Ju)(x)=(\ov\gamma e^*+\ov{\delta(x)}e^{\perp*})U(x,\mu)^*JU(x,\mu)\beta(x)e^\perp=-\ov\gamma \beta(x)
\end{equation}
at points of continuity.
Lagrange's identity \eqref{pLagrange} implies that $(v^*Ju)(x)$ has limits as $x$ tends to $a$ as well as $b$ and then \eqref{20230424.1} shows that the limits in \eqref{20230416.1} exist.

Finally, \eqref{20230425.2} entails that $0=(v^*Ju)^-(b)-(v^*Ju)^+(a)=\ov\gamma \int_{(a,b)} U(\cdot,\mu)^*wf$ which proves our last claim since $\gamma=-e^*JU(x_0,\mu)^*Jv(x_0)$.
\end{proof}

\subsection{Really bad points}
We call a point $x\in(a,b)$ a really bad point if the matrices $B_\pm(x,\la)$ fail to be invertible for all $\la\in\bb C$.
The presence of such points requires special considerations in many of the proofs to come.
Here we investigate their most basic properties.

If $x$ is a really bad point, then the ranks of $B_+(x,\la)$ and $B_-(x,\la)$ are equal to $1$ for all $\la$ in $\bb C$.
In particular, there are then vectors $\omega_\pm\in\bb R^2$ such that $B_+(x,0)=\omega_+\omega_-^*$ and $B_-(x,0)=-\omega_-\omega_+^*$.
The vectors $\omega_\pm$ are linearly independent since $\omega_-=c \omega_+$ for some $c\in\bb R$ would imply that $2J=B_+(x,0)+B_-(x,0)=0$.

We can now characterize the really bad points.
\begin{lem}\label{l:2.5}
The following are equivalent:
\begin{enumerate}
  \item $\Lambda_x=\bb C$.
  \item $0\in\Lambda$ and $\ran\Delta_w(x)$ is contained in $\ran B_+(x,0)$ or in $\ran B_-(x,0)$.
  \item $\ran B_+(x,\la)\cap\ran B_-(x,\la)=\{0\}$ for all $\la\in\bb C$.
\end{enumerate}
\end{lem}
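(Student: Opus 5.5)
We read the ``$0\in\Lambda$'' in (2) as ``$0\in\Lambda_x$'', the local condition. The plan is to prove (1)$\Leftrightarrow$(3) directly from Proposition~\ref{P:2.1}, and (1)$\Leftrightarrow$(2) by computing $\det B_+(x,\la)$ as a polynomial in $\la$.

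Two preliminary facts come first. Since $B_+(x,\la)=-B_-(x,\la)^\top$ and the matrices are $2\times2$, we have $\det B_+(x,\la)=\det B_-(x,\la)$. Hence $\Lambda_x$ is just the zero set of the polynomial $p(\la)=\det B_+(x,\la)$, which has degree at most $2$. Also, $\Delta_q(x)$ and $\Delta_w(x)$ are real symmetric matrices, because the entries of $q$ and $w$ are real, and $\Delta_w(x)\ge0$.

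For (1)$\Leftrightarrow$(3) we use the proof of Proposition~\ref{P:2.1}(4). If $B_\pm(x,\la)$ are invertible, their ranges intersect in $\bb C^2$. If they are not invertible, both ranges are one-dimensional and the proof shows the intersection is $\{0\}$. So $\ran B_+(x,\la)\cap\ran B_-(x,\la)=\{0\}$ holds exactly when $\la\in\Lambda_x$. Quantifying over all $\la$ gives (1)$\Leftrightarrow$(3).

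For (1)$\Leftrightarrow$(2), assume first $0\in\Lambda_x$; this is necessary for (1) anyway. By Proposition~\ref{P:2.1}(3), $B_+(x,0)$ then has rank one, and it is real. So we can write $B_+(x,0)=\omega_+\omega_-^\top$ with $\omega_\pm\in\bb R^2$, and then $B_-(x,0)=-B_+(x,0)^\top=-\omega_-\omega_+^\top$. Consequently $\ran B_+(x,0)=\operatorname{span}\{\omega_+\}$ and $\ran B_-(x,0)=\operatorname{span}\{\omega_-\}$.

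Next we split according to $\Delta_w(x)$.
\begin{itemize}
\item If $\Delta_w(x)=0$, then $p$ is constant and equal to $p(0)=0$, so (1) holds. Its range is trivially contained in either range, so (2) holds as well.
\item If $\Delta_w(x)$ is positive definite, the $\la^2$-coefficient of $p$ is $\tfrac14\det\Delta_w(x)\ne0$, so (1) fails. Also $\ran\Delta_w(x)=\bb C^2$ is not contained in a one-dimensional range, so (2) fails.
\item In the remaining case $\Delta_w(x)=\rho\rho^\top$ with $\rho\in\bb R^2\setminus\{0\}$. Here we use the identity $\det(ab^\top+cd^\top)=\det[a\ c]\,\det[b\ d]$ for $2\times2$ matrices. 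It gives
\[
p(\la)=\det\bigl(\omega_+\omega_-^\top-\tfrac{\la}{2}\rho\rho^\top\bigr)=-\tfrac{\la}{2}\det[\omega_+\ \rho]\,\det[\omega_-\ \rho].
\]
Thus $p\equiv0$ if and only if $\rho$ is parallel to $\omega_+$ or to $\omega_-$. That is, $\ran\Delta_w(x)=\operatorname{span}\{\rho\}$ lies in $\ran B_+(x,0)$ or in $\ran B_-(x,0)$, which is (2).
\end{itemize}

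The only step needing care is this determinant computation together with the rank-one factorizations. One must check that the factor vectors can be taken real, which they can since the matrices are real. One must also keep track that the range of $B_-(x,0)$ is spanned by $\omega_-$ rather than $\omega_+$; this is why (2) allows either range.
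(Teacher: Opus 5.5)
Your proof is correct. Reading ``$0\in\Lambda$'' as $0\in\Lambda_x$ is the right call. Taken literally, (2) would hold at any point of continuity $x$ as soon as $0\in\Lambda_y$ for some other $y$, while (1) fails there. The paper's own argument also tacitly needs $B_+(x,0)$ to have rank one in order to introduce $\omega_\pm$.

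Your route differs from the paper's, which runs the cycle (1)$\Rightarrow$(2)$\Rightarrow$(3)$\Rightarrow$(1).
For (1)$\Rightarrow$(2) the paper proceeds in three steps. It rules out an invertible $\Delta_w(x)$ by degree counting and writes $\Delta_w(x)=\omega\omega^*$. It then takes a kernel vector $v$ of $B_+(x,\la)$ for some $\la\neq0$. Splitting on whether $\omega^*v=0$, it reads off from $2\omega_+(\omega_-^*v)=\la\omega(\omega^*v)$ that $\omega$ is parallel to $\omega_-$ or to $\omega_+$.
For (2)$\Rightarrow$(3) it factors, e.g., $B_-(x,\la)=(\tfrac12\la|c|^2\omega_+-\omega_-)\omega_+^*$. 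This shows $B_\pm(x,\la)$ are singular for every $\la$, and the trivial intersection of ranges then comes from Proposition~\ref{P:2.1}(4).

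You instead get (1)$\Leftrightarrow$(3) directly, pointwise in $\la$, from Proposition~\ref{P:2.1}(4). This makes clear that (2) is not needed as an intermediate step.
You also replace both the kernel-vector argument and the explicit factorization by the single formula $\det B_+(x,\la)=-\tfrac{\la}{2}\det[\omega_+\ \rho]\det[\omega_-\ \rho]$. It delivers both directions of (1)$\Leftrightarrow$(2) at once. It also shows in passing that, when $0\in\Lambda_x$ and $\Delta_w(x)$ has rank one, $\Lambda_x$ is either $\{0\}$ or all of $\bb C$.
In exchange, the paper's factorization describes explicitly how $\ran B_-(x,\la)$ moves with $\la$ at a really bad point.
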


\begin{proof}
First we show that (1) implies (2).
Since $\Lambda_x=\bb C$ the determinants of $B_\pm(x,\la)$ cannot both be polynomials of degree $2$ implying that $\Delta_w(x)$ is not invertible.
Therefore we may suppose that $\Delta_w(x)=\omega\omega^*$ for some $\omega\in\bb C^2$.
If $\omega=0$ we are done so we assume now $\omega\neq0$.
Let $\la$ be any non-zero complex number.
Then we know that $B_\pm(x,\la)$ have rank $1$.
Hence $B_+(x,\la)=B_+(x,0)-\frac\la2\Delta_w(x)$ has a non-trivial kernel spanned by a vector $v$.
It follows that $2\omega_+(\omega_-^*v)=\la\omega(\omega^*v)$.
If $\omega^*v=0$ we also have $\omega_-^*v=0$ proving that $\omega_-$ and $\omega$ are parallel, i.e., $\ran\Delta_w(x)=\ran B_-(x,0)$.
If $\omega^*v\neq0$ we get instead that $\omega_+$ and $\omega$ are parallel, i.e., $\ran\Delta_w(x)=\ran B_+(x,0)$.
This concludes the proof of the first step.

Now we prove (2) implies (3).
We assume that $\ran\Delta_w(x)\subset\ran B_+(x,0)=\operatorname{span}\{\omega_+\}$, the other case being treated in a similar fashion.
Our assumption is $\omega=c\omega_+$ for some complex number $c$ (possibly $0$).
Then we have $B_-(x,\la)=-\omega_-\omega_+^*+\frac\la2\omega\omega^*=(\frac12\la |c|^2\omega_+-\omega_-)\omega_+^*$.
Therefore its range does not coincide with the range of $B_+(x,\la)$.

That (3) implies (1) is clear.
\end{proof}

Suppose $(a,b)$, $q$, and $w$ are given with associated relations $\mc T_\mx$ and $T_\mx$.
Moreover, let $x_1\in(a,b)$ be such that $\ran\Delta_w(x_1)\subset \ran B_+(x_1,0)\neq\bb C^2$, so that $x_1$ is a really bad point.
If $(u,f)\in\mc T_\mx$ define $(u_0,f_0)$ by setting $u_0=(u\chi_{(a,x_1)})^\#$ and $f_0=f\chi_{(a,x_1)}$.
Then $(u_0,f_0)\in\mc T_\mx$.
Similarly, if $u_1=(u\chi_{(x_1,b)})^\#$ and $f_1=f\chi_{[x_1,b)}$, then $(u_1,f_1)\in\mc T_\mx$.
Moreover, $([u_0],[f_0])$ and $([u_1],[f_1])$ are orthogonal elements of $T_\mx$.
To see this note that
$$\scp{([u_1],[f_1]),([u_0],[f_0])}=u_1(x_1)^*\Delta_w(x_1) u_0(x_1)=\frac14 u^+(x_1)^*\Delta_w(x_1) u^-(x_1)=0$$
where the last equality follows since $u^-(x_1)\in\ker B_-(x_1,0)=(\ran B_+(x_1,0))^\perp$ so that $\Delta_w(x_1) u^-(x_1)=0$.
Thus, defining
$$T_0=\{([(u\chi_{(a,x_1)})^\#],[f\chi_{(a,x_1)}]):(u,f)\in\mc T_\mx\}$$
and
$$T_1=\{([(u\chi_{(x_1,b)})^\#],[f\chi_{[x_1,b)}]):(u,f)\in\mc T_\mx\}$$
we obtain $T_\mx=T_0\oplus T_1$.
Note that $\ker B_-(x_1,0)\subset \ker\Delta_w(x_1)$.
Thus, setting $v=(u\chi_{(a,x_1)})^\#$ when $u\in\dom\mc T_\mx$, we see that $v_-(x_1)\in\ker B_-(x_1,0)$ and hence $v(x_1)^*\Delta_w(x_1)v(x_1)=0$.
Therefore $T_0$ is unitarily equivalent to a problem posed on $(a,x_1)$ together with the boundary condition $u^-(x_1)\in\ker B_-(x_1,0)$.

Analogously, if $\ran\Delta_w(x_1)\subset \ran B_-(x_1,0)\neq\bb C^2$, we define
$$T_0=\{([(u\chi_{(a,x_1)})^\#],[f\chi_{(a,x_1]}]):(u,f)\in\mc T_\mx\}$$
and
$$T_1=\{([(u\chi_{(x_1,b)})^\#],[f\chi_{(x_1,b)}]):(u,f)\in\mc T_\mx\}$$
and find $T_\mx=T_0\oplus T_1$.
Here we have that $T_1$ is unitarily equivalent to a problem posed on $(x_1,b)$ together with the boundary condition $u^+(x_1)\in\ker B_+(x_1,0)$.

If $\Delta_w(x_1)=0$ (and $B_\pm(x_1,0)$ is not invertible) we have that both of the above factorizations work, i.e., $T_0$ and $T_1$ are both unitarily equivalent to a problem posed on appropriate subintervals of $(a,b)$  together with suitable boundary conditions.

\section{Weyl's alternative}\label{S:3}
Our first result in this section is a characterization of the closure of $T_\mn$.
\begin{thm}\label{T:3.1}
The closure $\ov{T_\mn}$ of the minimal relation $T_\mn$ is the set of all $([u],[f])\in T_\mx$ for which there is a balanced representative $u_0$ of $[u]$ such that $([u_0],[f])\in T_\mx$  and
$$(v^*Ju_0)^-(b)=(v^*Ju_0)^+(a)=0$$
for all $(v,g)\in\mc T_\mx$.
\end{thm}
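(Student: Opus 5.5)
The plan is to prove two inclusions, using $\ov{T_\mn}=T_\mn^{**}=T_\mx^*$ (the closure of a relation is its double adjoint, and $T_\mn^*=T_\mx$) together with Lagrange's identity \eqref{Lagrange}.

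\emph{Easy inclusion.} Suppose $([u],[f])\in T_\mx$ has a balanced representative $u_0$ with $(u_0,f)\in\mc T_\mx$ and $(v^*Ju_0)^-(b)=(v^*Ju_0)^+(a)=0$ for all $(v,g)\in\mc T_\mx$. Then \eqref{Lagrange} gives $\scp{v,f}-\scp{g,u_0}=0$ for every $([v],[g])\in T_\mx$, so $([u],[f])\in T_\mx^*=\ov{T_\mn}$. Since the right-hand side of \eqref{Lagrange} depends only on classes, the choice of representative of $[v]$ is irrelevant.

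\emph{Hard inclusion.} Let $([u],[f])\in\ov{T_\mn}=T_\mx^*$. For every $(v,g)\in\mc T_\mx$, Lagrange gives
\[
(v^*Ju)^-(b)=(v^*Ju)^+(a),
\]
and we must choose the representative $u_0=u+\ell$ with $\ell\in\mc L_0$ so that both boundary values vanish separately. Note that changing $u$ by $\ell\in\mc L_0$ does not affect $wf$, so $(u_0,f)\in\mc T_\mx$ automatically.

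The idea is to separate the endpoints by cutting off. Given $(v,g)\in\mc T_\mx$, we would like some $(\tilde v,\tilde g)\in\mc T_\mx$ with $\tilde v=v$ near $a$ and $\tilde v$ harmless near $b$. Then
\[
(v^*Ju)^+(a)=(\tilde v^*Ju)^+(a)=(\tilde v^*Ju)^-(b).
\]
How such a $\tilde v$ is produced depends on the case.

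\emph{Definite case, $\Lambda\neq\bb C$.} First shift by a real $\mu\notin\Lambda$ using \eqref{20230425.1}, which puts us in the situation $0\notin\Lambda$. Write $v$ as a solution of $Jv'+qv=w(\mu v+(g-\mu v))$; it has finite norm near $a$. Lemma \ref{L:8.4mod1} then supplies $\tilde v$ equal to $v$ near $a$ and $0$ near $b$. Hence $(v^*Ju)^+(a)=0$, and symmetrically at $b$. Here $u_0=u$.

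\emph{Case $\dim\mc L_0=1$.} Use Lemma \ref{L:8.4mod2}, which gives $\tilde v=\gamma U(\cdot,\mu)e$ near $b$. Then compute $(\tilde v^*Ju)(x)$ at points of continuity via \eqref{20230425.2}, using $u=\beta U(\cdot,\mu)e^\perp$ from Lemma \ref{L:2.3}; this gives $-\ov\gamma\beta(x)$. Now pick $\ell=cU(\cdot,\mu)e$, the element spanning $\mc L_0$, so as to adjust the limit. Adding $\ell$ changes $\alpha$ rather than $\beta$, so I instead adjust $\beta(x_0)$: the limits \eqref{20230416.1} exist, so I choose $\beta(x_0)$ to make $\lim_{x\uparrow b}\beta(x)=0$. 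Hmm, but $\beta(x_0)$ is determined by $u$. The genuine freedom is $\ell\in\mc L_0$ with $J\ell'+q\ell=0$, and $\ell$ must be proportional to $U(\cdot,\mu)e$ at points of continuity since $w\ell=0$. I expect the correct bookkeeping to show that $\lim_{x\uparrow b}\beta$ vanishes whenever some $v$ has $\gamma\ne0$, by the last statement of Lemma \ref{L:2.3}. If no $v$ has $\gamma\neq0$, then every boundary term already vanishes.

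\emph{Case $\Lambda=\bb C$ (really bad points).} Use the decomposition $T_\mx=T_0\oplus T_1$ at a really bad point $x_1$. This reduces the problem to subintervals with a boundary condition at $x_1$, where one can cut off at $x_1$ directly, since $v\chi_{(a,x_1)}$ is again in $\mc T_\mx$. This disposes of the case without the lemmas.

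The main obstacle is the non-definite case: choosing the representative $u_0$ correctly and verifying that a single $u_0$ works simultaneously for all $v$ at both ends. Also delicate is handling $\Lambda=\bb C$ when there are possibly infinitely many really bad points, where I would cut at a really bad point near each end.
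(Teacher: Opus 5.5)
Your easy inclusion, the definite case and the case $\Lambda=\bb C$ are fine and follow the paper's proof. The definite case uses the cut-off of Lemma~\ref{L:8.4mod1}; the case $\Lambda=\bb C$ cuts at a single really bad point. Both then apply Lagrange's identity. One really bad point suffices, and no choice of representative is needed there.

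The genuine gap is the case $\dim\mc L_0=1$, which is exactly where the representative matters. There you have misidentified $\mc L_0$: it is spanned by $U(\cdot,\mu)e^\perp$, not by $U(\cdot,\mu)e$. Indeed, if $\ell=U(\cdot,\mu)z$ satisfies $w\ell=0$, then $\int h^*wU(\cdot,\mu)z=0$ for all compactly supported $h\in L^2(w)$. That is, $z\perp\mathbf B=\operatorname{span}\{e\}$. Your version would also contradict the first claim of Lemma~\ref{L:2.3}, which forces $\alpha=0$ for every representative of $[u]$.

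Consequently, adding $\ell=cU(\cdot,\mu)e^\perp$ shifts $\beta$ by the constant $c$. Since $\lim_{x\uparrow b}\beta(x)$ exists by Lemma~\ref{L:2.3}, you may fix the representative $u_0$ for which this limit is $0$. This choice is made once, independently of $v$. Then, for each $(v,g)\in\mc T_\mx$, take the element $(v_0,g_0)$ of Lemma~\ref{L:8.4mod2}. It gives $(v_0^*Ju_0)(x)=-\ov\gamma\beta(x)\to0$ as $x\uparrow b$. Hence $(v^*Ju_0)^+(a)=\scp{g_0,u_0}-\scp{v_0,f}=0$, and Lagrange's identity then gives $(v^*Ju_0)^-(b)=0$ as well. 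This is what resolves the simultaneity issue you flag as the main obstacle.

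Your fallback is false. It claims that $\lim_{x\uparrow b}\beta=0$ holds automatically once some $v$ has $\gamma\neq0$, by the last statement of Lemma~\ref{L:2.3}. That statement only yields $e^*\int U(\cdot,\mu)^*w(f-\mu u)=0$. This says the limits of $\beta$ at $a$ and at $b$ coincide, not that they vanish. For a fixed representative they need not vanish. If $u_0$ is a good representative, then $u_0+cU(\cdot,\mu)e^\perp$ represents the same class with the same $f$. Yet a direct computation with \eqref{wronski} gives $v^*J\bigl(cU(\cdot,\mu)e^\perp\bigr)=-c\ov\gamma$. So its boundary terms against $v$ equal $-c\ov\gamma$ at both ends, which is nonzero whenever $c\neq0$ and $\gamma\neq0$.
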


\begin{proof}
If $([u_0],[f])\in T_\mx$ and $(v^*Ju_0)^-(b)=(v^*Ju_0)^+(a)=0$ whenever $(v,g)\in\mc T_\mx$, then in view of Lagrange's identity \eqref{Lagrange}, $([u_0],[f])\in T_\mx^*=\ov{T_\mn}$.

To prove the converse let $([u],[f])$ and $(v,g)$ be arbitrary elements of $\ov{T_\mn}$ and $\mc T_\mx$, respectively.
Again, Lagrange’s identity gives $(v^*Ju)^-(b)=(v^*Ju)^+(a)$ and we have to prove that one of these terms is $0$ at least for a specific choice $u_0\in [u]$ for $u$.

We assume, at first, that $0\not\in\Lambda$ and $\mc L_0=\{0\}$.
According to Lemma \ref{L:8.4mod1} we can find an element $(v_0,g_0)\in \mc T_\mx$ such that $v_0=v$ near $a$ and $v_0=0$ near $b$.
Hence
$$(v^*Ju)^+(a)=(v_0^*Ju)^+(a)-(v_0^*Ju)^-(b)=\<g_0,u\>-\<v_0,f\>=0.$$
This proves our claim in the first case.

Next we assume $\mc L_0\neq\{0\}$ but still $0\not\in\Lambda$.
In this case Lemma \ref{L:8.4mod2} gives an element $(v_0,g_0)\in \mc T_\mx$ such that $v_0=v$ near $a$ and $v_0=\gamma U(\cdot,0)e$ near $b$ for some number $\gamma$.
Additionally, we have the option of choosing a representative of $[u]$.
Lemma \ref{L:2.3} shows that $u(x)=\beta(x)U(x,0)e^\perp$ if $x$ is a point of continuity.
Here $\beta(x)=\beta(x_0)-e^*\int_{[x_0,x)}U(\cdot,0)^*wf$ has a limit as $x$ tends to $b$.
Since the elements of $\mc L_0$ are multiples of $U(\cdot,0)e^\perp$ we may choose a representative $u_0$ of $[u]$ for which $\lim_{x\uparrow b}\beta(x)=0$, i.e.,
we may assume, for points of continuity, $u_0(x)=\beta(x)U(x,0)e^\perp$ where $\beta(x)=e^*\int_{(x,b)}U(\cdot,0)^*wf$.
This gives
$$(v_0^*Ju_0)(x)=\ov{\gamma}\beta(x) e^*U(x,0)^*JU(x,0)e^\perp=-\ov{\gamma}\beta(x)$$
and hence $(v_0^*Ju_0)^-(b)=0$.
Thus
$$(v^*Ju_0)^+(a)=(v_0^*Ju_0)^+(a)-(v_0^*Ju_0)^-(b)=\<g_0,u_0\>-\<v_0,f\>=0$$
the desired conclusion.

Next we consider the case where $0$ is in $\Lambda$ but $\Lambda$ is not the whole complex plane.
In that case $\Lambda$ is countable and hence there are real numbers not in $\Lambda$.
Suppose $\mu$ is any such number, define $\tilde q=q-\mu w$ and $\tilde w=w$, and denote objects related to these new coefficients by a $\tilde{}$ accent.
Then note that $0\notin\tilde{\Lambda}$ and that $\tilde{\mc L}_0=\mc L_0$.
Since, according to \eqref{20230425.1}, $([u],[f-\mu u])\in\ov{\tilde T_\mn}$ we have, by what we already proved, an $u_0\in[u]$ such that $(\tilde v^*Ju_0)^+(a)=0$ whenever $(\tilde v,\tilde g)\in\tilde{\mc T}_\mx$.
In particular, we may choose $\tilde v=v$ and $\tilde g=g-\mu v$ to obtain $(v^*Ju_0)^+(a)=0$.

It now remains to consider the case when $\Lambda=\bb C$.
Suppose $x_1$ is a really bad point such that $\ran\Delta_w(x_1)\subset\ran B_+(x_1,0)$.
If $(v,g)\in\mc T_\mx$ we must have $v^-(x_1)\in\ker B_-(x_1,0)$.
To the right of $x_1$ we may continue $v$ and $g$ by $0$ and still have an element in $\mc T_\mx$.
This allows, as before, to show that $(v^*J u)^+(a)=0$.
If $\ran\Delta_w(x_1)\subset\ran B_-(x_1,0)$ we modify $(v,g)$ to the left of $x_1$ and can therefore show that $(v^*J u)^-(b)=0$.
\end{proof}

The following lemma outlines what happens when existence and uniqueness of solutions cannot be guaranteed.

\begin{lem}\label{P:3.3}
Suppose $\la\in\Lambda$ and $x_1,x_2\in\Xi_\la$ where $x_1\leq x_2$.
If $v$ is a solution of $Ju'+qu=\la wu$, then so are  $(v\chi_{(a,x_1)})^\#$, $(v\chi_{(x_2,b)})^\#$, and $(v\chi_{(x_1,x_2)})^\#$.
If $\la$ is not real, then $(v\chi_{(x_1,x_2)})^\#\in\mc L_0$ and if at least one of $\Lambda_{x_1}$ and $\Lambda_{x_2}$ is different from $\bb C$, we even have  $v\chi_{(x_1,x_2)}=0$.
\end{lem}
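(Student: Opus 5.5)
Since all three functions are truncations of a solution, the plan is to check the equation separately on the open pieces and at the cut points $x_1,x_2$. Away from these points, $(v\chi_{(a,x_1)})^\#$ coincides locally with $v$ or with $0$, so there the equation $Ju'+qu=\la wu$ holds trivially. Everything therefore reduces to the jump condition
$$B_+(x,\la)u^+(x)-B_-(x,\la)u^-(x)=0$$
at $x=x_1$ (and at $x_2$). For $u=(v\chi_{(a,x_1)})^\#$ we have $u^-(x_1)=v^-(x_1)$ and $u^+(x_1)=0$, so we need $B_-(x_1,\la)v^-(x_1)=0$. Since $v$ solves the equation at $x_1$, $B_+v^+=B_-v^-$ lies in $\ran B_+(x_1,\la)\cap\ran B_-(x_1,\la)$. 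Because $x_1\in\Xi_\la$, the matrices $B_\pm(x_1,\la)$ are not invertible, so by Proposition~\ref{P:2.1}(4) this intersection is trivial. Hence $B_+(x_1,\la)v^+(x_1)=0=B_-(x_1,\la)v^-(x_1)$. This gives the claim for the left truncation, and symmetrically for $(v\chi_{(x_2,b)})^\#$, which needs $B_+(x_2,\la)v^+(x_2)=0$. The middle piece needs both conditions, and they hold by the same argument. Here I am reading $\Xi_\la$ as the set where both $B_\pm$ fail to be invertible, which by Proposition~\ref{P:2.1}(1) is the same as one of them failing. If $x_1=x_2$ the middle piece is $0$ and nothing is to be done.

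For non-real $\la$, set $u=(v\chi_{(x_1,x_2)})^\#$. It is a solution of $Ju'+qu=\la wu$ with compact support in $(a,b)$, so Lagrange's identity \eqref{pLagrange} applies with $v$ and $u$ replaced by $u$ and $g=\la u=f$. Taking $c<x_1$ and $d>x_2$, the boundary terms vanish and we get $0=\int u^*w(\la u)-\int(\la u)^*wu=(\la-\ol)\norm u^2$. Hence $\norm u=0$, i.e.\ $wu=0$, so $Ju'+qu=0$ and $u\in\mc L_0$. The balanced convention makes the integration by parts valid, and the equation holds at the atoms $x_1,x_2$ by the first part.

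For the final claim, suppose $\Lambda_{x_1}\ne\bb C$, the case of $x_2$ being symmetric. Then $x_1$ is not a really bad point. By Lemma~\ref{l:2.5}(3) there is $\mu$ with $\ran B_+(x_1,\mu)\cap\ran B_-(x_1,\mu)\neq\{0\}$, so by Proposition~\ref{P:2.1}(4) both $B_\pm(x_1,\mu)$ are invertible; in particular this holds for all but at most two values of $\mu$. Since $u\in\mc L_0$, $u$ solves $Ju'+qu=\mu wu$ for every $\mu$. Choose $\mu$ so that $B_\pm(x_1,\mu)$ are invertible. The jump condition at $x_1$ then reads $B_+(x_1,\mu)u^+(x_1)=B_-(x_1,\mu)u^-(x_1)=0$ because $u^-(x_1)=0$. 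So $u^+(x_1)=0$, and hence $u(x_1)=0$.

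The main obstacle is to propagate this vanishing to the right across the interval $(x_1,x_2)$. Points of $\Xi_\mu$ form a discrete set, but the set of bad points in $(x_1,x_2)$ for the chosen $\mu$ could still block uniqueness. My remedy is to pick $\mu$ differently near each point. On a neighborhood of any $y\in(x_1,x_2)$ that is not really bad, some $\mu$ gives local invertibility, and uniqueness from Theorem~\ref{t:ex-un} propagates the zero. Really bad points $y$ in between are handled by Lemma~\ref{l:2.5}(2) and the structure $B_+=\omega_+\omega_-^*$: if $u^-(y)=0$, the jump condition gives $u^+(y)\in\ker B_+(y,\mu)$ for all $\mu$. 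I would then need the $\mc L_0$ condition $wu=0$, i.e.\ $\Delta_w(y)u(y)=0$, to force $u^+(y)=0$, and I expect this step to need the most care. The set of points up to which $u=0$ is closed and open in $[x_1,x_2)$ via this local argument (discreteness of $\Xi_\mu$ keeps it from stalling), so $u\equiv0$ on $(x_1,x_2)$.
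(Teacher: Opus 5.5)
Your arguments for the first two claims are correct and follow the paper. The jump condition $B_+(x,\la)v^+(x)=B_-(x,\la)v^-(x)$ together with Proposition~\ref{P:2.1}(4) gives $B_+(x,\la)v^+(x)=0=B_-(x,\la)v^-(x)$ at $x_1$ and $x_2$. Your computation $0=(\la-\ol)\norm{u}^2$ is the unpacked form of the paper's remark that $\ov{T_\mn}$ has no non-real eigenvalues.

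The gap is in the last claim, at exactly the step you flagged. At a really bad point $y\in(x_1,x_2)$, the facts $u^-(y)=0$ and $wu=0$ do not force $u^+(y)=0$. If $\Delta_w(y)=0$, the condition $\Delta_w(y)u(y)=0$ says nothing. If $\ran\Delta_w(y)\subset\ran B_-(y,0)=\operatorname{span}\{\omega_-\}$, it only says $\omega_-^*u^+(y)=0$, and that is already the content of $B_+(y,0)u^+(y)=\omega_+\omega_-^*u^+(y)=0$. Only when $\{0\}\neq\ran\Delta_w(y)\subset\ran B_+(y,0)$ does it supply the independent condition $\omega_+^*u^+(y)=0$.

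This cannot be repaired, because the last claim is false once a really bad point lies strictly between $x_1$ and $x_2$. Take $(a,b)=(0,4)$, $w=2\id\,\delta_1$, and $q=\sm{2&0\\ 0&-2}\delta_2+\sm{-2&0\\ 0&2}\delta_3$.
\begin{itemize}
\item At $1$: $\det B_\pm(1,\la)=\la^2+1$, so $\Lambda_1=\{\pm\iu\}$.
\item At $2$ and $3$: $B_+(2,\la)=B_-(3,\la)=\sm{1&-1\\ 1&-1}$ for every $\la$, so $\Lambda_2=\Lambda_3=\bb C$.
\item Let $u$ be the balanced function equal to $(1,1)^\top$ on $(2,3)$ and vanishing off $[2,3]$. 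Since $(1,1)^\top\in\ker B_+(2,\la)=\ker B_-(3,\la)$, $u$ satisfies both jump conditions, and $wu=0$. Hence $u\in\mc L_0$ and $Ju'+qu=\iu wu$.
\end{itemize}
With $\la=\iu$, $x_1=1$, $x_2=3$ and $v=u$, we have $\Lambda_{x_1}\neq\bb C$, yet $v\chi_{(x_1,x_2)}\neq0$.

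So an additional hypothesis is needed, for instance that $(x_1,x_2)$ contains no really bad point, or simply $\Lambda\neq\bb C$. The paper's proof relies on this tacitly. It picks $\nu\in\bb C\setminus\Lambda_{x_1}$ and invokes uniqueness of initial value problems for $Ju'+qu=\nu wu$. That requires $B_\pm(x,\nu)$ to be invertible along the whole stretch, which is justified when $\Lambda\neq\bb C$ by taking $\nu\notin\Lambda$; this covers the later use of the claim in Lemma~\ref{L:7.1}.

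Under that hypothesis your local patching argument is correct but unnecessary. Since $u\in\mc L_0$ solves $Ju'+qu=\nu wu$ and vanishes on $(a,x_1)$, Theorem~\ref{t:ex-un} applied on all of $(a,b)$ gives $u\equiv0$ at once.
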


\begin{proof}
By (4) of Proposition \ref{P:2.1} we have $B_+(x_1,\la)u^+(x_1)=B_-(x_2,\la)u^-(x_2)=0$.
Therefore $\tilde v=(v\chi_{(x_1,x_2)})^\#$ is also a solution of $Ju'+qu=\la wu$.
Similarly for $(v\chi_{(a,x_1)})^\#$ and $(v\chi_{(x_2,b)})^\#$.

Since $\tilde v$ is in $\mc L^2(w)$, we have that $([\tilde v],\la[\tilde v])\in\ov{T_\mn}$.
But $\ov{T_\mn}$ cannot have non-real eigenvalues, so $\tilde v\in \mc L_0$, if $\lambda\in\bb C\setminus\bb R$.

Now suppose $\Lambda_{x_1}\neq\bb C$ and choose $\nu\in\bb C\setminus\Lambda_{x_1}$.
Then $J\tilde v'+q\tilde v=\nu w\tilde v$ is also satisfied in $(a,b)$.
Since initial value problems for $Ju'+qu=\nu wu$ have unique solutions and since $\tilde v$ vanishes to the left of $x_1$, it follows that $\tilde v$ is identically equal to $0$.
Of course the same argument applies when $\Lambda_{x_2}\neq\bb C$.
\end{proof}

While compactly supported solutions of $Ju'+qu=\la wu$ must be in $\mc L_0$ when $\la$ is not real, such solutions may have positive norm when $\la$ is real as the following example shows.
Let $(a,b)=(0,\infty)$ and
$$q=\sum_{n=1}^\infty \sm{2&2\\ 2&2}\delta_{2n-1}+\sum_{n=1}^\infty \sm{2&-2\\ -2&-2}\delta_{2n}\sas w=\sum_{n=1}^\infty \sm{2&0\\ 0&0}\delta_{2n-1}.$$
Then $B_+(2n-1,1)=B_-(2n,1)$ and $B_+(2n,1)=B_-(2n+1,1)$ and each of these fails to be invertible.
Therefore anyone of the intervals $(n,n+1)$, $n\in\bb N$ can support a compactly supported solution of $Ju'+qu=wu$ of positive norm and, consequently $1$ is an eigenvalue of $\ov{T_\mn}$ of infinite multiplicity.

Before we proceed we need to introduce some conventions.
If $\Xi_\la$ does not accumulate at the endpoint $b$, then there is a final interval $(\breve a,b)$ which does not intersect $\Xi_\la$ and has a regular endpoint $\breve a$.
We may choose $\breve a=\max\Xi_\la$ if $\Xi_\la\neq\emptyset$ and any point $\breve a\in(a,b)$ otherwise.
Let $\breve q$ and $\breve w$ be, respectively, the restrictions of $q$ and $w$ to the interval $(\breve a,b)$ and designate the corresponding relations and other objects associated with these new coefficients by a $\breve{}$ accent.
In particular, let $\breve{\mc D}_\la=\{(r,\la r)\in\breve{\mc T}_\mx\}$.
It will follow from Lemma~\ref{L:7.2} below that $\breve{\mc D}_\la$ is at least one-dimensional provided that $\la\not\in\bb R$.
Moreover, in Lemma~\ref{L:7.1}, we will show that $\breve{\mc D}_\la$ can be defined and is two-dimensional for some non-real $\la$ only if this is the case for all non-real $\la$.
Of course, we have an analogous situation if $\Xi_\la$ does not accumulate at $a$ in which case there is an initial interval $(a,\breve b)$ which does not intersect $\Xi_\la$ and has a regular endpoint $\breve b$.

\begin{lem}\label{L:7.2}
Suppose $\Lambda\neq\bb C$, $a$ is a regular endpoint, and $\la$ is not in $\bb R$.
Then there are non-trivial solutions of $Ju'+qu=\la wu$ in $\mc L^2(w)$.
The statement is also true when $b$ is a regular point.
\end{lem}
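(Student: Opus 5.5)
Because $\Lambda\neq\bb C$ is countable and $\la\notin\bb R$, the set $\Xi_\la$ is discrete and cannot accumulate at the regular endpoint $a$. So there is an initial interval $(a,\breve b)$ that misses $\Xi_\la$, and we may take $\breve b$ to be a point of continuity. On $(a,\breve b)$ the matrices $B_\pm(x,\la)$ are invertible. Since $q$ and $w$ are finite measures near $a$, Theorem~\ref{t:ex-un} lets us pose initial conditions at $a$ itself. The plan is to produce a solution on $(a,\breve b)$ that is square integrable there and vanishes at $\breve b$ (or at least continues by zero), and then extend it to all of $(a,b)$. The extension to $[\breve b,b)$ is the step I expect to be the main obstacle.

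\emph{Step 1 (the problem on the initial interval).} On $(a,\breve b)$ both endpoints are regular and $q,w$ are finite measures. Hence every solution of $Ju'+qu=\la wu$ there is bounded, and it has finite norm because $\int_{(a,\breve b)}u^*wu\le \sup|u|^2\,w((a,\breve b))<\infty$. The solution space on $(a,\breve b)$ is therefore two-dimensional and entirely contained in $\mc L^2(\breve w)$.

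\emph{Step 2 (continuing to the right).} The simplest continuation is by zero, which is a valid solution only if $B_-(\breve b,\la)u^-(\breve b)=0$ at some point where $B_-$ is singular. In general no such point need exist, because $\Xi_\la$ might be empty. I would then split into two cases.

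\emph{Case 1: $\Xi_\la\cap(a,b)\neq\emptyset$.} Let $x_1=\min\Xi_\la$, which exists by discreteness near $a$, and take $\breve b=x_1$. By Proposition~\ref{P:2.1}, $\ker B_-(x_1,\la)$ is nontrivial. Choose the initial value at $a$ so that the solution $u$ on $(a,x_1)$ satisfies $u^-(x_1)\in\ker B_-(x_1,\la)$; this is possible because $u\mapsto u^-(x_1)$ is a bijection from the two-dimensional solution space onto $\bb C^2$. Then $(u\chi_{(a,x_1)})^\#$ satisfies the jump condition $B_+u^+-B_-u^-=0$ at $x_1$ with $u^+=0$, so it solves the equation on $(a,b)$, has finite norm, and is nontrivial. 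It remains to check that it is not a function of norm zero, since "non-trivial in $\mc L^2(w)$" should presumably mean a nonzero element. If its norm were zero, continuing with a different kernel vector would not help. In that situation I would instead fall back on Case 2, applied on $(a,b)$ with a nonzero element of the deficiency space.

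\emph{Case 2: $\Xi_\la=\emptyset$.} Then solutions exist globally and uniquely, and we are in the classical situation of \cite{MR4047968}. Take the two-dimensional solution space. For $c$ near $b$, the Lagrange identity with $v=u$ and $\ov{\la}$ gives
\[(u^*Ju)^-(c)-(u^*Ju)^+(a)=(\la-\ol)\int_{(a,c)}u^*wu.\]
The standard Weyl-disk argument on the sesquilinear form $u\mapsto -\iu(u^*Ju)$ at $c$ then shows that the set of solutions satisfying $\iu(u^*Ju)^-(c)\le \iu(u^*Ju)^+(a)$ (with the appropriate sign for $\Im\la$) contains a nonzero vector for every $c$. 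For such a vector, $\int_{(a,c)}u^*wu\le C|u(a)|^2$ with $C$ independent of $c$. Normalizing so that $|u(a)|=1$ and using compactness of the unit sphere in $\bb C^2$ yields a limit solution with $\int_{(a,b)}u^*wu<\infty$. Here the quadratic form $-\iu\,u(a)^*Ju(a)$ is indefinite on $\bb C^2$, so half-cones exist.

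The case where $b$ is regular follows by the same argument with the roles of $a$ and $b$ interchanged.
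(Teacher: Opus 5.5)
Your argument is correct in substance, but it takes a different route from the paper's.

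\textbf{The paper's route.} The paper argues abstractly. It first disposes of the case $\mc L_0\neq\{0\}$: a nonzero element of $\mc L_0$ already solves $Ju'+qu=\la wu$ and lies in $\mc L^2(w)$. It then uses $\Lambda\neq\bb C$ to pick a real $\mu\notin\Lambda$. From Lemma~\ref{L:8.4mod1} it gets $(u,\mu u+f),(v,\mu v+g)\in\mc T_\mx$ with $u(a)=(1,0)^\top$, $v(a)=(0,1)^\top$ and $u=v=0$ near $b$. Lagrange's identity would force $v(a)^*Ju(a)=0$ if $T_\mx$ were self-adjoint, which is false. Hence $n_\pm>0$ via $T_\mn^*=T_\mx$ and von Neumann's formula, so $D_\la\neq\{0\}$ for every non-real $\la$ at once.

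\textbf{Your route.} You build the solution by hand for the given $\la$. Either you truncate at $x_1=\min\Xi_\la$ with $u^-(x_1)\in\ker B_-(x_1,\la)$ (the mechanism of Lemma~\ref{P:3.3}), or you use Weyl's compactness argument when $\Xi_\la=\emptyset$.

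\textbf{What each buys.} Your route is elementary and self-contained. It needs neither $T_\mn^*=T_\mx$ nor Lemma~\ref{L:8.4mod1}, and not even $\Lambda\neq\bb C$, since for fixed $\la$ the set $\Xi_\la$ is finite near a regular endpoint anyway. The paper's route is shorter given the machinery. It yields $n_\pm>0$ directly, which is the form in which the lemma (and its ``slight modification'') is used in Theorem~\ref{T:8.7}.

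\textbf{Two repairs are needed.}

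First, delete the hedge at the end of Case~1 about falling back on Case~2. It is unavailable, because Case~2 presupposes $\Xi_\la=\emptyset$. It is also unnecessary. Here \emph{non-trivial} means non-zero as a function, as the paper's opening reduction shows. A positive-norm version of the lemma is false in general: take $q=0$ and $w=\sm{1&0\\0&0}dx$ on $(0,\infty)$; the only solutions in $\mc L^2(w)$ are the constants $(0,d)^\top$, which have norm $0$. Moreover, when $\mc L_0=\{0\}$ positivity of the norm is automatic. A solution of $Ju'+qu=\la wu$ with norm $0$ has $wu=0$, hence $Ju'+qu=0$, so it lies in $\mc L_0$.

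Second, in Case~2 the condition you wrote, $\iu(u^*Ju)^-(c)\le\iu(u^*Ju)^+(a)$, gives no bound. For $\Im\la>0$ it is just Lagrange's identity
$2\Im\la\int_{(a,c)}u^*wu=\iu u(a)^*Ju(a)-\iu(u^*Ju)^-(c)$,
so it holds for every solution. The cone condition must be imposed at $c$ alone. Since $\xi\mapsto\iu\xi^*J\xi$ is indefinite and $u\mapsto u^-(c)$ is bijective, you can pick $u\neq0$ with $\iu(u^*Ju)^-(c)\ge0$. Then $\int_{(a,c)}u^*wu\le|u(a)|^2/(2\Im\la)$, using $|\iu u(a)^*Ju(a)|\le|u(a)|^2$. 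With this bound your normalization and compactness argument goes through. For $\Im\la<0$, reverse the sign.
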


\begin{proof}
Clearly, we may assume that $\mc L_0=\{0\}$ and hence that $\mathbf B=\bb C^2$.
We also know that $\Lambda$ is at most countable and therefore its complement contains a real number $\mu$.
Let $U(\cdot,\mu)$ be a fundamental matrix for $Ju'+qu=\mu wu$ such that $U(x_0,\mu)=\id$ for some $x_0\in(a,b)$.
Lemma \ref{L:8.4mod1} gives elements $(u,f+\mu u)$ and $(v,g+\mu v)$ in $\mc T_\mx$ such that $u(a)=(1,0)^\top$, $v(a)=(0,1)^\top$, and $u(x)=v(x)=0$ near $b$.

Now suppose that the deficiency indices of $T_\mn$ are $0$ implying that $T_\mx$ is self-adjoint.
Since $(u,\mu u+f)$ and $(v,\mu v+g)$ are in $\mc T_\mx$ we get from Lagrange's identity \eqref{Lagrange} the absurdity that $0=\<v,\mu u+f\>-\<\mu v+g,u\> =v(a)^*Ju(a)=1$.
It follows that the deficiency spaces $D_\pm(\la)$, $\la\not\in\bb R$, cannot be trivial.
Thus there is a solution of $Ju'+qu=\la wu$ on $(a,b)$ with $\|u\|<\infty$.
A similar argument works, of course, with the roles of $a$ and $b$ reversed.
\end{proof}

\begin{lem}\label{L:7.1}
Suppose $a$ is a regular endpoint and, for some $\la_0\in\bb C\setminus\Lambda$, we have $\dim\mc D_{\la_0}=2$.
Then, for any $\la\in\bb C$ any solution of $Ju'+qu=\la wu$ is in $\mc L^2(w)$.
Moreover, if $\la$ is not real, then $\Xi_\la$ does not accumulate at $b$ and $\dim\mc D_{\la}=2$.
Again, the statement is true when the roles of $a$ and $b$ are reversed.
\end{lem}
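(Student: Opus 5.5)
The plan is to treat $\la$ as a perturbation of $\la_0$ and use the variation of constants formula, Theorem~\ref{t:vc}, built on a fundamental matrix for $\la_0$. Since $\ov u$ solves the $\ov{\la_0}$-equation whenever $u$ solves the $\la_0$-equation, and $\Lambda$ is invariant under conjugation (Proposition~\ref{P:2.1}(1)), all columns of $U(\cdot,\la_0)$ and of $U(\cdot,\ov{\la_0})$ lie in $\mc L^2(w)$. I normalize them at a point of continuity $x_0$ so that $U(x_0,\la_0)=U(x_0,\ov{\la_0})=\id$. Because $\la_0\notin\Lambda$, Theorem~\ref{t:vc} applies on all of $(a,b)$.

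\emph{Step 1 (all solutions are square integrable).} Let $u$ be any balanced solution of $Ju'+qu=\la wu$ on $(a,b)$; no uniqueness for $\la$ is needed. Rewrite the equation as $Ju'+(q-\la_0w)u=w(\la-\la_0)u$. The converse part of Theorem~\ref{t:vc} then gives, for $x\ge c$,
\[u^-(x)=U^-(x,\la_0)\Bigl(u(c)+(\la-\la_0)J^{-1}\int_{[c,x)}U(\cdot,\ov{\la_0})^*wu\Bigr).\]
I take $w$-norms over $(c,x)$ and apply Cauchy--Schwarz to the integral. This gives
\[\|u\|_{(c,x)}\le \|U(\cdot,\la_0)\|_{(c,b)}\bigl(|u(c)|+|\la-\la_0|\,\|U(\cdot,\ov{\la_0})\|_{(c,b)}\|u\|_{(c,x)}\bigr),\]
up to harmless constants; the jump at $x$ is absorbed using the balanced convention. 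I then choose $c$ so close to $b$ that $|\la-\la_0|\,\|U(\cdot,\la_0)\|_{(c,b)}\|U(\cdot,\ov{\la_0})\|_{(c,b)}<\tfrac12$. Since $\|u\|_{(c,x)}<\infty$ for $x<b$, this term can be absorbed into the left side, which yields a bound uniform in $x$, so $u$ has finite norm near $b$. The same argument works near $a$, or more simply one uses that $a$ is regular. On a compact middle interval $u$ is bounded and $w$ is finite. Hence $u\in\mc L^2(w)$.

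\emph{Step 2 ($\Xi_\la$ does not accumulate at $b$).} This is the step I expect to be the main obstacle, because invertibility of $B_\pm(x,\la)$ is an algebraic condition on the atoms and must be extracted from smallness of the $\la_0$-solutions. The jump condition gives $U^+(x,\la_0)=B_+(x,\la_0)^{-1}B_-(x,\la_0)U^-(x,\la_0)$. Combining this with the Wronskian relation \eqref{wronski}, I expect to express $B_+(x,\la_0)^{-1}$ through $U^\pm(x,\la_0)$, $J$ and $U^\mp(x,\ov{\la_0})^*$. I then write
\[B_+(x,\la)=B_+(x,\la_0)\Bigl(\id-\tfrac{\la-\la_0}{2}B_+(x,\la_0)^{-1}\Delta_w(x)\Bigr),\]
and analogously for $B_-$. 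Conjugating the bracket by $U^\pm(x,\la_0)$ should turn it into $\id-c\,J\,U(x,\ov{\la_0})^*\Delta_w(x)U(x,\la_0)$, or a close variant. The norm of the perturbation is then controlled by $\int_{\{x\}}U^*wU\le\|U\|^2_{(c,b)}$, which tends to $0$ as $c\uparrow b$. Consequently $B_\pm(x,\la)$ are invertible for all $x$ near $b$, and $\Xi_\la$ cannot accumulate at $b$. If this direct algebra turns out to be messy, I would instead argue by contradiction with Lemma~\ref{P:3.3}. For nonreal $\la$ and $\Lambda\ne\bb C$, every solution vanishes between consecutive points of $\Xi_\la$. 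Accumulation at $b$ would then force every element of $\mc D_\la$ to vanish near $b$. Pairing such elements with those of $\mc D_{\ov\la}$ in Lagrange's identity \eqref{Lagrange} would leave only the boundary term at the regular endpoint $a$, where it can be made nonzero, which is a contradiction.

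\emph{Step 3 ($\dim\mc D_\la=2$).} Once Step 2 is in place, for nonreal $\la$ I pick a point beyond $\max\Xi_\la$ and work on the final interval. There Theorem~\ref{t:ex-un} gives a two-dimensional solution space for the $\la$-equation. Step 1 shows that all these solutions are in $\mc L^2(w)$, and near $a$ they can be continued or cut off via Lemma~\ref{P:3.3}. Together with the constancy of deficiency indices in each half-plane and the bound $n_\pm\le2$, this gives $\dim\mc D_\la=2$. The case with $a$ and $b$ interchanged is symmetric.
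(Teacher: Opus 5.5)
\textbf{The gap is Step 2, and both of its versions fail.}

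In the perturbative version the perturbation is not small. The jump condition and $B_++B_-=2J$ give $B_+(x,\la_0)U(x,\la_0)=JU^-(x,\la_0)$. Using \eqref{wronski} for left limits, this yields $B_+(x,\la_0)^{-1}=U(x,\la_0)J^{-1}U^-(x,\ov{\la_0})^*$. So the conjugated perturbation is $\frac{\la-\la_0}{2}J^{-1}U^-(x,\ov{\la_0})^*\Delta_w(x)U(x,\la_0)$.

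Finiteness of $\int_{(c,b)}U^*wU$ controls $\Delta_w(x)^{1/2}U(x,\cdot)$ only for the balanced value. On the other hand, $U^-(x,\nu)=(\id-\frac12J(\Delta_q(x)-\nu\Delta_w(x)))U(x,\nu)$, and near a singular endpoint neither $\Delta_q$ nor $\Delta_w$ need be small.

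Your argument also never uses $\la\notin\bb R$, and for real $\la$ the conclusion can fail. Take $(a,b)=(0,\infty)$ with the following data for $n\ge1$, and nothing else:
\begin{itemize}
\item atoms $\Delta_q(2n)=\sm{0&-2\\ -2&0}$ and $\Delta_w(2n)=2^{n-1}\sm{1&1\\ 1&1}$;
\item $q=\sm{1&1\\ 1&1}dx$ on each $(2n,2n+1)$.
\end{itemize}
Then $a$ is regular and $\det B_+(2n,\la)=-2^{n-1}\la$. Hence $\Lambda=\{0\}$ and $\Xi_0=\{2n:n\ge1\}$ accumulates at $b$.

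Nevertheless $\dim\mc D_\iu=2$. For $\la\neq0$ the map $u^-(2n)\mapsto u^-(2n+2)$ is $-\sm{1&0\\ \delta_n&1}$ with $\delta_n=2^{3-n}/\la$, so solutions stay bounded. The atom at $2n$ contributes $2^{3-n}|\la|^{-2}|u_1^-(2n)|^2$ to the norm, which is summable. In this example $\Delta_w(2n)^{1/2}U^-(2n,\ov{\la_0})$ is unbounded, even though the balanced values tend to $0$.

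The Lagrange fallback fails as well. Suppose $\Xi_\la$ accumulates at $b$. Then all $\la$-solutions vanish beyond $x_1=\min\Xi_\la$ and satisfy $B_-(x_1,\la)u^-(x_1)=0$, so they are multiples of a single $r$. The $\ov\la$-solutions are then multiples of $\ov r$. The boundary term at $a$ is therefore a multiple of $\ov r(a)^*Jr(a)=r(a)^\top Jr(a)=0$, so there is nothing to make nonzero.

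\textbf{What is missing, and how the paper argues.} You need to prove $\dim\mc D_\la=2$ for all non-real $\la$ first, without any information about $\Xi_\la$.
\begin{itemize}
\item The map $(u,\la u)\mapsto([u],\la[u])$ sends $\mc D_\la$ onto $D_\la$ with kernel $\mc L_0$, and $\dim\mc L_0<\infty$ because $\Lambda\neq\bb C$.
\item Hence $\dim\mc D_\la=n_\pm+\dim\mc L_0$ for every non-real $\la$. This equals $2$ as soon as it does at one non-real point, for instance at $\la_0$ itself if $\la_0\notin\bb R$.
\item The paper splits into $\mc L_0=\{0\}$ and $\mc L_0\neq\{0\}$. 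In the second case Lemma~\ref{L:2.4} even gives $\Xi_\la=\emptyset$.
\end{itemize}
This is exactly the deficiency-index argument you invoke at the end of Step~3, and it does not depend on Step~2. Your final-interval construction is unnecessary, and it would not produce two solutions on $(a,b)$ anyway: extending by zero across $x_N=\max\Xi_\la$ requires $u^+(x_N)\in\ker B_+(x_N,\la)$. Once $\dim\mc D_\la=2$ is known, the first half of your fallback finishes the proof, since accumulation would force $\dim\mc D_\la\le1$.

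\textbf{Step 1 also fails as stated.} The same example shows it cannot hold for every $\la$. At $\la=0$, take the balanced function equal to $(2n+1-x,\,x-2n)^\top$ on $(2n,2n+1)$, to $\frac12(1,1)^\top$ at $2n$, and to $(0,1)^\top$ elsewhere. It solves $Ju'+qu=0$, but its norm is $\sum_n2^{n-1}=\infty$.

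The jump is not absorbed by the balanced convention. At an atom of $w$, the balanced value of $u$ involves $U^+(x,\la_0)$, not $U(x,\la_0)$. The paper's estimate, written only at points of continuity, has the same blind spot.

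So that part of the statement should be restricted to non-real $\la$. For the remaining claims you need Step~1 only at one non-real $\la\notin\Lambda$, and only when $\la_0$ is real. Even there, the atoms of $w$ require a separate argument.
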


\begin{proof}
We define the norm $\norm u_{c,d}$ by setting $\norm u_{c,d}=\int_{(c,d)} u^*wu$ whenever $c$ and $d$ are points of continuity.
Let $U(\cdot,\la_0)$ be the fundamental matrix for $Ju'+qu=\la_0 wu$ satisfying $U(a,\la_0)=\id$ and denote the first and second column of $U(\cdot,\la_0)$ by $\phi$ and $\psi$, respectively.
These are elements of $\mc L^2(w)$ and therefore we may choose $c$ such that $\norm{\phi}_{c,d}$ and $\norm{\psi}_{c,d}$ are smaller than $M=1/(2\sqrt{|\la-\la_0|})$ for all $d\in(c,b)$.

Now suppose $u$ satisfies $Ju'+qu=\la wu$ or, equivalently, $Ju'+(q-\la_0w)u=(\la-\la_0)wu$.
Then, by the variation of constants formula (Theorem \ref{t:vc}), there is a vector $(\alpha,\beta)^\top\in\bb C^2$ such that
$$u(x)=U(x,\la_0)\big(\sm{\alpha\\ \beta} +(\la-\la_0)J^{-1}\int_{(c,x)} U(\cdot,\ov{\la_0})^*wu\big)$$
whenever $x\geq c$ is a point of continuity.
Since $U(\cdot,\ov{\la_0})^*=(\ov{\phi},\ov{\psi})^*$ we introduce $\gamma(x)=\int_{(c,x)}\ov{\phi}{}^*wu$ and $\delta(x)=\int_{(c,x)}\ov{\psi}{}^* wu$.
Then the Cauchy-Schwarz inequality shows that $|\gamma(x)|\leq M \norm u_{c,d}$ and $|\delta(x)|\leq M \norm u_{c,d}$ whenever $x\in(c,d)$.
Next we get from Minkowski's inequality
$$\norm u_{c,d}\leq (|\alpha|+|\la-\la_0| M \norm u_{c,d})M +(|\beta|+|\la-\la_0| M \norm u_{c,d})M.$$
With our definition of $M$ this yields
$$\norm u_{c,d}\leq 2(|\alpha|+|\beta|)M.$$
Since the right hand side is independent of $d$ this implies that $u\in\mc L^2(w)$.

We have now shown that $\dim\mc D_\la=2$ whenever $\la\not\in\Lambda$.
In particular, $n_\pm=2-\dim\mc L_0$.
If $\mc L_0$ is not trivial Lemma \ref{L:2.4} shows that $\Lambda\subset\bb R$, i.e., $\dim\mc D_\la=2$ and $\Xi_\la=\emptyset$ for all non-real $\la$.
If, on the other hand, $\mc L_0=\{0\}$, we have $n_\pm=2$ and therefore two linearly independent solutions of $Ju'+qu=\la wu$ of positive finite norm exist for all non-real $\la$, even if $\la\in\Lambda\setminus\bb R$.
The only claim left to prove is that $\Xi_\la$ does not accumulate at $b$ when $\la\in\Lambda\setminus\bb R$.
If it did, Lemma \ref{P:3.3} would show that any solution of $Ju'+qu=\la wu$ vanished on $(\min{\Xi_\la},b)$ which would entail that $\dim\mc D_\la<2$, a contradiction.
\end{proof}

For certain kinds of endpoints the quantity $(v^*Ju)(x)$ tends to $0$ regardless of $u$ and $v$ in $\dom\mc T_\mx$ as $x$ approaches the endpoint.
Therefore boundary conditions cannot be posed at such endpoints or rather posing a boundary condition would be futile.
As we will prove below this happens for the endpoint $b$ if, for some non-real $\la$, either $\Xi_\la$ accumulates at $b$ or the space $\breve{\mc D}_\la$ is one-dimensional.
In this case we will say that we have the \emph{limit-point case at $b$}\index{limit-point case} or, for short, that $b$ is limit-point.
Otherwise, we say that we have the \emph{limit-circle case at $b$}\index{limit-circle case} or that $b$ is limit-circle.
Lemma \ref{L:7.1} (which is applicable since we may consider either the interval $(a,c)$ or else the interval $(c,b)$ for some $c\in (a,b)$)
shows that it does not depend on which non-real $\la$ is chosen to determine whether an endpoint is limit-point or limit-circle.
The terminology goes back to Weyl's famous 1910 paper \cite{41.0343.01}, but we alert the reader to the fact that its literal meaning applies only for $\la$ where $\Xi_\la$ is empty.
In \cite{BW} Weyl's point of view is explored further (even allowing for complex coefficients $q$ and $w$).
Of course, we have an analogous dichotomy for $a$ and, for any given problem, we have either the limit-point case or the limit-circle case at $a$.
Note that regular endpoints are limit-circle.

\begin{thm}\label{T:3.3}
If we have the limit-point case at $b$ and $([v],[g])\in T_\mx$, then for all $([u],[f])\in T_\mx$ there exists a balanced representative $u_0$ of $[u]$ such that $([u_0],[f])\in T_\mx$  and $(v^*Ju_0)^-(b)=0$.
An analogous statement holds when we have the limit-point case at $a$.
\end{thm}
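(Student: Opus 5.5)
We reduce the claim to Theorem \ref{T:3.1}, applied to a suitable modification of $(v,g)$ near $b$. Fix a non-real $\la$. Using von Neumann's relation \eqref{vNeumann}, write $([v],[g])=(v_0,g_0)+(v_+,\iu v_+)+(v_-,-\iu v_-)$ with $(v_0,g_0)\in\ov{T_\mn}$ and $v_\pm$ solutions of $Jv'+qv=\pm\iu wv$ in $\mc L^2(w)$. Representatives matter here: $(v^*Ju_0)^-(b)$ depends on the balanced representative of $[v]$ we work with. We therefore fix $v$ and write $v=v_0+v_++v_-+\ell$ with $\ell\in\mc L_0$, where $v_0$ is the representative provided by Theorem \ref{T:3.1}. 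That theorem handles the $v_0$-term against any $u$ in $\dom\mc T_\mx$: the boundary term $(v_0^*Ju)^-(b)=0$ by the symmetric (Lagrange) form. It remains to treat the pure eigen-solutions $v_\pm$ and the null solution $\ell$.

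\textbf{Case A: $\Xi_\la$ accumulates at $b$.} Take $x_1<x_2$ in $\Xi_\la$ close to $b$. Lemma \ref{P:3.3} shows that $v_\pm\chi_{(x_2,b)}$ (balanced) is again a solution. Moreover $v_\pm\chi_{(x_1,x_2)}$ lies in $\mc L_0$, and in fact vanishes unless both points are really bad. Hence, modulo elements of $\mc L_0$, the function $v_\pm$ near $b$ equals the piece beyond an arbitrarily late point of $\Xi_\la$. Accumulation then forces $v_\pm$ to agree with an element of $\mc L_0$ near $b$. We then truncate at some $x_2\in\Xi_\la$: the function $\tilde v=(v\chi_{(a,x_2)})^\#$ with $\tilde g=g\chi_{(a,x_2)}$ is in $\mc T_\mx$ and differs from $v$ near $b$ only by an element of $\mc L_0$. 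So $(v^*Ju)^-(b)$ reduces to $(\ell^*Ju)^-(b)$ for a null solution $\ell$. This term is handled by choosing the representative $u_0$ of $[u]$ exactly as in the proof of Theorem \ref{T:3.1} (Lemma \ref{L:2.3}): there we shifted $u$ by a multiple of $U(\cdot,0)e^\perp$ so that $\beta(b)=0$, which kills pairings with $\mc L_0$-type functions.

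\textbf{Case B: $\Xi_\la$ does not accumulate at $b$ and $\breve{\mc D}_\la$ is one-dimensional.} Work on $(\breve a,b)$, whose left endpoint is regular. Here the deficiency index of $\breve T_\mn$ is $1$, with one contribution coming from the regular end $\breve a$ (Lemma \ref{L:7.2}). Green's formula on $(\breve a,b)$ then says that boundary forms at $b$ vanish identically. Concretely, consider $\breve T_\mn$ extended by the separated condition at $\breve a$: the pairs whose restriction to $(\breve a,b)$ satisfies a self-adjoint condition at $\breve a$ already form a self-adjoint relation, because the dimension count gives $n_\pm=1$ and one regular-end condition suffices. Hence the bilinear form $(v^*Ju)^-(b)$ vanishes on $\breve{\mc T}_\mx$, once the representatives are adjusted in the non-definite case via Lemma \ref{L:8.4mod2}. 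To make this rigorous we use Lemma \ref{L:8.4mod1} or \ref{L:8.4mod2}, shifted by a real $\mu\notin\Lambda$ as in Theorem \ref{T:3.1}, to produce elements $w_1,w_2\in\breve{\mc T}_\mx$ that vanish near $b$ and take prescribed values at $\breve a$. With these, the boundary functional at $b$ is a form on a quotient of dimension at most $n_\pm=1$. The form $(v^*Ju)^-(b)$ is skew-hermitian, and a skew form on a one-dimensional space with $\<(v,g),(v,g)\>$-type values purely imaginary must vanish. It vanishes because an element $(r,\la r)$ with $\Im\la\ne0$ gives $(r^*Jr)^-(b)-(r^*Jr)^+(\breve a)=2\iu\Im\la\norm r^2$, which is consistent only if the $b$-term is absorbed; we check this via the dimension count.

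The main obstacle is Case B: rigorously deducing $(v^*Ju)^-(b)=0$ from $\dim\breve{\mc D}_\la=1$. The tempting argument "$n_\pm=1$ and the regular end already uses up the defect" needs care, since one must show that the regular end genuinely contributes a full defect and that nothing is left at $b$. The plan is this. Let $N=\{(r,s)\in\breve{\mc T}_\mx: r=0\text{ near }\breve a\}$ modulo $\ov{\breve T_\mn}$. The regular end contributes two dimensions to $\breve T_\mx/\ov{\breve T_\mn}$ (values at $\breve a$, via Lemma \ref{L:8.4mod1}), and this quotient has total dimension $2n_\pm=2$. Hence every class is represented by something vanishing near $b$ up to $\ov{\breve T_\mn}$. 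Applying Theorem \ref{T:3.1} to that $\ov{\breve T_\mn}$-part then yields $(v^*Ju_0)^-(b)=0$, completing the proof.
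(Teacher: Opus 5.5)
\textbf{There is a genuine gap in Case A, in how you dispose of the null-solution term.}

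You apply von Neumann's decomposition to $v$, whose representative is fixed by the hypothesis. So near $b$, $v-v_0$ coincides with some $L\in\mc L_0$ (your $\ell$ plus the tails of $v_\pm$). You propose to kill $(L^*Ju)^-(b)$ by shifting $u$ along $\mc L_0$ ``as in Lemma~\ref{L:2.3}''. That cannot work.

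When $\Lambda\neq\bb C$ and $\dim\mc L_0=1$, $\mc L_0$ is spanned by the real function $U(\cdot,\mu)e^\perp$. Hence for $\ell=cU(\cdot,\mu)e^\perp$ and $\ell'=c'U(\cdot,\mu)e^\perp$ one has $\ell^*J\ell'=\ov c c'(e^\perp)^\top Je^\perp=0$ at points of continuity, by \eqref{wronski}. Writing $u=U(\cdot,\mu)(\alpha e+\beta e^\perp)$, you get $L^*Ju=\ov c\,\alpha$. This depends only on the constant $e$-coefficient of $u$, not on the representative of $[u]$. The shift making $\beta(b)=0$ in the proof of Theorem~\ref{T:3.1} served a different purpose: it killed the pairing with $\gamma U(\cdot,0)e$, the behaviour near $b$ of the element supplied by Lemma~\ref{L:8.4mod2}, and that is not a null solution.

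Moreover, Lemma~\ref{L:2.3} is not even available in Case A. If $\Lambda\neq\bb C$, then $\Xi_\la\neq\emptyset$ for a non-real $\la$, and Lemma~\ref{L:2.4} forces $\mc L_0=\{0\}$. If $\Lambda=\bb C$, the lemma's hypotheses fail.

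Your truncation of a general $(v,g)$ at $x_2\in\Xi_\la$ is also wrong. With $\tilde g(x_2)=0$ the jump condition requires $B_-(x_2,0)v^-(x_2)=0$, which holds only at really bad points of the appropriate type. This step is not needed, though.

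\textbf{How to close the gap.} Decompose $u$, the element whose representative you are free to choose, as the paper does.
\begin{itemize}
\item Let $c=\inf\Xi_\iu$. The solution $r$ spanning $D_\iu$ may be taken to vanish on $(c,b)$, because $(r\chi_{(c,b)})^\#\in\mc L_0$.
\item Write $u=\tilde u+\alpha r+\beta\ov r$ with $([\tilde u],[\tilde f])\in\ov{T_\mn}$.
\item Put $u_0=\tilde u_0+\alpha r+\beta\ov r$, where $\tilde u_0$ is the representative from Theorem~\ref{T:3.1}.
\end{itemize}
That representative annihilates the pairing with every $(v,g)\in\mc T_\mx$ simultaneously, so no $\mc L_0$-component of $v$ ever needs separate treatment.

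Your route can also be rescued. For $\Lambda\neq\bb C$ the $L$-term is absent, since $\mc L_0=\{0\}$ as noted above. For $\Lambda=\bb C$, use the really-bad-point argument at the end of the proof of Theorem~\ref{T:3.1}. It shows that every representative of an element of $\ov{T_\mn}$, in particular every $\ell\in\mc L_0$, has vanishing boundary pairings with all of $\dom\mc T_\mx$.

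\textbf{Case B.} Your closing dimension count is the paper's argument: two elements of $\breve{\mc T}_\mx$ vanishing near $b$, with independent data at $\breve a$, spanning $\breve T_\mx/\ov{\breve T_\mn}$. However, it presupposes $\breve{\mc L}_0=\{0\}$, because Lemmas~\ref{L:7.2} and \ref{L:8.4mod1} need definiteness. If $\breve{\mc D}_\la=\breve{\mc L}_0$, then $\breve n_\pm=0$, not $1$. That subcase, where $\breve T_\mx=\ov{\breve T_\mn}$, must be treated separately. Finally, the claim that a skew-hermitian form on a one-dimensional space must vanish is false (consider $(x,y)\mapsto\iu\ov xy$), and should be dropped.
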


\begin{proof}
First consider the case where $\Xi_\iu$ accumulates at $b$, let $c=\inf\Xi_\iu$, and consider a solution $r$ of $Ju'+qu=\iu wu$.
From Lemma \ref{P:3.3} we know that $(r\chi_{(c,b)})^\#$ is in $\mc L_0$.
If $a<c$ we must have $B_-(c,\iu)r^-(c)=0$ allowing for only one linearly independent solution on $(a,c)$.
Thus, if $r$ has finite positive norm, we have $n_+=1$.
Otherwise, or when $a=c$, we have $n_+=0$.
If $n_\pm=0$ our claim follows from Theorem \ref{T:3.1}.
Therefore we may assume now that $n_\pm=1$ so $D_\iu$ and $D_{-\iu}$ are one-dimensional.
Let $([r],\iu [r])$ be a basis vector of $D_\iu$, then $([\ov r],-\iu[\ov r])$ is a basis vector of $D_{-\iu}$ and we may assume $r=0$ on $(c,b)$.
Therefore, using von Neumann's relation \eqref{vNeumann}, there is a pair $(\tilde u,\tilde f)\in\mc T_\mx$ such that $([\tilde u],[\tilde f])\in\ov{T_\mn}$ and
$$(u,f)=(\tilde u+\alpha r+\beta\ov r,\tilde f+\alpha \iu r-\beta \iu \ov r)$$
where $\alpha$ and $\beta$ are appropriate numbers.
By Theorem \ref{T:3.1} there is a balanced function $\tilde u_0$ such that $\tilde u-\tilde u_0=\ell_0\in\mc L_0$ and $(v^*J\tilde u_0)^-(b)=0$.
Let $u_0=u-\ell_0$.
Then $([u_0],[f])\in T_\mx$  and $(v^*Ju_0)^-(b)=(v^*J\tilde u_0)^-(b)=0$.

Next we consider the case where $\Xi_\iu$ does not accumulate at $b$.
Let $\breve a=\max\Xi_\iu$ or, if $\Xi_\iu=\emptyset$, any point in $(a,b)$.
Denote the restriction of $(v,g)$ and $(u,f)$ to $(\breve a,b)$ by $(\breve v, \breve g)$ and $(\breve u, \breve f)$, respectively.
Of course, we have $(v^*Ju)^-(b)=(\breve v^*J\breve u)^-(b)$ and we consider now the problem on $(\breve a,b)$.
Since $\breve{\mc D}_\iu=\{r: (r,\iu r)\in\breve{\mc T}_\mx\}$ is one-dimensional we have that $\breve n_\pm$ is either $0$ or $1$.
In the former case $\dom \breve{\mc D}_\iu=\breve{\mc L}_0$, $\breve T_\mx=\ov{\breve T_\mn}$ and the claim follows from Theorem~\ref{T:3.1}.
The latter case, where $\breve{\mc L}_0=\{0\}$ and hence $\breve{\mathbf{B}}=\bb C^2$, needs some further attention.
Choose $\mu\in\bb R\setminus\breve\Lambda$ and let $r_1$ and $r_2$ be solutions of $Ju'+\breve qu=\mu \breve wu$ satisfying the initial conditions $r_1^+(\breve{a})=(1,0)^\top$ and $r_2^+(\breve a)=(0,1)^\top$, respectively.
By Lemma \ref{L:8.4mod1} there are elements $(\tilde r_j,\tilde h_j)\in\breve{\mc T}_\mx$, $j=1,2$, such that
$\tilde r_j=r_j$ near $\breve a$ and $\tilde r_j=0$ near $b$.
Since $(\tilde r_2^*J\tilde r_1)^+(\breve a)=1$ Theorem \ref{T:3.1} shows that neither $([\tilde r_1],[\tilde h_1])$ nor $([\tilde r_2],[\tilde h_2])$ is in $\ov{\breve{T}_\mn}$.
Therefore, by von Neumann's relation \eqref{vNeumann}
$$(\breve u,\breve f)=(\tilde u+\alpha\tilde r_1+\beta\tilde r_2,\tilde f+\alpha\tilde h_1+\beta\tilde h_2)$$
where $([\tilde u],[\tilde f])\in\ov{\breve{T}_\mn}$.
Lemma \ref{L:8.4mod1} is also used to find an element $(\tilde v,\tilde g)\in \breve{\mc T}_\mx$ such that $\tilde v=0$ near $\breve a$ and $\tilde v=\breve v=v$ near $b$.
Then, using Lagrange's identity \eqref{Lagrange},
\[(v^*Ju)^-(b)=(\breve v^*J\breve u)^-(b)=(\tilde v^*J\tilde u)^-(b)-(\tilde v^*J\tilde u)^+(a)=\scp{\tilde v,\tilde f}-\scp{\tilde g,\tilde u}=0.\qedhere\]
\end{proof}

We now use all the information gathered to investigate Weyl's alternative under various circumstances.

\subsection{The definite case, no really bad points are present}\label{S:3.1}
Here we have a trichotomy which takes the same form as in the classical case.

\begin{thm}\label{T:8.7}
Suppose $\Lambda\neq\bb C$ and $\mc L_0=\{0\}$.
Then the following statements are true.
  \begin{enumerate}
    \item $n_+=n_-=0$ if and only if we have the limit-point case at both $a$ and $b$.
    In this situation $T_\mx$ is self-adjoint.
    \item $n_+=n_-=1$ if and only if we have the limit-point case at one of $a$ and $b$ and the limit-circle case at the other.
    Self-adjoint restrictions are given by posing a separated boundary condition at the limit-circle end.
    \item $n_+=n_-=2$ if and only if we have the limit-circle case at both $a$ and $b$.
  \end{enumerate}
\end{thm}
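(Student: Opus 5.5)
The plan is to show that, under $\Lambda\neq\bb C$ and $\mc L_0=\{0\}$, the deficiency index satisfies $n_\pm=\ell(a)+\ell(b)$. Here $\ell(a)$ and $\ell(b)$ take the value $1$ at a limit-circle end and $0$ at a limit-point end. Since the three cases in the statement are mutually exclusive and exhaustive, the three equivalences follow from this formula. The self-adjointness claim in (1) is then automatic from \eqref{vNeumann}.

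Fix a non-real $\la$. Because $\Lambda$ is countable, I may first shift as at the end of Section \ref{S:2.2}, using \eqref{20230425.1}, so that $0\notin\Lambda$. I would then choose a point $c\in(a,b)$ of continuity and split the interval into $(a,c)$ and $(c,b)$, noting that $c$ is a regular endpoint of each piece. The first step is to understand each half.

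\begin{itemize}
\item If $b$ is limit-circle, the restricted problem on $(\breve a,b)$ has a two-dimensional $\breve{\mc D}_\la$. By Lemma \ref{L:7.1}, all solutions of $Ju'+qu=\la wu$ are then square integrable near $b$.
\item If $b$ is limit-point, then either $\Xi_\la$ accumulates at $b$, or near $b$ only a one-dimensional space of square-integrable solutions exists. Lemma \ref{L:7.2} guarantees that this space is at least one-dimensional.
\end{itemize}

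For the lower bound on $n_\pm$, I would use the boundary form $\Phi((u,f),(v,g))=\<v,f\>-\<g,u\>$ on $T_\mx$. Its rank, viewed on the quotient $T_\mx/\ov{T_\mn}\cong D_\iu\oplus D_{-\iu}$, equals $2n_\pm$. Suppose $b$ is limit-circle. Using Lemma \ref{L:8.4mod1} applied near $b$ with $\mu\in\bb R\setminus\Lambda$, I would build elements $(v_1,g_1),(v_2,g_2)\in\mc T_\mx$ that vanish near $a$ and equal the solutions $r_1,r_2$ of $Ju'+qu=\mu wu$ near $b$. These exist because every solution has finite norm near $b$. Since $r_2^*Jr_1$ is a nonzero constant by \eqref{wronski}, these two elements contribute a nondegenerate $2\times2$ block to $\Phi$. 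The same construction works at $a$ when $a$ is limit-circle, and elements supported near different ends are $\Phi$-orthogonal. This gives $n_\pm\geq \ell(a)+\ell(b)$.

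For the upper bound, Theorem \ref{T:3.3} shows that at a limit-point end, $(v^*Ju)^\mp$ can be made to vanish after a change of representative. Because $\mc L_0=\{0\}$, the representative is unique, so the boundary term simply vanishes there. At a limit-circle end, I would show that the boundary form factors through the two-dimensional space of values pairing $u$ against $r_1,r_2$. Concretely, I would write $u=U(\cdot,\mu)(\alpha(x),\beta(x))^\top$ and show that $\alpha$ and $\beta$ have limits, with $(v^*Ju)(x)$ determined by those limits, using Lagrange's identity \eqref{pLagrange} and the variation of constants formula. Hence the rank of $\Phi$ is at most $2(\ell(a)+\ell(b))$. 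Combining the two bounds gives $n_\pm=\ell(a)+\ell(b)$.

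For the separated boundary conditions in (2), take $a$ limit-point and $b$ limit-circle. Then in \eqref{bc} the term at $a$ always vanishes, so the single condition $(g_1^*Ju)^-(b)=0$ is separated. I expect the main obstacle to be the upper bound at a limit-circle end, namely showing that the boundary values there span at most two dimensions even when points of $\Xi_\la$ accumulate. I would handle it by working with a real $\mu\notin\Lambda$, for which there are no bad points, and transferring the conclusion via \eqref{20230425.1}.
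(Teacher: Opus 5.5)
Your proof is correct, but it takes a different route from the paper's.

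\textbf{The paper's route.} The paper does not prove an additive formula; it argues case by case.
\begin{enumerate}
\item If both ends are limit-point, Theorems~\ref{T:3.3} and \ref{T:3.1} give $\ov{T_\mn}=T_\mx$. Conversely, a limit-circle end forces $n_\pm>0$ by a slight modification of Lemma~\ref{L:7.2}.
\item For (3) it counts square-integrable solutions. Two limit-circle ends put all solutions of $Ju'+qu=\pm\iu wu$ in $\mc L^2(w)$, while a limit-point end allows at most one.
\item The first part of (2) then follows by elimination. The separated condition comes from Theorem~\ref{T:3.3}, exactly as in your argument.
\end{enumerate}

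\textbf{Your route.} You compute the rank of the boundary form on $T_\mx$ directly.
\begin{enumerate}
\item The lower bound is a strengthened version of the construction in Lemma~\ref{L:7.2}. Via Lemma~\ref{L:8.4mod1}, with a real $\mu\notin\Lambda$ and a symplectic fundamental system, each limit-circle end contributes a Gram block $\pm J$. Blocks from opposite ends are orthogonal because the constructed elements vanish near the other end.
\item The upper bound uses Theorem~\ref{T:3.3} at limit-point ends, where the unique representative makes the boundary term vanish. At limit-circle ends you use the factorization $(v^*Ju)^-(b)=\vec v(b)^*J\vec u(b)$. This is exactly the quasi-boundary-value mechanism of Lemma~\ref{L:4.1}, which the paper only introduces in Section~\ref{S:4}.
\end{enumerate}

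\textbf{What each buys.} Your approach yields the single formula $n_\pm=\ell(a)+\ell(b)$, and with it all three equivalences at once. It never counts solutions at a non-real spectral parameter, so the possibility $\pm\iu\in\Lambda$ never arises. It also already sets up the injectivity behind Lemma~\ref{qbv}. The paper's proof is shorter because it uses the solution-counting meaning of limit-point/limit-circle directly and defers quasi-boundary values.

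\textbf{Minor remarks.}
\begin{itemize}
\item At a limit-circle end, the existence of $\lim (r_j^*Ju)(x)$ already follows from Lagrange's identity \eqref{pLagrange}, applied to the very elements you built for the lower bound. No variation-of-constants argument is needed there.
\item The preliminary shift making $0\notin\Lambda$ and the splitting at $c$ are unnecessary.
\item The ``main obstacle'' you anticipate does not occur. At a limit-circle end, $\Xi_\la$ cannot accumulate for non-real $\la$. For your real $\mu\notin\Lambda$ there are no bad points at all, so no transfer via \eqref{20230425.1} is required.
\end{itemize}
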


\begin{proof}
Suppose we have the limit-point case at both $a$ and $b$.
Then Theorems \ref{T:3.3} and \ref{T:3.1} give $\ov{T_\mn}=T_\mx$.
Hence $T_\mx$ is self-adjoint and the deficiency indices of $T_\mn$ are zero.
Conversely, if at least one of $a$ and $b$ is limit-circle and since $\mc L_0$ is trivial, a slight modification of Lemma \ref{L:7.2} gives that $n_\pm>0$.
This proves (1).

Now suppose that we have the limit-circle case at both $a$ and $b$.
Then every solution of $Ju'+qu=\pm\iu w u$  is in $\mc L^2(w)$, i.e., $n_\pm=2$.
Conversely, if at least one of $a$ and $b$ is limit-point we must have $n_\pm<2$.
This proves (3) and thus also the first statement of (2).

For the second statement of (2) assume that the boundary condition is given by $(v,g)\in D_\iu\oplus D_{-\iu}$ and assume that $a$ is limit-circle and $b$ is limit-point (the other case being treated similarly).
Since $(g,-v)$ is also in $D_\iu\oplus D_{-\iu}$ Theorem \ref{T:3.3} gives $(g^*Ju)^-(b)=0$ for any $(u,f)\in T_\mx$ so that the boundary condition posed is $(g^*Ju)^+(a)=0$.
\end{proof}

\subsection{The non-definite case, no really bad points are present}\label{S:3.2}
Our previous considerations give us immediately the following result.
\begin{thm}\label{T:8.7L0}
Suppose $\Lambda\neq\bb C$ and $\dim\mc L_0=1$.
Then $n_+=n_-=0$ if and only if we have the limit-point case at $a$ or $b$ and $n_+=n_-=1$ if and only if we have the limit-circle case at both $a$ and $b$.
\end{thm}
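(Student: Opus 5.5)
Both directions reduce to counting dimensions, using facts already established. With $\dim\mc L_0=1$, Lemma~\ref{L:2.4} gives $\Lambda\subset\bb R$. Hence for non-real $\la$ the set $\Xi_\la$ is empty, and the solution space of $Ju'+qu=\la wu$ is two-dimensional and contains the one-dimensional space $\mc L_0$. So $\dim\mc D_\la\in\{1,2\}$ and
$$n_\pm=\dim D_{\pm\iu}=\dim\mc D_{\pm\iu}-1\in\{0,1\}.$$
It therefore suffices to prove one thing: $n_\pm=1$ if and only if both endpoints are limit-circle. The statement about $n_\pm=0$ then follows by complement, since ``not both limit-circle'' means ``limit-point at $a$ or at $b$''.

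Suppose first that both $a$ and $b$ are limit-circle. Fix a point of continuity $c\in(a,b)$. Since $\Xi_\iu=\emptyset$, the restricted problems on $(a,c)$ and $(c,b)$ each have $c$ as a regular endpoint. The limit-circle hypothesis says $\breve{\mc D}_\iu$ is two-dimensional on each piece, and Lemma~\ref{L:7.1} shows this is independent of which non-real $\la$ is used. Every solution of $Ju'+qu=\iu wu$ on $(a,b)$ restricts to solutions on $(a,c)$ and on $(c,b)$, and all of these have finite norm. Hence every solution on $(a,b)$ lies in $\mc L^2(w)$, so $\dim\mc D_\iu=2$ and $n_\pm=1$.

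For the converse, suppose $b$ is limit-point; the case of $a$ is symmetric. I want $\ov{T_\mn}=T_\mx$, which gives $n_\pm=0$ by von Neumann's relation \eqref{vNeumann}. Since $\Xi_\iu=\emptyset$, the limit-point case at $b$ means that $\breve{\mc D}_\iu$ on $(c,b)$ is one-dimensional. It must then be spanned by the restriction of the nontrivial element of $\mc L_0$. Consequently every solution on $(a,b)$ of finite norm agrees near $b$, up to this multiple, with an element of $\mc L_0$. So $\dim\mc D_\iu\le 1+0$ once we also know that solutions of finite norm must be forced to lie in $\mc L_0$.

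That last step is the main obstacle: I must rule out a solution $r\notin\mc L_0$ that has finite norm near $b$. If such an $r$ existed, then $r$ and a basis element $\ell_0$ of $\mc L_0$ would be independent solutions on $(c,b)$, both of finite norm there. This would make $\breve{\mc D}_\iu$ two-dimensional, a contradiction. So every solution of finite norm on $(a,b)$ lies in $\mc L_0$, which gives $\dim\mc D_\iu=1$ and $n_\pm=0$.

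Alternatively, I could argue through Theorem~\ref{T:3.3} at $b$ and Lemma~\ref{L:2.3}. Lemma~\ref{L:2.3} shows that boundary terms $(v^*Ju)^\pm$ are determined by the scalar function $\beta$. Choosing the representative so that $\beta(b)=0$ would then kill the contribution at $a$ via Lagrange's identity, and Theorem~\ref{T:3.1} would give $([u],[f])\in\ov{T_\mn}$. I would use the dimension count as the primary argument and keep this route as a cross-check.
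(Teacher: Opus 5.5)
Your dimension count is correct and is essentially what the paper means by ``our previous considerations give us immediately'' this result. Lemma~\ref{L:2.4} gives $\Lambda\subset\bb R$, so $\Xi_{\pm\iu}=\emptyset$ and $n_\pm=\dim\mc D_{\pm\iu}-1\in\{0,1\}$. The limit-circle/limit-point dichotomy at each end then decides whether $\dim\mc D_{\pm\iu}$ is $2$ or $1$, together with unique continuation across $(a,b)$, which you use tacitly when you call the restrictions of $r$ and $\ell_0$ independent on $(c,b)$.

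Discard the suggested cross-check via Lemma~\ref{L:2.3}. That lemma presupposes the pair already lies in $\ov{T_\mn}$, and Lagrange's identity alone cannot remove the boundary term at $a$ for an arbitrary element of $T_\mx$.
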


It was shown in Theorem 7.3 of \cite{MR4047968} that $T_\mx=\{0\}\times L^2(w)$ when $\dim\mc L_0=1$, $n_\pm=0$, and $\ker\Delta_w(x)\subset \ker\Delta_w(x)J^{-1}\Delta_q(x)$ for all $x\in (a,b)$.
This happens, in particular, when $w$ is a continuous measure but also when $\Delta_w(x)$ is invertible whenever it is different from $0$.
However, if $\Delta_w(x)$ has rank $1$ for some $x\in (a,b)$ we may have a different situation. as may be seen in the example where $(a,b)=(0,\infty)$, $J=\sm{0&-1\\ 1&0}$, $q=\sm{0&0\\ 0&2}\sum_{k=1}^\infty(-1)^k\delta_k$, and $w=\sm{2&0\\ 0&0}\sum_{k=1}^\infty \delta_k$.
In this case $T_\mx$ is, in fact, a self-adjoint infinite-dimensional operator.

\subsection{Really bad points are present}\label{S:3.3}
If we have the limit-point case at both $a$ and $b$, it is still the case that $n_+=n_-=0$ and that $T_\mx$ is self-adjoint.
If, however, we have the limit-point case only at one endpoint but not the other we can only conclude that $n_\pm\leq 1$.
Similarly, knowing only that we have the limit-circle case at both endpoints gives no information on the deficiency indices.
The additional information required concerns the structure of $\mc L_0$.

Assume that $a$ is a limit-circle endpoint and that $b$ is a limit-point endpoint.
There is then a smallest really bad point which we call $x_1$.
Pick a non-real number $\la$ such that $B_\pm(x,\la)$ are invertible for all $x\in(a,x_1)$ (recall that all but countable many of the non-real $\la$ are of this kind) and let $v_0$ be a non-trivial solution of the equation $Ju'+qu=\la wu$ which satisfies the boundary condition $B_-(x_1,\la)u^-(x_1)=0$ and vanishes on $(x_1,b)$.
If $(u,\la u)\in\mc T_\mx$ let $u_0=(u \chi_{(a,x_1)})^\#$ and $u_1=(u \chi_{(x_1,b)})^\#$.
Using that $u_1=0$ near $a$, Theorem \ref{T:3.3} and Theorem \ref{T:3.1} show that $([u_1],\la [u_1])\in\ov{T_\mn}$.
Since $\la$ cannot be an eigenvalue of $\ov{T_\mn}$ we have that $u_1\in\mc L_0$.
Therefore $\norm u\neq0$ if and only if $\norm{u_0}\neq0$.
But $u_0$ is a constant multiple of $v_0$.
Thus, if $v_0$ has norm $0$, then $\dom\mc D_\la\subset\mc L_0$, $D_\la$ is trivial, and $n_\pm=0$.
Otherwise, if $v_0$ has positive norm, then $n_\pm=1$.

The situation is analogous when $b$ is a limit-circle endpoint and really bad points accumulate at $a$.

Finally, if both $a$ and $b$ are limit-circle endpoints we denote the smallest and largest really bad point by $x_1$ and $x_N$, respectively.
As before we choose a non-trivial solution $v_0$ of $Ju'+qu=\la wu$ which vanishes on $(x_1,b)$ and satisfies the boundary condition $B_-(x_1,\la)u^-(x_1)=0$.
Similarly, $v_N$ is a non-trivial solution of $Ju'+qu=\la wu$ vanishing on $(a,x_N)$ and satisfying the boundary condition $B_+(x_N,\la)u^+(x_N)=0$.
Now we have the following trichotomy:
\begin{enumerate}
\item $n_\pm=2$ if and only if both $v_0$ and $v_N$ have positive norm.
\item $n_\pm=1$ if and only if precisely one of $v_0$ and $v_N$ has positive norm (and the other norm $0$).
\item $n_\pm=0$ if and only if both $v_0$ and $v_N$ have norm $0$.
\end{enumerate}

Examples show that all these cases may actually occur.

\section{Boundary conditions}\label{S:4}
When $n_\pm=0$ the relation $T_\mx$ is self-adjoint and no boundary conditions can be (or need to be) posed.
If, however $n_\pm>0$ the relation $T_\mx$ has infinitely many distinct self-adjoint restrictions determined by boundary conditions as stated in Section \ref{S:2.2}:
any self-adjoint restriction of $T_\mx$ is determined by $n_\pm$ linearly independent elements $(v_1,g_1), ..., (v_{n_\pm},g_{n_\pm})\in D_\iu\oplus D_{-\iu}$ satisfying $(g_j^*Jg_k)^-(b)-(g_j^*Jg_k)^+(a)=0$, $j,k=1,..., n_\pm$.
Conversely, if these conditions hold, then
$$T=\{([u],[f])\in T_\mx: \forall j=1,...,n_\pm: (g_j^*Ju)^-(b)-(g_j^*Ju)^+(a)=0\}$$
is a self-adjoint restriction of $T_\mx$.
The purpose of this section is to give a more classical representation of the boundary conditions.

If $a$ is a limit-circle endpoint, then really bad points cannot accumulate at $a$.
Therefore there is a real number $\mu$ and an interval $(a,\breve b)$ such that $B_\pm(x,\mu)$ are invertible for all $x\in(a,\breve b)$.
For the equation $Ju'+qu=\mu wu$ posed on this interval we choose a balanced fundamental matrix $U_\ell(\cdot,\mu)$ which is symplectic at some point of continuity in $(a,\breve b)$ (and hence at all such points).
For $u\in\dom\mc T_\mx$ and $x\in(a,\breve b)$ we define now $\cev u(x)=U_\ell(x,\mu)^*Ju(x)$.
Since $a$ is limit-circle it follows from Lagrange's identity \eqref{pLagrange} that $\cev u(a)=\lim_{x\downarrow a}\cev u(x)$ exists.
Similarly, if $b$ is limit-circle, we have a balanced fundamental matrix $U_r(\cdot,\mu)$, symplectic at points of continuity, on some interval $(\breve a,b)$ and using it we define a map $\vec u=U_r(\cdot,\mu)^*Ju$ on $(\breve a,b)$ for any $u\in\dom\mc T_\mx$.
Again $\vec u(b)=\lim_{x\uparrow b}\vec u(x)$ exists.
We will call the vectors $\cev u(a)$ and $\vec u(b)$ quasi-boundary values of $u\in\dom\mc T_\mx$.
If both $a$ and $b$ are limit-circle endpoints, it is, of course, possible to choose a common $\mu$.
If $\Xi_\mu=\emptyset$ we may even choose $U_r=U_\ell$ in which case we have $\vec u=\cev u$ and will then write $\cevec u$ for either.

We emphasize that, if $a$ is a regular endpoint, the canonical choice for $U_\ell(\cdot,\mu)$ is the one satisfying $U_\ell(a,\mu)=J$ in which case the quasi-boundary values are equal to the actual boundary values.
Similarly, if $b$ is a regular endpoint, we choose $U_r(\cdot,\mu)$ so that $U_r(b,\mu)=J$ and hence $\vec u(b)=u(b)$.

We prove now a key lemma.
\begin{lem}\label{L:4.1}
Suppose $Ju'+qu=wf$ and $Jv'+qv=wg$ on $(a,\breve b)$.
Then $v(x)^*Ju(x)=\cev v(x)^*J\cev u(x)$ for all $x\in(a,\breve b)$.
Similarly, if $Ju'+qu=wf$ and $Jv'+qv=wg$ on $(\breve a,b)$, then $v(x)^*Ju(x)=\vec v(x)^*J\vec u(x)$ when $x\in(\breve a,b)$.
\end{lem}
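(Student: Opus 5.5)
The claim is an algebraic identity at each point. It uses only that $U_\ell(x,\mu)$ is symplectic for every $x\in(a,\breve b)$; the hypotheses that $u$ and $v$ solve the equations only ensure that $\cev u$, $\cev v$ are defined. Write $U=U_\ell(x,\mu)$. Since $\mu$ is real and $B_\pm(x,\mu)$ are invertible on $(a,\breve b)$, the Wronskian relation \eqref{wronski} with $\la=\ol=\mu$ applies. Because $U_\ell$ is symplectic at one point of continuity, it gives $(U^\pm)^*JU^\pm=J$ at every $x\in(a,\breve b)$, that is, $U^+(x)$ and $U^-(x)$ are symplectic for all $x$.

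For $x$ a point of continuity, $U=U^+=U^-$ is symplectic. From $U^*JU=J$ we get $U^{-1}=-JU^*J$, using $J^{-1}=-J$. Hence $UJU^*=UJ(-J^{-1}U^{-1}J^{-1})\cdot(-1)$, and a short manipulation yields $UJU^*=J$, so $U^*$ is symplectic as well; this is the $2\times2$ statement that $S$ symplectic implies $S^*$ symplectic. Also $J^*=-J$. Then
$$\cev v^*J\cev u=(U^*Jv)^*J(U^*Ju)=v^*J^*UJU^*Ju=v^*J^*JJu=v^*Ju,$$
since $J^*J=\id$.

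The main obstacle is the points of discontinuity, where the functions are only balanced. At such points $\cev u(x)=U(x)^*Ju(x)$ with $U=(U^++U^-)/2$ and $u=(u^++u^-)/2$, and $U^\#$ need not be symplectic. I would first check the identity at the one-sided limits: by the computation above, $(\cev v^\pm)^*J\cev u^\pm=(v^\pm)^*Ju^\pm$.

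For the balanced values there are two routes. The first is to read the definition of $\cev u$ as $\cev u^\pm=(U^\pm)^*Ju^\pm$ balanced, and then use the jump relations $B_+u^+-B_-u^-=\Delta_w f$ together with $B_+U^+=B_-U^-$ to relate the cross terms.

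The second, cleaner route is to show that $U^\#$ is itself symplectic. Set $R=(U^-)^{-1}U^+$, so $U^+=U^-R$ with $R$ real of determinant one. The jump condition gives $R=B_+^{-1}B_-$. Since $B_+=-B_-^\top$, the definition of $B_\pm$ shows $\det B_+=\det B_-$, and one computes $\det\bigl(\id+R\bigr)=\det(B_++B_-)/\det B_+=4/\det B_+$. For a real $2\times2$ matrix $M$, $M^*JM=(\det M)J$, so $U^\#$ is symplectic exactly when $\det\bigl((\id+R)/2\bigr)=1$, that is, $\det B_+(x,\mu)=1$. This in turn follows from $\det(J+\tfrac12 H)=1+\tfrac14\det H$, where $H=\Delta_q-\mu\Delta_w$.

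This last point needs care: it holds only if $\det H=0$, which is not automatic. If it fails, I would fall back on the first route: expand $v^*Ju$ at $x$ as a combination of $(v^\pm)^*Ju^\pm$, and use $J$-symmetry of the jump to show the corresponding expansion of $\cev v^*J\cev u$ agrees. The $b$-end statement is identical, with $U_r$ in place of $U_\ell$.
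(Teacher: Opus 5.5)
At points of continuity your argument is the paper's: \eqref{wronski} with $\la=\mu$ makes $U_\ell(x,\mu)$ symplectic, so $UJU^*=J$ (the paper writes this as $JUJU^*J=-J$). Sandwiching $UJU^*$ between $v(x)^*J^*$ and $Ju(x)$ then gives the identity. Your remark that the same computation yields $(\cev v^*J\cev u)^\pm=(v^*Ju)^\pm$ at every $x$ is also correct and worth keeping.

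The gap is the fallback at the end. No route can succeed at a discontinuity where $\det(\Delta_q(x)-\mu\Delta_w(x))\neq0$, because the identity is false there. Your second-route computation already shows this, apart from a harmless slip: $R=(U^-)^{-1}U^+=(U^-)^{-1}B_+^{-1}B_-U^-$ is only similar to $B_+^{-1}B_-$, which does not change the determinant. Explicitly, the jump relation gives $U^\#(x)=B_+(x,\mu)^{-1}JU^-(x)$, hence $U^\#J(U^\#)^*=J/\det B_+(x,\mu)$ and
\[
\cev v(x)^*J\cev u(x)=\frac{v(x)^*Ju(x)}{\det B_+(x,\mu)} .
\]
Here $u(x)$ and $v(x)$ range over all of $\bb C^2$; take, e.g., $u=U_\ell c$. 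For instance, let $q=2\id\,\delta_0$ and $w=\id\,dx$ on $(-1,1)$, with $\mu=0$. Then $U_\ell=\id$ for $x<0$ and $U_\ell=J$ for $x>0$. For $u=U_\ell(1,0)^\top$ and $v=U_\ell(0,1)^\top$ you get $v(0)^*Ju(0)=\frac12$ but $\cev v(0)^*J\cev u(0)=\frac14$. Reading $\cev u(0)$ as the balanced value of $U^*Ju$, as in your first route, gives $1$ instead, so neither interpretation rescues the claim.

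The correct conclusion is that the lemma holds at points of continuity, and for one-sided limits at every $x$. Your argument already proves exactly this, and it is all the paper's proof establishes (it says ``for points of continuity''). It also suffices for every later use: the quasi-boundary values and the terms $(v^*Ju)^+(a)$, $(v^*Ju)^-(b)$ are limits that can be taken through points of continuity.
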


\begin{proof}
Let $U=U_\ell$ or $U=U_r$.
Then multiply identity \eqref{wronski} for $\la=\mu\in\bb R$ by $JU(x,\mu)J$ on the left and by $U(x,\mu)^{-1}$ on the right to obtain the identity $JU(x,\mu)JU(x,\mu)^*J=-J$ for points of continuity.
Multiplying this new identity with $v(x)^*$ on the left and with $u(x)$ on the right proves our claim.
\end{proof}

\subsection{Deficiency indices are \texorpdfstring{$\mathbf{2}$}{2}}\label{S:4.1}
Suppose $T$ is a self-adjoint restriction of $T_\mx$ determined by $(v_1,g_1),(v_2,g_2)\in D_\iu\oplus D_{-\iu}$, i.e.,
\begin{equation}\label{saT}
T=\{([u],[f])\in T_\mx: (g_j^*Ju)^-(b)-(g_j^*Ju)^+(a)=0 \text{ for $j=1,2$}\}.
\end{equation}

Since $n_\pm=2$ our results in Sections \ref{S:3.1} -- \ref{S:3.3} show that $a$ and $b$ are limit-circle endpoints.
If there are no very bad points $\mc L_0$ must be trivial and there is then a unique $u\in[u]$ satisfying $Ju'+qu=wf$ for any element $([u],[f])\in T_\mx$.
This allows us to uniquely define quasi-boundary values $\cev u(a)$ and $\vec u(b)$ for any $u\in\dom T_\mx$.
If there are really bad points these cannot accumulate at either $a$ or $b$.
According to Section \ref{S:3.3} the restrictions of $u$ to the first and last intervals are uniquely determined for any element $([u],[f])\in T_\mx$ so that again quasi-boundary values are well-defined.
Using Lemma \ref{L:4.1} we may now write
$$T=\{([u],[f])\in T_\mx: A\sm{\cev u(a)\\ \vec u(b)}=0\}$$
where
\begin{equation}\label{aaa}
A=\bsm{\cev g_1(a)^*&\vec g_1(b)^*\\ \cev g_2(a)^*&\vec g_2(b)^*}\bb J\in\bb C^{2\times 4}
\end{equation}
and $\mathbb{J}=\sm{-J&0\\0&J}$.

We now need the following lemma.
\begin{lem}\label{qbv}
The map
\[\mc B:D_\iu\oplus D_{-\iu}\to\mathbb{C}^4:(v,g)\mapsto\begin{bmatrix}\cev{v}(a)\\ \vec{v}(b)\end{bmatrix}\]
is a linear bijection.
\end{lem}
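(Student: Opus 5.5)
Since $n_\pm=2$, the space $D_\iu\oplus D_{-\iu}$ has dimension $4$, the same as $\bb C^4$. It therefore suffices to show that $\mc B$ is injective; linearity is clear. Well-definedness has already been discussed before \eqref{aaa}: the restrictions of a representative to $(a,\breve b)$ and $(\breve a,b)$ are uniquely determined by the class, so the quasi-boundary values depend only on the class.

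To prove injectivity, suppose $(v,g)\in D_\iu\oplus D_{-\iu}$ with $\cev v(a)=0$ and $\vec v(b)=0$. I will show that $([v],[g])\in\ov{T_\mn}$. Because von Neumann's relation \eqref{vNeumann} is an orthogonal direct sum, $\ov{T_\mn}\cap(D_\iu\oplus D_{-\iu})=\{0\}$, and this forces $(v,g)=0$.

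By Theorem \ref{T:3.1}, it is enough to show $(u^*Jv)^+(a)=(u^*Jv)^-(b)=0$ for every $(u,f)\in\mc T_\mx$, with $v$ the canonical representative. Lemma \ref{L:4.1} gives, on $(a,\breve b)$,
\[u(x)^*Jv(x)=\cev u(x)^*J\cev v(x).\]
As $x\downarrow a$, the limit $\cev u(a)$ exists (as explained before the lemma), and $\cev v(x)\to\cev v(a)=0$. Hence the product tends to $0$, at least along points of continuity. Since the one-sided limit $(u^*Jv)^+(a)$ is known to exist by \eqref{pLagrange}, it equals this limit, namely $0$. The argument at $b$ is identical, using $U_r$.

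One subtlety is that Lemma \ref{L:4.1} needs both functions to solve the inhomogeneous equation on $(a,\breve b)$. This holds because restrictions of elements of $\mc T_\mx$ do so. Another is that the representative chosen in Theorem \ref{T:3.1} may differ from $v$ by an element of $\mc L_0$. I plan to handle this by noting that, near the endpoints, representatives agree: there $\mc L_0$ restricts trivially, as in the discussion of Section \ref{S:3.3} and the definite case.

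I expect the main obstacle to be this bookkeeping about representatives and really bad points. I will need to check that the quasi-boundary values of an element of $\mc L_0$ vanish, or that $\mc L_0$ is trivial on the end intervals, whenever $n_\pm=2$. I would first argue that $n_\pm=2$ forces $\mc L_0$ to be trivial on the initial and final intervals, using Theorem \ref{T:8.7L0} and the trichotomy in Section \ref{S:3.3}. Surjectivity then follows from the dimension count.
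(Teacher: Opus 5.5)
Your proposal is correct and follows the paper's proof. You show $\ker\mc B=\{0\}$ by combining Lemma~\ref{L:4.1} with Theorem~\ref{T:3.1} to place $([v],[g])$ in $\ov{T_\mn}$, which meets $D_\iu\oplus D_{-\iu}$ only in $0$, and then get bijectivity from $\dim(D_\iu\oplus D_{-\iu})=4$. Your added care about limits along points of continuity and about representatives near the endpoints just makes explicit what the paper leaves implicit.
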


\begin{proof}
If $\cev v(a)=0$, then $(u^*Jv)^+(a)=\cev{u}(a)^*J\cev{v}(a)=0$ for any $(u,f)\in \mc T_{\mx}$.
Similarly, if $\vec v(b)=0$ we get $(u^*Jv)^-(b)=0$ whenever $(u,f)\in \mc T_{\mx}$.
According to Theorem \ref{T:3.1}, it follows that $(v,g)\in\overline{T_\mn} $ and hence $(v,g)=0$, i.e., $\ker\mc B=\{0\}$.
\end{proof}

The lemma shows that, since $(v_1,g_1)$ and $(v_2,g_2)$ are linearly independent, the matrix $A$ in \eqref{aaa} has rank $2$.
The condition that $(g_j^*Jg_k)^-(b)-(g_j^*Jg_k)^+(a)=0$ for $j,k=1,2$ implies that $A\bb JA^*=0$.

Conversely, given a $2\times 4$-matrix $A$ of rank $2$ satisfying $A\bb JA^*=0$ as well as maps $u\mapsto \cev u$ and $u\mapsto \vec u$ as described above, one can construct a self-adjoint restriction of $T_\mx$ as we prove next.
Lemma \ref{qbv} shows that there are elements $g_1, g_2\in\dom(D_\iu\oplus D_{-\iu})$ with its quasi-boundary values given by the columns of the matrix
$$\bb J A^*=\bsm{\cev g_1(a)&\cev g_2(a)\\ \vec g_1(b)&\vec g_2(b)},$$
respectively.
Since $\ran(D_\iu\oplus D_{-\iu})=\dom(D_\iu\oplus D_{-\iu})$ we have therefore two elements $(v_1,g_1)$ and $(v_2,g_2)\in D_\iu\oplus D_{-\iu}$.
These are linearly independent on account of $A$ having rank $2$ and they satisfy $(g_j^*Jg_k)^-(b)-(g_j^*Jg_k)^+(a)=0$, $j,k=1,2$ since $A\bb JA^*=0$.
Therefore the relation $T$, given as in \eqref{saT}, is a self-adjoint restriction of $T_\mx$.

We now have a closer look at the structure of $A$ which we write as a block matrix with two $2\times2$-blocks, i.e., $A=(A_1,A_2)$.
The condition $A\bb JA^*=0$ becomes $A_1JA_1^*=A_2JA_2^*$ which implies that $A_1$ is invertible if and only if $A_2$ is invertible.
Thus we have to discuss two cases, namely (i) $A_1$ and $A_2$ are both invertible and (ii) neither is invertible.
But before we do so let us note that multiplying $A$ from the left by a constant invertible $R$ gives a new matrix $RA$ of rank $2$ satisfying $RA\bb J(RA)^*=0$.
If $\bb J A^*$ determines the elements $(v_1,g_1)$ and $(v_2,g_2)\in D_\iu\oplus D_{-\iu}$, then $\bb J(RA)^*$ leads to linear combinations of these.
It follows that $A$ and $RA$ give rise to the same self-adjoint restriction of $T_\mx$.

If $A_1$ and $A_2$ are not invertible we choose $R$ so that $RA=\sm{c_1&c_2&0&0\\ 0&0&c_3&c_4}$.
Then $RA\bb J(RA)^*=0$ implies that $c_1\ov c_2$ and $c_3\ov c_4$ must be real.
Therefore we may as well assume that $A=\sm{\cos\alpha&\sin\alpha&0&0\\ 0&0&\cos\beta&\sin\beta}$ for appropriate $\alpha,\beta\in[0,\pi)$.
This is the case of separated boundary conditions.
Clearly $\alpha$ and $\beta$ are uniquely determined by $T$.

If $A_1$ and $A_2$ are invertible we may choose $R=A_2^{-1}$ so that $RA=(S,\id)$ with a (complex) symplectic matrix $S$.
This is the case of coupled boundary conditions.
We now show that $(S_1,\id)$ and $(S_2,\id)$ determine the same $T$, as given by \eqref{saT}, only if $S_1=S_2$.
Since $g_1$ and $g_2$ are in the domain of $T$ we get $\vec g_j(b)+S_k\cev g_j(a)=0$ for $j,k=1,2$.
Assume that $\cev g_1(a)$ and $\cev g_2(a)$ are linearly dependent, so that there are numbers $c_1$ and $c_2$ such that $c_1\cev g_1(a)+c_2\cev g_2(a)=0$.
Applying $S_1$ shows that $c_1\vec g_1(b)+c_2\vec g_2(b)=0$.
But this is impossible since $(S_1,\id)$ has rank $2$, and therefore $\cev g_1(a)$ and $\cev g_2(a)$ are actually linearly independent.
Next we have $S_2\cev g_j(a)=-\vec g_j(b)=S_1\cev g_j(a)$.
This means that $\ker(S_2-S_1)$ is two-dimensional, i.e., $S_2=S_1$.

\subsection{Deficiency indices are \texorpdfstring{$\mathbf{1}$}{1}; coupled boundary conditions}\label{S:4.2}
Since $n_\pm=1$ we must have at least one limit-circle endpoint.
If the other endpoint is limit-point, then only a separated boundary condition can be posed.
Thus, to have $n_\pm=1$ and coupled boundary conditions both endpoints have to be limit-circle.
If there are really bad points, recall the definitions of $v_0$ and $v_N$ in Section \ref{S:3.3}.
Precisely one of them has positive norm, say $v_0$, and that means that the spanning element of $\dom D_\iu$ may be chosen with support near $a$.
Hence, any boundary condition will involve only the endpoint $a$, i.e., it would be a separated boundary condition.
If $v_N$ has positive norm instead, we obtain again only a separated boundary conditions.
It follows from these considerations that $n_\pm=1$ and coupled boundary conditions occur only when we have two limit-circle endpoints, no really bad points, and $\mc L_0\neq\{0\}$.

Fix a point $x_0\in(a,b)$ and a $\mu\in\bb R\setminus\Lambda$.
We choose $\varphi\in\mc L_0$ so that $\varphi(x_0)$ is a unit vector in $\bb R^2$ and we define $\psi$ as the solution of the initial value problem $J\psi'+q\psi=\mu w\psi$, $\psi(x_0)=J\varphi(x_0)$.
Then $\Phi=(\varphi,\psi)$ is a fundamental matrix for $Ju'+qu=\mu wu$ satisfying $\Phi^*J\Phi=J$ at points of continuity.
We use this fundamental matrix to define the map $u\mapsto \cevec u=\Phi^*Ju$ for $u\in\dom\mc T_\mx$.
In particular, $\cevec\Phi(x)=J$, i.e., $\cevec\varphi(x)=\sm{0\\ 1}$ and $\cevec\psi(x)=\sm{-1\\ 0}$.
As these do not depend on $x$ we suppress their arguments below.

We need the following lemma.
\begin{lem}\label{L:4.4}
If $(u,f)\in\mc T_\mx$, then
$$\cevec u^-(x)=\cevec u(a)+\int_{(a,x)}\psi^* w(f-\mu u)\cevec\varphi.$$
In particular, the first component of $\cevec u$ is constant at points of continuity.
\end{lem}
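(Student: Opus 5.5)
The plan is to read off the two components of $\cevec u=\Phi^*Ju$ separately. The first component is $\varphi^*Ju$ and the second is $\psi^*Ju$. Each is a Wronskian-type expression $v^*Ju$ with $v$ a solution of the homogeneous equation, so Lagrange's identity \eqref{pLagrange} controls it. The identity $\int_{(a,x)}\psi^*w(f-\mu u)\,\cevec\varphi=\bigl(0,\int_{(a,x)}\psi^*w(f-\mu u)\bigr)^\top$ holds because $\cevec\varphi=\sm{0\\ 1}$. So the claim amounts to two facts: $(\varphi^*Ju)^-(x)$ is constant and equal to its limit at $a$, and $(\psi^*Ju)^-(x)$ equals its limit at $a$ plus $\int_{(a,x)}\psi^*w(f-\mu u)$.

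First I check that the relevant pairs lie in $\mc T_\mx$.
\begin{itemize}
\item Since $\varphi\in\mc L_0$, we have $J\varphi'+q\varphi=0$ and $w\varphi=0$. Hence $J\varphi'+q\varphi=w(\mu\varphi)$ and $\norm\varphi=0$, so $(\varphi,\mu\varphi)\in\mc T_\mx$.
\item For $\psi$ we need $\psi\in\mc L^2(w)$. Both endpoints are limit-circle and there are no really bad points, so we may pick $c\in(a,b)$ and view $c$ as a regular endpoint of $(a,c)$ and of $(c,b)$. Lemma~\ref{L:7.1}, applied on each piece, shows that every solution of $Ju'+qu=\mu wu$ has finite norm near $a$ and near $b$. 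On the compact middle part the norm is finite anyway. Hence $(\psi,\mu\psi)\in\mc T_\mx$.
\end{itemize}
This is the only step that is not pure bookkeeping, and I expect it to be the main point to justify carefully, in particular that the limit-circle property transfers to all real $\mu\notin\Lambda$.

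Next, apply \eqref{pLagrange} with $(v,g)=(\varphi,\mu\varphi)$ on $(c,x)$ for any $a<c<x<b$. This gives
$$(\varphi^*Ju)^-(x)-(\varphi^*Ju)^+(c)=\int_{(c,x)}\bigl(\varphi^*wf-\mu\varphi^*wu\bigr)=0,$$
because $w\varphi=0$. The left-hand side therefore vanishes for all $c<x$. Consequently $(\varphi^*Ju)^-(x)$ is independent of $x$ and equals $\lim_{c\downarrow a}(\varphi^*Ju)^+(c)$, which is the first component of $\cevec u(a)$. In particular the first component of $\cevec u$ is constant at points of continuity, where $\cevec u=\cevec u^-$.

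Then apply \eqref{pLagrange} with $(v,g)=(\psi,\mu\psi)$, using that $\mu$ is real:
$$(\psi^*Ju)^-(x)-(\psi^*Ju)^+(c)=\int_{(c,x)}\bigl(\psi^*wf-\mu\psi^*wu\bigr)=\int_{(c,x)}\psi^*w(f-\mu u).$$
Now let $c\downarrow a$. The term $(\psi^*Ju)^+(c)$ converges to the second component of $\cevec u(a)$, and this limit exists by \eqref{pLagrange}. The integral converges to $\int_{(a,x)}\psi^*w(f-\mu u)$ by dominated convergence, since $\psi$, $f$ and $u$ are in $\mc L^2(w)$ and Cauchy–Schwarz gives integrability.

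Finally, $\cevec u^-=(\Phi^-)^*Ju^-$ is the product of left limits, which identifies its components with $(\varphi^*Ju)^-$ and $(\psi^*Ju)^-$. Assembling the two components yields the stated formula.
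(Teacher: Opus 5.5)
Your proof is correct and follows the paper's argument: apply Lagrange's identity \eqref{pLagrange} to $(u,f)$ paired first with $\varphi$ and then with $(\psi,\mu\psi)$, and read off the two components of $\cevec u$. The paper pairs with $(\varphi,0)$ rather than your $(\varphi,\mu\varphi)$, which makes no difference since $w\varphi=0$; your verification that $\psi\in\mc L^2(w)$ via Lemma~\ref{L:7.1} and your limit $c\downarrow a$ make explicit what the paper leaves implicit.
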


\begin{proof}
Since $\varphi\in\mc L_0$ we have $w\varphi=0$ and $(\varphi,0)\in\mc T_\mx$.
Thus Lagrange's identity \eqref{pLagrange} shows that $(\varphi^*Ju)^-(x)-(\varphi^*Ju)^+(a)=\int_{(a,x)}\varphi^*wf=0$.
This is the first component of $\cevec u^-(x)-\cevec u(a)$.
We get for the second component $\int_{(a,x)}\psi^* w(f-\mu u)$ when we use \eqref{pLagrange} for $(\psi,\mu\psi)$ and $(u,f)$.
\end{proof}

Suppose $([u],\iu[u])$ spans $D_\iu$ (which has dimension $1$).
Then we have, by the previous lemma, that $\cevec u(x)=\alpha(x)\cevec\varphi+\beta\cevec\psi$ for an appropriate function $\alpha$ and a number $\beta$ at points of continuity.
Since, by Lagrange's identity \eqref{Lagrange} and Lemma \ref{L:4.1}, we have
\begin{equation}\label{nu}
0\neq 2\iu\norm{u}^2=\cevec u(b)^*J\cevec u(b)-\cevec u(a)^*J\cevec u(a)=2\iu \Im((\alpha(b)-\alpha(a))\ov\beta),
\end{equation}
we find that $\beta\neq0$.
Thus, defining the function $v_+=(u-\alpha\varphi)/\beta$, we have that $([v_+],\iu[v_+])$ is also an element which spans $D_\iu$.
Note that $\cevec v_+(a)=\cevec\psi=\sm{-1\\ 0}$ and $\cevec v_+(b)=p\cevec\varphi+\cevec\psi=\sm{-1\\ p}$ where $p=(\iu-\mu)\<\psi,v_+\>$ according to Lemma \ref{L:4.4}.
Since, as in \eqref{nu}, $0<\norm{v_+}^2=\Im(p)$ we obtain that $p$ cannot be real.
If we set $v_-=\ov{v_+}$ we get that $([v_-],-\iu[v_-])$ spans $D_{-\iu}$.
Also, $\cevec v_-(a)=\cevec\psi$ and $\cevec v_-(b)=\ov{p}\cevec\varphi+\cevec\psi$.
Thus we constructed a linear bijection between $D_\iu\oplus D_{-\iu}$ and
$$\mc D=\{(c_1 v_++c_2v_-,\iu(c_1 v_+-c_2 v_-)):c_1,c_2\in\bb C\}.$$
We now define the quasi-boundary map
$$\mc B:\mc D\to\bb C^4:(v,g)\mapsto\begin{bmatrix}\cevec{v}(a)\\ \cevec{v}(b)\end{bmatrix}.$$
In particular, $\mc B(c_1 v_++c_2v_-,\iu(c_1 v_+-c_2 v_-))=(-c_1-c_2,0,-c_1-c_2,c_1 p+c_2\ov p)^\top$.
Since $p$ is not real, this set of quasi-boundary values has dimension $2$ and is therefore in a linear one-to-one correspondence with $\mc D$ and thus with $D_\iu\oplus D_{-\iu}$.

Now let $T$ be a self-adjoint restriction of $T_\mx$ determined by the non-trivial element $(v,g)\in\mc D$, i.e.,
\begin{multline*}
T=\{([u],[f])\in T_\mx: (g^*Ju)^-(b)-(g^*Ju)^+(a)=0\}\\
 =\{([u],[f])\in T_\mx: \cevec g(b)^*J\cevec u(b)-\cevec g(a)^*J\cevec u(a)=0\}.
\end{multline*}
Recall that  $0=(g^*Jg)^-(b)-(g^*Jg)^+(a)=\cevec g(b)^*J\cevec g(b)-\cevec g(a)^*J\cevec g(a)$.
If we set $A=(-\cevec g(a)^*J,\cevec g(b)^*J)\in\bb C^{1\times 4}$, we get
$$T=\{([u],[f])\in T_\mx: A\sm{\cevec u(a)\\ \cevec u(b)}=0\}.$$
Note that $\sm{\cevec g(a)\\ \cevec g(b)}=\bb JA^*$ is an element of $\ran\mc B$.
Therefore we have that $0\neq A\in\operatorname{span}\{(0,-1,0,1),(0,0,1,0)\}$ and $A\bb JA^*=0$.
If $A=(0,-s,t,s)$ we get $A\bb JA^*=2\iu \Im(t\ov s)$, i.e., $t\ov s$ must be real.
Since we may multiply $A$ by a non-zero constant without changing the boundary condition we may assume that $A=(0,-\sin\alpha,\cos\alpha,\sin\alpha)$ for some $\alpha\in[0,\pi)$.

Conversely, suppose we have $A=(0,-\sin\alpha,\cos\alpha,\sin\alpha)$.
Then we can find a non-trivial element $(g,-v)\in \mc D$ whose quasi-boundary values are $\bb JA^*$.
It also satisfies $(g^*Jg)^-(b)-(g^*Jg)^+(a)=0$ and this implies that the relation
$$T=\{([u],[f])\in T_\mx:A\sm{\cevec u(a)\\ \cevec u(b)}=0\}$$
is self-adjoint.

If both endpoints are regular we could unlink the quasi-boundary values by using fundamental matrices $\Phi_\ell$ and $\Phi_r$ satisfying $\Phi_\ell(a)=\Phi_r(b)=J$.
The above considerations would still go through and the quasi-boundary values would coincide with the actual boundary values.

\subsection{Deficiency indices are \texorpdfstring{$\mathbf{1}$}{1}; separated boundary condition}\label{S:4.3}
This situation occurs in two instances which we describe now.
\begin{enumerate}
  \item One endpoint is limit-circle while the other one is limit-point (whether or not really bad points are present), see Theorem \ref{T:3.3}.
As we will see the separated boundary is then to be posed at the limit-circle end.
  \item Both endpoints are limit-circle in the presence of (necessarily finitely many) really bad points.
Recall the functions $v_0$ and $v_N$ defined in Section \ref{S:3.3}.
If both have norm $0$, then $n_\pm=0$, a case which we do not need to consider.
If both have positive norm, then $n_\pm=2$, a case which was considered in Section \ref{S:4.1}.
However, if precisely one of $v_0$ and $v_N$ has positive norm (and the other norm $0$), we will need a separated boundary condition.
It is posed at $a$ if $\norm{v_0}>0$ and at $b$ if $\norm{v_N}>0$.
\end{enumerate}

We will assume in the following that $b$ is the endpoint in whose vicinity all solutions of $Ju'+qu=\la wu$ have either zero or infinite norm.
The other case is, of course, treated similarly.

If $T$ is a self-adjoint restrictions of $T_\mx$ determined by the element $(v,g)\in D_\iu\oplus D_{-\iu}$, then $(g^*Ju)^-(b)=0$ according to Theorem \ref{T:3.3} (recall that $(g,-v)$ is also in $D_\iu\oplus D_{-\iu}\subset T_\mx$).
Therefore
$$T=\{([u],[f])\in T_\mx: (g^*Ju)^+(a)=0\}$$
and, using Lemma \ref{L:4.1},
$$T=\{([u],[f])\in T_\mx: \cev g(a)^*J\cev u(a)=0\}$$
after choosing an appropriate fundamental matrix as explained above.
Let $A^*=J^*\cev g(a)$, an element of $\bb C^2$.
Then $AJA^*=0$ but $A\neq0$.
The former condition means that we may choose $A^*\in\bb R^2$ since we may multiply $g_1$ by a nonzero constant without changing the boundary condition.
Indeed, we may choose $A=(\cos\alpha,\sin\alpha)$ for some $\alpha\in[0,\pi)$.

Conversely, if $A=(\cos\alpha,\sin\alpha)$ with $\alpha\in[0,\pi)$, we have $A\neq0$, $AJA^*=0$.
The same argument used to prove Lemma \ref{qbv} shows that the map $\mc B:D_\iu\oplus D_{-\iu}\to\bb C^2:(v,g)=\cevec v(a)$ is again a linear bijection.
Therefore $A$ determines a non-trivial element $(v,g)\in D_\iu\oplus D_{-\iu}$ satisfying $(g^*Jg)^+(a)=0$.

Thus we have shown that all self-adjoint restrictions of $T_\mx$ are given by
$$T=\{([u],[f])\in T_\mx: (\cos\alpha,\sin\alpha)\cev u(a)=0\}$$
where $\alpha\in[0,\pi)$.
If $a$ is regular we may replace $\cev u(a)$ by the true boundary values $u(a)$.

\section{The Krein-von Neumann extension}\label{S:5}
In this section we are studying the Krein-von Neumann extension for non-negative relations associated with the equation $Ju'+qu=wf$ in the case of limit-circle endpoints.
We begin with providing some basic facts about abstract Krein-von Neumann extensions.

Suppose $\mc H$ is a Hilbert space with inner product $\scp{\cdot,\cdot}$ and $S$ is a symmetric linear relation in $\mc H\times\mc H$.
$S$ is called bounded below (by $\alpha$), if there is an $\alpha\in\bb R$ such that $\<f,u\>\geq \alpha\norm{u}^2$ for all $(u,f)\in S$.
$S$ is called non-negative if one can choose $\alpha=0$.

One convenient definition of the Krein-von Neumann extension $S_{\rm KvN}$ of the non-negative relation $S$ is the inverse of the Friedrichs extension of $S^{-1}$, i.e.,
$$S_{\rm KvN}= ((S^{-1})_F)^{-1},$$
where the subscript $F$ denotes the Friedrichs extension.
This definition relies on the fact that $S$ is non-negative if and only if $S^{-1}$ is non-negative.
For background information on such matters we refer the reader, for example, to Riesz and Sz.-Nagy \cite{MR1068530}, Ando and Nishio \cite{MR0264422}, Faris \cite{MR0467348}, Behrndt, Hassi, and de Snoo \cite{MR3971207}, and Fucci et al. \cite{MR4410827}.

The Friedrichs extension for linear operators can be found in many books.
For linear relations there are not so many sources, but Behrndt, Hassi, and de Snoo \cite{MR3971207} is one of them.
In any case we add the Appendix \ref{A:A} for the convenience of the reader and to present our particular viewpoint.

Combining Theorem \ref{Friedrichs} with the above definition of the Krein-von Neumann extension leads to the following equivalent formulation.
\begin{dfn}
Suppose $S$ is a non-negative relation in $\mc H\times\mc H$.
Then
\[S_{\rm KvN}=\{(u,f)\in S^*: \exists (u_n,f_n)\in S:f_n\to f\text{ and }\langle u_n,f_n\rangle\to\langle u,f\rangle\}\]
is called the Krein-von Neumann extension\footnote{Clearly, $S$ and even $\ov S$ are contained in $S_{\rm KvN}$.} of $S$.
\end{dfn}

We obtain immediately the following result.
\begin{prop}\label{P:6.2}
Suppose $S$ is a non-negative relation in $\mc H\times\mc H$.
Then  we have $\ker S^*\times\{0\}\subset S_{\rm KvN}$.
\end{prop}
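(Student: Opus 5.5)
The proof will check the defining conditions of the Definition directly. Take $u\in\ker S^*$, i.e.\ $(u,0)\in S^*$. We must show two things: $(u,0)\in S^*$, which holds by assumption, and the existence of a sequence $(u_n,f_n)\in S$ with $f_n\to 0$ and $\langle u_n,f_n\rangle\to\langle u,0\rangle=0$.

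The natural choice is the constant sequence $(u_n,f_n)=(0,0)$. Since $S$ is a linear relation, it is a subspace of $\mc H\times\mc H$, so $(0,0)\in S$. Trivially $f_n=0\to0$ and $\langle u_n,f_n\rangle=0\to 0=\langle u,0\rangle$. Hence $(u,0)\in S_{\rm KvN}$.

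There is one point to double-check. The Definition imposes no convergence of $u_n$ to $u$; only $f_n\to f$ and convergence of the quadratic form are required. The trivial sequence therefore suffices. This is consistent with $S_{\rm KvN}=((S^{-1})_F)^{-1}$, since $\ker S^*\times\{0\}$ corresponds to $\{0\}\times\ker S^*=\{0\}\times\operatorname{mul}(S^{-1})^*$, which lies in the Friedrichs extension of a relation. No step is a genuine obstacle. The only care needed is to note that non-negativity of $S$ is not used beyond guaranteeing that $S_{\rm KvN}$ is defined.
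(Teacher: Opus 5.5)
Your proof is correct and is exactly the immediate argument the paper intends; the paper gives no written proof and just says the result follows from the definition. For $(u,0)\in S^*$, the constant sequence $(0,0)\in S$ satisfies both limit requirements, and you rightly note that non-negativity plays no role beyond making $S_{\rm KvN}$ defined.
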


The simplest example of a Krein-von Neumann extension is for $S=\{(0,0)\}$.
In this case $S^*=\mc H\times\mc H$ and $S_{\rm KvN}=\ker S^*\times\{0\}=\mc H\times\{0\}$.

We now apply these abstract definitions in the case when $S=T_\mn$, the minimal relation associated with the equation $Ju'+qu=wf$.
We set $T_K=(T_\mn)_{\rm KvN}$.
Our goal is to identify the boundary conditions determining $T_K$.

First assume that $0\notin\Lambda$ and choose a fundamental matrix $U(\cdot,0)$ for $Ju'+qu=0$ satisfying the requirements of Section \ref{S:4}.
Using this fundamental matrix we then define the quasi-boundary values $\cevec{u}(a)$ and $\cevec{u}(b)$ as we did there; recall that $\cevec u(x)=U(x,0)^*Ju(x)$ .

\begin{thm}\label{T:6.3}
Suppose the endpoints $a$ and $b$ are limit-circle, $T_\mn$ is non-negative, and $0\notin\Lambda$.
Then the Krein-von Neumann extension is given by
\begin{multline}\label{eqtk}
T_K=\ov{T_\mn}\dirsum(\ker T_\mx\times\{0\})=\{(u,f)\in T_\mx:\cevec{u}(b)=\cevec{u}(a)\}\\
 =\{(u,f)\in T_\mx:\int U(\cdot,0)^*wf=0\}.
\end{multline}
In particular, $T_K$ is self-adjoint.
\end{thm}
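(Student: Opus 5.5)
The plan is to prove the three descriptions of the relation in \eqref{eqtk} coincide and form a self-adjoint relation, and only then identify it with $T_K$. Since $0\notin\Lambda$, the set $\Xi_0$ is empty. So $U(\cdot,0)$ is a fundamental matrix on all of $(a,b)$, symplectic at points of continuity, and $\cevec u=U(\cdot,0)^*Ju$ is defined on the whole interval.

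Because both endpoints are limit-circle, the argument of Lemma~\ref{L:7.1} shows every solution of $Ju'+qu=0$ lies in $\mc L^2(w)$. Hence $\ker T_\mx$ consists of the classes of $U(\cdot,0)c$, $c\in\bb C^2$, and has dimension $2-\dim\mc L_0$. By Lemma~\ref{L:7.1} and Section~\ref{S:3} (no really bad points exist since $0\notin\Lambda$), this number equals $n_\pm$.

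\textbf{Step 1: the last two sets in \eqref{eqtk} agree.} Apply Lagrange's identity \eqref{Lagrange} to $(U(\cdot,0)e_j,0)\in\mc T_\mx$ and $(u,f)$. This gives
\[
e_j^*\bigl(\cevec u(b)-\cevec u(a)\bigr)=e_j^*\int U(\cdot,0)^*wf .
\]
The right-hand side depends only on the class of $f$, so the condition $\cevec u(b)=\cevec u(a)$ is independent of the representative of $[u]$. Call the resulting relation $T$.

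\textbf{Step 2: both summands lie in $T$.} For $u=U(\cdot,0)c$ we get $\cevec u=U^*JUc=Jc$, which is constant, so $\ker T_\mx\times\{0\}\subset T$. For $([u],[f])\in\ov{T_\mn}$, Theorem~\ref{T:3.1} supplies a representative $u_0$ with $(v^*Ju_0)(a)=(v^*Ju_0)(b)=0$ for all $v$. Taking $v$ to be the columns of $U(\cdot,0)$ yields $\cevec u_0(a)=\cevec u_0(b)=0$, so $\ov{T_\mn}\subset T$.

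\textbf{Step 3: the sum is direct.} Suppose $([u],0)$ lies in both $\ov{T_\mn}$ and $\ker T_\mx\times\{0\}$. Then $u_0$ is a solution with constant $\cevec u_0$, and $\cevec u_0(a)=0$. Hence $u_0=0$, so $[u]=0$.

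\textbf{Step 4: $T$ is symmetric.} For $(u,f),(v,g)\in T$, Lemma~\ref{L:4.1} together with \eqref{Lagrange} gives
\[
\scp{v,f}-\scp{g,u}=\cevec v(b)^*J\cevec u(b)-\cevec v(a)^*J\cevec u(a)=0 .
\]

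\textbf{Step 5: $T$ equals the direct sum and is self-adjoint.} The relation $\ov{T_\mn}\dirsum(\ker T_\mx\times\{0\})$ is a symmetric extension of $\ov{T_\mn}$ of dimension $n_\pm$ over it. Any symmetric extension has dimension at most $n_\pm$ over $\ov{T_\mn}$, and one of exactly that dimension is self-adjoint (von Neumann, \eqref{vNeumann}). Since this direct sum is contained in the symmetric relation $T$, the two coincide and both are self-adjoint.

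\textbf{Step 6: identification with $T_K$.} By Proposition~\ref{P:6.2} and the footnote to the definition of $S_{\rm KvN}$, $T_K$ contains both $\ov{T_\mn}$ and $\ker T_\mx\times\{0\}$. Hence $T_K\supset T$. The Krein-von Neumann extension is symmetric (indeed self-adjoint, as the inverse of a Friedrichs extension, Appendix~\ref{A:A}). A self-adjoint $T$ is maximal symmetric, so $T_K=T$.

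\textbf{Main obstacle.} I expect the delicate point to be the representative issue when $\dim\mc L_0=1$. The quasi-boundary values at a single endpoint then depend on the chosen representative. The plan avoids this by working only with the representative-independent difference $\cevec u(b)-\cevec u(a)$ in Step 1, and invoking Theorem~\ref{T:3.1} only to obtain some good representative in Steps 2 and 3. The dimension count $\dim\ker T_\mx=n_\pm=2-\dim\mc L_0$ must also be justified in both the definite and non-definite cases, using Theorems~\ref{T:8.7} and \ref{T:8.7L0}.
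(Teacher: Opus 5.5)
Your proof is correct, but it is organized differently from the paper's. The paper sandwiches $T_K$. It gets $R=\ov{T_\mn}\dirsum(\ker T_\mx\times\{0\})\subset T_K$ from Proposition~\ref{P:6.2}. It proves $T_K\subset T$ directly from the approximation property in the definition of $T_K$: for $(u_n,f_n)\in T_\mn$ one has $\int U(\cdot,0)^*wf_n=0$, and this passes to the limit as $f_n\to f$. It takes self-adjointness of $T$ from the boundary-condition classification of Sections~\ref{S:4.1} and~\ref{S:4.2}; in the $n_\pm=1$ case this requires switching to the fundamental matrix built from $\mc L_0$. Finally it gets $R=T$ by counting dimensions, so self-adjointness of $T_K$ comes out as a conclusion.

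You never prove $T_K\subset T$. Instead you show $R\subset T$ directly via Theorem~\ref{T:3.1}, and you verify symmetry of $T$ straight from Lagrange's identity and Lemma~\ref{L:4.1}. Von Neumann's count then shows that the $n_\pm$-dimensional symmetric extension $R$ is already self-adjoint, hence $R=T$. You finish by maximality, which means importing a priori that $T_K$ is self-adjoint, i.e., applying Theorem~\ref{Friedrichs} to $T_\mn^{-1}$.

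Your route treats $n_\pm=1$ and $n_\pm=2$ uniformly and needs none of the Section~\ref{S:4} machinery. Your Step~3 is also a more careful version of the paper's one-line remark that the sum is direct. Elements of $\ov{T_\mn}$ need not be compactly supported, so the good representative supplied by Theorem~\ref{T:3.1} is the right tool there. The paper's route works directly with the limiting characterization of $T_K$ and obtains self-adjointness of $T_K$ as a by-product rather than an input. Its argument for $T_K\subset T$ is also the template reused in Theorem~\ref{T:6.4}.
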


We emphasize that the Krein-von Neumann extension is determined by periodic quasi-boundary values for any choice of a fundamental matrix that is symplectic at some (and hence all) point(s).

\begin{proof}
The last equality in \eqref{eqtk} follows from Lagrange's identity \eqref{Lagrange}.
Note that no non-trivial element of $\ker\mc T_\mx$ can be compactly supported and hence that the intersection of $\ov{T_\mn}$ and $\ker{T_\mx}\times\{0\}$ is trivial.
Now define
$$R=\ov{T_\mn}\dirsum(\ker T_\mx\times\{0\})\quad\text{and}\quad T=\{(u,f)\in T_\mx:\cevec{u}(b)=\cevec{u}(a)\}.$$
Then Proposition \ref{P:6.2} gives that $R\subset T_K$.
We proved in Section \ref{S:4.1} that $T$ is a self-adjoint extension of $T_\mn$ when $n_\pm=2$.
If, instead, $n_\pm=1$, then Lemma \ref{L:4.4}, upon choosing the fundamental matrix underlying the definition of quasi-boundary values appropriately, shows that the first component of $\cevec u$ is constant.
The requirement $\cevec u(b)=\cevec u(a)$ is then represented by the choice $A=(0,1,0,-1)$ which shows again that $T$ is self-adjoint.

Now we show that $T_K\subset T$.
To this end assume that $(u,f)\in T_K$.
Then there exists a sequence $(u_n,f_n)\in T_\mn$ such that $f_n\to f$ and $\langle u_n,f_n\rangle\to \langle u,f\rangle$.
For each $(u_n,f_n)\in T_\mn\subset T$ we have $0=\cevec{u}_n(b)-\cevec{u}_n(a)=\int_{(a,b)}U(\cdot,0)^*wf_n$.
Since $f_n\to f$ this gives
\[\cevec{u}(b)-\cevec{u}(a)=\int_{(a,b)}U(\cdot,0)^*wf=0\]
implying $T_K\subset T$ as promised.

If $\mc L_0$ is trivial, then $\ker T_\mx$ has dimension $2$, i.e., $R$ is a $2$-dimensional extension of $T_\mn$.
$T$ is also a $2$-dimensional extension of $T_\mn$ since $n_\pm=2$.
Thus $R=T$ and this completes the proof for this case.

Finally, if $\mc L_0$ is not trivial, then both $R$ and $T$ are $1$-dimensional extensions of $T_\mn$ and again identical.
\end{proof}

We have shown that, under the hypotheses of Theorem \ref{T:6.3}, the Krein-von Neumann extension can be described by periodic quasi-boundary values.
We show now that an alternative way is to describe it by a Dirichlet-to-Neumann-type relation.
This is particularly interesting in the case of regular endpoints on which we concentrate now.
Choose the fundamental matrix $U$ for $Ju'+qu=0$ by the requiring $U(b)=J$ (note that $U$ is then real-valued).
Then $(u,f)\in T_K$ if and only if $u(b)=\cevec u(b)=\cevec u(a)=U(a)^*Ju(a)$.
The ``Dirichlet'' data $D$ of $(u,f)$ are given by the first components of the boundary data, i.e., by $D=(u_1(a),u_1(b))=(u_1(a),U_{2,1}(a)u_1(a)-U_{1,1}(a)u_2(a))$
where $U_{j,k}$ denotes the element in row $j$ and column $k$ of $U$.
The ``Neumann'' data are given by $N=(-u_2(a),u_2(b))$.\footnote{We put the extra minus sign in the first entry of $N$ because Neumann data use outward derivatives.}
Thus $N=(-u_2(a),U_{2,2}(a)u_1(a)-U_{1,2}(a)u_2(a))$.
The ``Dirichlet-to-Neumann'' relation $M$ is therefore given as
$$M=\operatorname{span}\left\{\left(\bsm{1\\ U_{2,1}(a)},\bsm{0\\ U_{2,2}(a)}\right), \left(\bsm{0\\ U_{1,1}(a)},\bsm{1\\ U_{1,2}(a)}\right)\right\}.$$
If $U_{1,1}(a)=0$ we have that $0$ is a ``Dirichlet'' eigenvalue, i.e., an eigenvalue of the self-adjoint relation
$$T_D=\{(u,f)\in T_\mx:(1,0)u(a)=(1,0)u(b)=0\}.$$
Otherwise $M$ is the graph of a linear map with matrix representation
\[M=\frac{1}{U_{1,1}(a)}\begin{pmatrix} -U_{2,1}(a)&1\\ 1&U_{1,2}(a) \end{pmatrix}\]
using the fact that $J=U(x)^*JU(x)=\det U(x)J$ and hence $\det U(x)=1$.
We may now write $T_K=\{(u,f)\in T_\mx:(D,N)\in M\}$ or, when $0$ is not a Dirichlet eigenvalue, $T_K=\{(u,f)\in T_\mx:N=MD\}$.

In Theorem \ref{T:6.3} we assumed that $0\not\in\Lambda$.
If, instead, $0$ is in $\Lambda$, a simple example shows that the conclusion of Theorem \ref{T:6.3} does not hold.
The example is as follows.
Let $q=\sm{0&2\\2&0}\delta_0$ and $w=\sm{2&0\\0&2}\delta_0$.
Since $u\in\ker T_\mx$ if and only if $u(x)=\alpha\sm{1\\ 0}$ if $x<0$ and $u(x)=\beta\sm{0\\ 1}$ if $x>0$ one finds $T_\mn=\{(0,0)\}$ and
$$T_K = L^2(w)\times\{0\}=\{(u,f)\in T_\mx:(0,1)u(a)=0=(1,0)u(b)\},$$
i.e., $T_K$ is determined by separated boundary conditions.

In fact, this situation persists in greater generality as we will show next.
If both endpoints $a$ and $b$ are limit-circle and $\Xi_0$ is not empty but finite, let $\Xi_0=\{x_1,...,x_N\}$ where $x_1<...<x_N$.
Also set $x_0=a$ and $x_{N+1}=b$.
Using Lemma \ref{P:3.3} we know that any element of $\ker\mc T_\mx$ is a linear combination of functions supported on $\ov{(x_j,x_{j+1})}$.
Each such interval can support at most one linearly independent function $u$ in $\ker\mc T_\mx$ since such a solution must satisfy that $u^+(x_j)\in\ker B_+(x_j,0)$ for $j=1,..., N$ and $u^-(x_{j+1})\in\ker B_-(x_{j+1},0)$ for $j=0,...,N-1$.
The compactly supported functions in $\ker\mc T_\mx$ are, of course in $\ker\mc T_\mn$.
Now choose symplectic fundamental matrices $U_\ell$ and $U_r$ for $Ju'+qu=0$ on the intervals $(a,x_1)$ and $(x_N,b)$, respectively.
As in Section \ref{S:4} we define $u\mapsto \cev u$ where $\cev u(x)=U_\ell(x)^*Ju(x)$ and $u\mapsto \vec u$ where $\vec u(x)=U_r(x)^*Ju(x)$.
Also, let $v_0$ and $v_N$ be non-trivial solutions of $Ju'+qu=0$ on $(a,b)$ but supported only on $(a,x_1]$ and $[x_N,b)$, respectively.

\begin{thm}\label{T:6.4}
Suppose both endpoints $a$ and $b$ are limit-circle, $0\in\Lambda$, $\Xi_0$ is finite, and $T_\mn$ is non-negative.
Then the Krein-von Neumann extension is given by
$$T_K=\ov{T_\mn}\setsum(\ker T_\mx\times\{0\})=\{(u,f)\in T_\mx:\cev v_0(a)^*J\cev u(a)=0=\vec v_N(b)^*J\vec{u}(b)\}.$$
In particular, $T_K$ is self-adjoint.
\end{thm}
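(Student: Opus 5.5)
We mirror the proof of Theorem \ref{T:6.3}. Set
\[
R=\ov{T_\mn}\setsum(\ker T_\mx\times\{0\}),\qquad
T=\{(u,f)\in T_\mx:\cev v_0(a)^*J\cev u(a)=0=\vec v_N(b)^*J\vec u(b)\}.
\]
The plan is to prove $R\subset T_K\subset T$ and then $R=T$ by a dimension count. Proposition \ref{P:6.2}, together with $\ov{T_\mn}\subset T_K$, gives $R\subset T_K$.

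For $T_K\subset T$ we first rewrite the two conditions defining $T$. By Lemma \ref{L:4.1}, $\cev v_0(a)^*J\cev u(a)=(v_0^*Ju)^+(a)$ and $\vec v_N(b)^*J\vec u(b)=(v_N^*Ju)^-(b)$. Since $(v_0,0)\in\mc T_\mx$, Lagrange's identity \eqref{pLagrange} applied on $(a,x)$ with $x$ slightly right of $x_1$ (where $v_0=0$) gives $(v_0^*Ju)^+(a)=-\int v_0^*wf=-\scp{v_0,f}$. Similarly $(v_N^*Ju)^-(b)=\scp{v_N,f}$. Hence
\[
T=\{(u,f)\in T_\mx:\scp{v_0,f}=0=\scp{v_N,f}\}.
\]
Now let $(u,f)\in T_K$, with $(u_n,f_n)\in T_\mn$ and $f_n\to f$. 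Each $u_n$ is compactly supported, so both boundary terms in Lagrange's identity vanish and $\scp{v_0,f_n}=\scp{v_N,f_n}=0$. Passing to the limit yields $\scp{v_0,f}=\scp{v_N,f}=0$, that is, $(u,f)\in T$.

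It remains to show that $T$ is self-adjoint and $T\subset R$. We expect this to be the main obstacle, since several cases have to be tracked. The first task is to check that $T$ is one of the self-adjoint restrictions described in Section \ref{S:4}.

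\emph{Norms of $v_0$ and $v_N$.} If $\norm{v_0}=0$, the condition at $a$ is vacuous, and Section \ref{S:3.3} together with Section \ref{S:4.3} (applied to the problem on $(a,x_1)$) shows that there is no boundary condition to pose at $a$. If $\norm{v_0}>0$, the condition at $a$ is a separated condition $A=\cev v_0(a)^*J$, and $v_0$ is real, so $AJA^*=0$ after scaling to a real vector. The same dichotomy applies at $b$. Thus $T$ is given by separated conditions, and by Section \ref{S:4.1} or Section \ref{S:4.3} it is self-adjoint. In the cases $\norm{v_0}=0$ or $\norm{v_N}=0$ the deficiency index at that end drops, and we have to check that this matches the trichotomy of Section \ref{S:3.3}. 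Here we will need $0\in\Lambda$, which means $0$ is a really bad value on $\Xi_0$, and we must relate the points of $\Xi_0$ that are really bad to the solutions $v_0,v_N$ built with $\la=0$. This bookkeeping is the delicate part.

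\emph{Counting dimensions.} With $n$ denoting the number of elements of $\{v_0,v_N\}$ of positive norm, $T$ is an $n$-dimensional extension of $\ov{T_\mn}$. The classes $[v_0]$ and $[v_N]$ (when nonzero) lie in $\ker T_\mx$ but not in $\dom\ov{T_\mn}$: by Theorem \ref{T:3.1}, membership would force $(u^*Jv_0)^+(a)=0$ for every $(u,f)\in\mc T_\mx$, which fails for nonzero norm by Lemma \ref{qbv}-type arguments. The classes of the middle solutions supported on $\ov{(x_j,x_{j+1})}$ are compactly supported, so they already lie in $\ov{T_\mn}$. Hence $R$ is an extension of $\ov{T_\mn}$ of dimension at least $n$. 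Since $R\subset T$, this forces $R=T_K=T$, and self-adjointness follows.
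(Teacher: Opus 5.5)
Your inclusions $R\subset T_K\subset T$ are correct and follow the paper. In particular, your rewriting $T=\{(u,f)\in T_\mx:\scp{v_0,f}=0=\scp{v_N,f}\}$ via Lagrange's identity is exactly what the paper uses to pass to the limit.

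The gap is the final dimension count. You measure everything by $n$, the number of $v_j$ of positive norm, and assert that $T$ is an $n$-dimensional extension of $\ov{T_\mn}$ and $R$ one of dimension at least $n$. From ``neither $[v_0]$ nor $[v_N]$ lies in $\ker\ov{T_\mn}$'' you cannot conclude that they span a two-dimensional space. In fact they can be nonzero but linearly dependent when $x_1=x_N$.

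Take $(a,b)=(-1,1)$, $q=\sm{0&2\\ 2&1}\delta_0$, $w=\sm{2&0\\ 0&0}\delta_0$. Then:
\begin{itemize}
\item $\Lambda=\{0\}$ and $\Xi_0=\{0\}$, so there are no really bad points;
\item $T_\mn=\{(0,0)\}$ and $L^2(w)$ is one-dimensional;
\item $v_0=(1,0)^\top$ (left of $0$) and $v_N=(1,-4)^\top$ (right of $0$) both have norm $1/\sqrt2$;
\item $v_0-v_N\in\mc L_0$, so $[v_0]=[v_N]$ and $n_\pm=1$.
\end{itemize}
Here $n=2$, whereas $R=T_K=T=L^2(w)\times\{0\}$ is a $1$-dimensional extension. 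So both of your dimension claims are false, and your appeal to Section \ref{S:4.1} (which needs $n_\pm=2$) breaks down.

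The example also shows that the bookkeeping you postpone, via the trichotomy of Section \ref{S:3.3}, targets the wrong objects. Your $x_1,x_N$ are points of $\Xi_0$, not necessarily really bad points. The $v_0,v_N$ of Section \ref{S:3.3} are different functions, built with non-real $\la$ at the extreme really bad points.

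The missing idea is to split according to linear (in)dependence of $[v_0]$ and $[v_N]$, as the paper does.
\begin{itemize}
\item If they are independent, $R$ is a $2$-dimensional extension inside the self-adjoint $T_K$, so $n_\pm=2$. Then $T$ is self-adjoint by Section \ref{S:4.1} and is also a $2$-dimensional extension, which forces $R=T_K=T$.
\item If $\alpha v_0+\beta v_N=\ell_0\in\mc L_0$ with $(\alpha,\beta)\neq(0,0)$, then $\ov\alpha\scp{v_0,f}+\ov\beta\scp{v_N,f}=\scp{\ell_0,f}=0$ for all $f$, because $w\ell_0=0$. So when $\alpha\beta\neq0$ one condition implies the other, and when, say, $\alpha=0$ the condition involving $v_N$ is vacuous. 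Either way $T$ is cut out by a single condition, and, as in the paper, $T$ and $R$ are both $1$-dimensional extensions.
\item If both classes vanish, then $n_\pm=0$ and $R=T=\ov{T_\mn}$.
\end{itemize}
In your formulation, the right invariant is $\dim\operatorname{span}\{[v_0],[v_N]\}$, not the number of $v_j$ with positive norm.
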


We emphasize that $\ov{T_\mn}\cap(\ker T_\mx\times\{0\})$ does not have to be trivial since there may be compactly supported elements of $\ker{\mc T_\mx}$ of positive norm.
We use the symbol $\setsum$ to denote the span of the union of two subspaces $A$ and $B$ of a given vector space by $A\setsum B$.

\begin{proof}
Assume first that $[v_0]$ and $[v_N]$ are linearly independent.
Note that $[v_0]$ and $[v_N]$ are in $\ker T_\mx$ but not in $\ker\ov{T_\mn}$.
Thus
$$R=\ov{T_\mn}\setsum(\ker T_\mx\times\{0\})$$
is a $2$-dimensional extension of $T_\mn$ and, as before, a subset of $T_K$.
Let
$$T=\{(u,f)\in T_\mx:\cev v_0(a)^*J\cev u(a)=0=\vec v_N(b)^*J\vec{u}(b)\}.$$
As in the proof of Theorem \ref{T:6.3} we prove now that $T_K\subset T$ which we know from Section \ref{S:4.1} to be a self-adjoint extension of $T_\mn$.
If $(u,f)\in T_K$, then there is a sequence $(u_n,f_n)\in T_\mn\subset T$ with $f_n\to f$ and $\<u_n,f_n\>\to\<u,f\>$.
Therefore $0=\vec v_N(b)^*J\vec u_n(b)=\int v_N^*wf_n$ which implies, upon taking the limit as $n$ tends to infinity, that $\vec v_N(b)^*J\vec u(b)=\int v_N^*wf=0$.
Similarly, $\cev v_0(a)^*J\cev u(a)=0$, i.e, $(u,f)\in T$.
Hence we have $R\subset T_K\subset T$ where both $R$ and $T$ are $2$-dimensional extensions of $T_\mn$.
Hence $R=T_K=T$.

Now suppose that $[v_0]$ and $[v_N]$ are linearly dependent but not both $0$, i.e., there are numbers $\alpha$ and $\beta$, not both $0$, such that $\alpha v_0+\beta v_N=\ell_0\in\mc L_0$.
In this case $R$ is a $1$-dimensional extension of $T_\mn$.
If neither $\alpha$ nor $\beta$ is $0$, one of the equations $\cevec v_0(a)^*J\cevec u(a)=0$ and $\cevec v_N(b)^*J\cevec{u}(b)=0$ implies the other, since $\cevec\ell_0(b)J\cevec u(b)-\cevec\ell_0(a)J\cevec u(a)=0$ for all $(u,f)\in\mc T_\mx$ and since $\ell_0=\alpha v_0$ near $a$ and $\ell_0=\beta v_N$ near $b$.
Thus $T$ is a $1$-dimensional extension of $T_\mn$.
Otherwise, if, say $\alpha=0$, the condition $\vec v_N(b)^*J\vec{u}(b)=0$ is vacuous and, again, $T$ is a $1$-dimensional extension of $T_\mn$.
Either way, $R=T_k=T$.

Finally, if $[v_0]=[v_N]=0$, then our results of Section \ref{S:3} show that $n_\pm=0$ in which case $T_K=R=\ov{T_\mn}$.
The conditions defining $T$ are now both vacuous, since $v_0$ and $v_N$ are in $\mc L_0$. Therefore $T=\ov{T_\mn}$, too.
\end{proof}

We close with the observation that for the Friedrichs extension there is no specific type of boundary condition even if we restrict ourselves to problems with two limit-circle endpoints and where $0$ is not in $\Lambda$.
Indeed, we have the following examples in each of which we choose $J=\sm{0&-1\\ 1&0}$ and $(a,b)=(-2,2)$.
Our conditions will guarantee that $0\not\in\Lambda$, $T_\mn$ is non-negative, and the endpoints are regular.
\begin{enumerate}
  \item When $q=0$ and $w=2\delta_0\id$ the Friedrichs extension is given by anti-periodic boundary conditions.
  \item When $q=0$ and $w=2(\delta_0+\delta_1)\id$ the Friedrichs extension is given by periodic boundary conditions.
  Incidentally, the Krein-von Neumann and the Friedrichs extension coincide in this case as it is the only non-negative extension of $T_\mn$.
  \item Let $q=\sm{v&0\\ 0&-1/p}$ and $w=\sm{r&0\\ 0&0}$ where $1/p$ and $r$ are positive and integrable while $v$ is non-negative and integrable with respect to Lebesgue measure.
  It is then well-known (see for example Theorem 6.8.5 (iii) in \cite{MR3971207} or Remark 4.3.11 in \cite{GNZ}) that the Friedrichs extension is given by Dirichlet conditions, i.e., $(1,0)u(-2)=(1,0)u(2)=0$.
\end{enumerate}

\appendix
\section{The Friedrichs extension}\label{A:A}
We give a short presentation of the Friedrichs extension for linear relations roughly following Weidmann's strategy for operators \cite{MR566954}.

Let $\mc H$ be a Hilbert space with scalar product $\scp{\cdot,\cdot}$ and associated norm $\norm{\cdot}$.
Also let $S$ be a symmetric linear relation in $\mc H\times \mc H$ and denote its domain by $\mc V$.
Assume that $S$ is bounded below by $\alpha\in\bb R$, i.e., that $\scp{f,u}\geq\alpha\norm{u}^2$ for all $(u,f)\in S$.
If $(u,f)$, $(u,f')$, and $(v,g)$ are in $S\subset S^*$, then $(0,f-f')\in S^*$ and therefore $\scp{f-f',v}=\scp{0,g}=0$.
Thus the map $\mc V\times \mc V\to\bb C: (u,v)\mapsto \scp{f,v}=\scp{u,g}$ is well-defined.
Since $S$ is bounded below by $\alpha$
\begin{equation}\label{dH}
\dscp{u,v}=\scp{f,v}+(1-\alpha)\scp{u,v}
\end{equation}
is a scalar product in $\mc V$.
We denote the norm associated with $\dscp{\cdot,\cdot}$ by $\tnorm\cdot$.
The completion of $\mc V$ under $\tnorm\cdot$ will be called $\mc W$.
For $u$ and $v$ in $\mc W$ choose sequences $u_n\in\mc V$ and $v_n\in \mc V$ such that $\tnorm{u_n-u}$ and $\tnorm{v_n-v}$ tend to $0$.
Since $\dscp{u,v}=\lim_{n\to\infty}\dscp{u_n,v_n}$ does not depend on how these sequences are chosen, we may extend the inner product $\dscp{\cdot,\cdot}$ to $\mc W$ which, thereby, becomes a Hilbert space.
We also introduce $\mc X$ as the closure of $\mc V$ in $\mc H$.

\begin{lem}\label{L:A1}
$\tnorm\cdot$ is $\norm\cdot$-compatible, that is, the following two properties hold.
(i) $\norm v\leq\tnorm v$ for all $v\in\mc V$ and (ii) whenever $n\mapsto v_n\in\mc V$ is a Cauchy sequence with respect to $\tnorm{\cdot}$ and $\norm{v_n}\to0$, then also $\tnorm{v_n}\to0$.
\end{lem}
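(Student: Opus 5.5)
Property (i) follows directly from the definition \eqref{dH}. For $v\in\mc V$, choose $(v,g)\in S$. Then
\[\tnorm v^2=\scp{g,v}+(1-\alpha)\norm v^2\geq\alpha\norm v^2+(1-\alpha)\norm v^2=\norm v^2,\]
where we used $\scp{g,v}\geq\alpha\norm v^2$. This is the whole argument for (i).

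For (ii), let $v_n\in\mc V$ be $\tnorm\cdot$-Cauchy with $\norm{v_n}\to0$, and pick $(v_n,g_n)\in S$. The plan is to write
\[\tnorm{v_n}^2=\dscp{v_n,v_n-v_m}+\dscp{v_n,v_m}\]
and estimate the two terms separately.

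The first term is handled by Cauchy–Schwarz in $\dscp{\cdot,\cdot}$. Since Cauchy sequences are bounded, there is $C$ with $\tnorm{v_n}\leq C$ for all $n$. Given $\varepsilon>0$, choose $N$ such that $\tnorm{v_n-v_m}<\varepsilon$ for $n,m\geq N$. Then $|\dscp{v_n,v_n-v_m}|\leq C\varepsilon$ for all such $n,m$.

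For the second term, fix $n\geq N$ and use the well-definedness of the form established just before the lemma:
\[\dscp{v_n,v_m}=\scp{g_n,v_m}+(1-\alpha)\scp{v_n,v_m}.\]
This pairs $g_n$ against $v_m$, so the $\mc H$-norm controls it. As $m\to\infty$ with $n$ fixed, we have $\norm{v_m}\to0$, so
\[|\dscp{v_n,v_m}|\leq\bigl(\norm{g_n}+|1-\alpha|\norm{v_n}\bigr)\norm{v_m}\to0.\]

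Combining the two estimates and letting $m\to\infty$ gives $\tnorm{v_n}^2\leq C\varepsilon$ for every $n\geq N$. Since $\varepsilon$ is arbitrary, $\tnorm{v_n}\to0$.

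The only delicate point is the order of the limits. One must fix $n$ first and let $m\to\infty$, because $\norm{g_n}$ need not be bounded in $n$. Relation-valuedness causes no difficulty, since any choice of $g_n$ gives the same value of the form.
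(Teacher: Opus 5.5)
Your proof is correct and uses the same key observation as the paper: for fixed $u\in\mc V$ with $(u,f)\in S$, the quantity $\dscp{u,v_m}=\scp{f,v_m}+(1-\alpha)\scp{u,v_m}$ is controlled by $\norm{v_m}$ and so tends to $0$. The paper then passes to the $\tnorm{\cdot}$-limit $v\in\mc W$ and gets $v=0$ because $\dscp{u,v}=0$ for all $u$ in the dense subspace $\mc V$; you instead unwind this into a direct $\varepsilon$-estimate via $\tnorm{v_n}^2=\dscp{v_n,v_n-v_m}+\dscp{v_n,v_m}$, which avoids the completion and the extended inner product.
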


\begin{proof}
That (i) holds is obvious.
To show (ii) let $n\mapsto v_n\in \mc V$ be a Cauchy sequence with respect to $\tnorm{\cdot}$ for which $\norm{v_n}\to0$.
Let $v$ be the limit of $v_n$ with respect to $\tnorm{\cdot}$ and let $(u,f)$ be an arbitrary element of $S$.
Then $\dscp{u,v_n}=\scp{f,v_n}+(1-\alpha)\scp{u,v_n}$ tends to $0$.
This implies that $\dscp{u,v}=0$ for all $u\in \mc V$ and hence $v=0$.
\end{proof}

\begin{lem}\label{L:A2}
$\mc W$ may be continuously embedded{\,}\footnote{Recall that an embedding is a linear injection.} into $\mc X$.
\end{lem}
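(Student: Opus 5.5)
The plan is the standard Weidmann-type argument. First define a map $\iota:\mc W\to\mc X$ by continuous extension of the identity on $\mc V$. Then use Lemma \ref{L:A1}(ii) to show this extension is injective.

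For $u\in\mc W$, choose a sequence $u_n\in\mc V$ with $\tnorm{u_n-u}\to0$. Then $(u_n)$ is Cauchy with respect to $\tnorm\cdot$. By Lemma \ref{L:A1}(i), $\norm{u_n-u_m}\leq\tnorm{u_n-u_m}$, so $(u_n)$ is also Cauchy in $\mc H$. Its limit therefore lies in the closure $\mc X$ of $\mc V$. Set $\iota u=\lim_{n\to\infty}u_n$ (limit in $\mc H$).

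This is well defined. If $u_n'$ is another approximating sequence, then $\norm{u_n-u_n'}\leq\tnorm{u_n-u_n'}\to0$, so both sequences have the same $\mc H$-limit. Linearity of $\iota$ is immediate from linearity of limits. Passing to the limit in (i) gives $\norm{\iota u}\leq\tnorm u$, so $\iota$ is bounded, hence continuous. On $\mc V$ the map $\iota$ is the identity.

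The only substantive step is injectivity. Suppose $\iota u=0$, and let $u_n\in\mc V$ with $\tnorm{u_n-u}\to0$. Then $(u_n)$ is $\tnorm\cdot$-Cauchy and $\norm{u_n}\to\norm{\iota u}=0$. Lemma \ref{L:A1}(ii) now gives $\tnorm{u_n}\to0$. Since $u_n\to u$ in $\mc W$, this forces $\tnorm u=\lim\tnorm{u_n}=0$, so $u=0$.

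There is one point to watch in (ii): the lemma as stated concludes $\tnorm{v_n}\to0$, but its proof shows the $\mc W$-limit $v$ is $0$. Either reading suffices here. The reason is that $\tnorm{u_n}\to\tnorm u$ by continuity of the norm, so identifying $u$ with the $\tnorm\cdot$-limit of $(u_n)$ closes the argument. Hence $\iota$ is a continuous linear injection, i.e., a continuous embedding of $\mc W$ into $\mc X$.
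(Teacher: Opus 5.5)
Your proposal is correct and follows the paper's proof essentially step for step: you define the map on $\mc W$ through $\tnorm{\cdot}$-approximating sequences from $\mc V$, check that it is well defined, linear and bounded by $1$ using Lemma \ref{L:A1}(i), and obtain injectivity from Lemma \ref{L:A1}(ii). Your side remark about the two readings of (ii) is harmless, since the $\tnorm{\cdot}$-limit $v$ being $0$ in $\mc W$ is the same as $\tnorm{v_n}\to0$.
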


\begin{proof}
For $f$ in $\mc W$ choose a sequence $n\mapsto f_n\in \mc V$ such that $\tnorm{f_n-f}\to0$.
This sequence is Cauchy with respect to $\tnorm{\cdot}$.
Therefore it is Cauchy and thus convergent in $\mc X$ with respect to $\norm{\cdot}$.
Denote the corresponding limit by $\phi(f)$, observing that $\phi(f)$ remains unchanged if a different sequence in $\mc V$ approaching $f$ is chosen.
Thus we have defined a map $\phi:\mc W\to\mc X$.
It is clear that $\phi$ is linear.
To show it is injective assume $\phi(f)=0$.
Suppose $\tnorm{f_n-f}\to0$ with $f_n\in\mc V$.
Then $n\mapsto f_n$ is Cauchy with respect to $\tnorm{\cdot}$ and $\norm{f_n}=\norm{f_n-\phi(f)}\to0$.
By Lemma \ref{L:A1} we have $\tnorm{f_n}\to0$.
Hence
$$\tnorm{f}\leq\tnorm{f-f_n}+\tnorm{f_n}\to0$$
so that $f=0$ proving injectivity of $\phi$.

Finally, $\phi$ is bounded by $1$ (and hence continuous) since
\[\norm{\phi(f)}=\lim_{n\to\infty} \norm{f_n}\leq \lim_{n\to\infty} \tnorm{f_n}=\tnorm{f},\]
if $\tnorm{f_n-f}\to0$ with $f_n\in\mc V$.
\end{proof}

Identifying $\phi(f)$ and $f$ we may consider $\mc W$ to be subset of $\mc X$ and hence of $\mc H$.

We can now define the Friedrichs extension $S_F$ of $S$.

\begin{thm}\label{Friedrichs}
Suppose $S$ is a symmetric linear relation in $\mc H\times\mc H$ which is bounded below.
Then the relation
$$S_F=\{(u,f)\in S^*: \exists\; n\mapsto (u_n,f_n)\in S: u_n\to u \text{ and } \scp{f_n,u_n}\to\scp{f,u}\}$$
is a self-adjoint extension of $S$.
\end{thm}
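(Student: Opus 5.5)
We plan to follow Weidmann's strategy: identify $S_F$ with the relation obtained from the form $\dscp{\cdot,\cdot}$ on $\mc W$ via a Riesz representation, and show this representing relation is self-adjoint. Without loss of generality we take $\alpha=1$ by replacing $S$ with $S+(1-\alpha)\id$; this shifts $S^*$ and $S_F$ by the same amount. Then $\dscp{u,v}=\scp{f,v}$ for $(u,f)\in S$, and $\norm u\le\tnorm u$ on $\mc W$ by Lemma \ref{L:A2}.

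First we define
$$\tilde S=\{(u,f)\in\mc W\times\mc H:\ \dscp{u,v}=\scp{f,v}\text{ for all }v\in\mc W\}.$$
This relation is symmetric: for $(u,f),(v,g)\in\tilde S$ we have $\scp{f,v}=\dscp{u,v}=\ov{\dscp{v,u}}=\scp{u,g}$. It is also bounded below by $1$, since $\scp{f,u}=\tnorm u^2\ge\norm u^2$. For surjectivity of $\ran\tilde S=\mc H$, fix $f\in\mc H$. The functional $v\mapsto\scp{f,v}$ is bounded on $\mc W$ because $|\scp{f,v}|\le\norm f\tnorm v$. Riesz's theorem in $\mc W$ therefore gives $u\in\mc W$ with $\dscp{u,v}=\scp{f,v}$. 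Hence $\tilde S^{-1}$ is an everywhere defined, symmetric, hence bounded self-adjoint operator, and so $\tilde S$ is self-adjoint. The multivalued part is $\mc V^\perp=\mc X^\perp$, which is consistent with this.

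Next we show $S\subset\tilde S\subset S^*$. The first inclusion follows by density of $\mc V$ in $\mc W$ and continuity of both sides in $v$. Since $\tilde S$ is self-adjoint and contains $S$, we get $\tilde S=\tilde S^*\subset S^*$.

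It remains to prove $\tilde S=S_F$.

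For $\tilde S\subset S_F$, take $(u,f)\in\tilde S$. Pick $u_n\in\mc V$ with $\tnorm{u_n-u}\to0$ and $(u_n,f_n)\in S$. Then $u_n\to u$ in $\mc H$, and $\scp{f_n,u_n}=\tnorm{u_n}^2\to\tnorm u^2=\dscp{u,u}=\scp{f,u}$.

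For $S_F\subset\tilde S$, take $(u,f)\in S^*$ and $(u_n,f_n)\in S$ with $u_n\to u$ and $\scp{f_n,u_n}\to\scp{f,u}$. This is the main obstacle, and the plan is as follows.
\begin{enumerate}
\item The quantities $\tnorm{u_n}^2=\scp{f_n,u_n}$ are bounded. Hence a subsequence converges weakly in $\mc W$ to some $\hat u$.
\item Since the embedding into $\mc H$ is continuous, it is weakly continuous. Combined with $u_n\to u$ in $\mc H$, this gives $\hat u=u\in\mc W$.
\item For $v\in\mc V$ with $(v,g)\in S$, we compute $\dscp{u,v}=\lim\dscp{u_n,v}=\lim\scp{u_n,g}=\scp{u,g}=\scp{f,v}$. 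The last equality uses $(u,f)\in S^*$.
\item Density of $\mc V$ in $\mc W$ then yields $\dscp{u,v}=\scp{f,v}$ for all $v\in\mc W$, i.e.\ $(u,f)\in\tilde S$.
\end{enumerate}
Notably, the convergence of $\scp{f_n,u_n}$ is used only for boundedness in this direction.

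Combining both inclusions, $S_F=\tilde S$, which is a self-adjoint extension of $S$. Undoing the shift by $(1-\alpha)$ completes the argument.
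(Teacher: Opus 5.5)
Your proof is correct. Its first half is the paper's construction in different clothing. The shift to $\alpha=1$ leaves $\dscp{\cdot,\cdot}$ and $\mc W$ unchanged, and your $\tilde S^{-1}$ is exactly the paper's bounded operator $R$ (same Riesz representation in $\mc W$). Undoing the shift turns $\tilde S$ into the paper's $T=R^{-1}+(\alpha-1)\id$, and your proof of $\tilde S\subset S_F$ is the paper's proof of $T\subset S_F$.

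The routes genuinely differ only in the reverse inclusion $S_F\subset T$.

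\emph{The paper's route.} It asserts that $S_F$ is symmetric because $\scp{f,u}\in\bb R$ for every $(u,f)\in S_F$. It then closes the chain $S_F^*\subset T^*=T\subset S_F\subset S_F^*$ by maximality of self-adjoint relations. This is shorter, but passing from ``$\scp{f,u}$ is real'' to $S_F\subset S_F^*$ is a polarization argument. It therefore tacitly presupposes that $S_F$ is a linear subspace, which is not evident from the definition. Closure under sums would require the cross terms $\scp{f_n,v_n}$ to converge to $\scp{f,v}$, which essentially needs the $\tnorm{\cdot}$-convergence that is only available a posteriori.

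\emph{Your route.} The weak-compactness argument in $\mc W$ proves $S_F\subset\tilde S$ directly, with no a priori information about $S_F$. Your steps (3)--(4) in fact show more. First, any $(u,f)\in S^*$ with $u\in\mc W$ lies in $\tilde S$, so $S_F=\tilde S=\{(u,f)\in S^*:u\in\mc W\}$. Second, the condition $\scp{f_n,u_n}\to\scp{f,u}$ in the definition of $S_F$ can be weakened to mere boundedness of $\scp{f_n,u_n}$ without changing $S_F$.
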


\begin{proof}
We freely use the notation established above, in particular we let $\alpha$ be a lower bound of $S$.

Fix $v\in\mc H$ and consider the linear functional $L:\mc W\to\bb C:u\mapsto \scp{v,u}$.
Since $|\scp{v,u}|\leq \norm{v}\norm{u}\leq \norm{v}\,\tnorm{u}$, $L$ is bounded and we have that $\scp{v,u}=\dscp{Rv,u}$ for some uniquely determined vector $Rv\in\mc W$.
Therefore we can define a linear map $R:\mc H\to\mc H$ with range in $\mc W$.
Since $0\leq\tnorm{Rv}^2=\scp{v,Rv}\leq \norm{v}\,\tnorm{Rv}$ it follows that $R$ is bounded below by $0$ and that it is continuous.

We now show that $R$ is self-adjoint.
Since
$$\scp{Rv,u}=\ov{\scp{u,Rv}}=\ov{\dscp{Ru,Rv}}=\dscp{Rv,Ru}=\scp{v,Ru}$$
when $(u,Ru)$ and $(v,Rv)$ are in $R$, we have that $R\subset R^*$.
To prove that $R^*\subset R$ let $(f,g)\in R^*$ and $(u,Ru)\in R$.
Then
$$\scp{g,u}=\scp{f,Ru}=\dscp{Rf,Ru}=\ov{\dscp{Ru,Rf}}=\ov{\scp{u,Rf}}=\scp{Rf,u}.$$
Hence $\scp{Rf-g,u}=0$ for all $u\in\mc H$ so that $g=Rf$ and hence $(f,g)=(f,Rf)\in R$.

Now define the self-adjoint linear relation $T=R^{-1}+(\alpha-1)\id\subset\mc H\times\mc H$.
Note that $\dom T=\ran R$ is dense in $(\ker R)^\perp=\mc W$.
To show that $T$ is an extension of $S$ consider an element $(u,f)\in S$ and an element $v\in\mc W$.
Then
$$\dscp{Rf,v}=\scp{f,v}=\dscp{u,v}+(\alpha-1)\scp{u,v}=\dscp{u+(\alpha-1)Ru,v}.$$
It follows that $u+(\alpha-1)Ru=Rf$ or, equivalently, $(u,f)\in R^{-1}+(\alpha-1)\id=T$, i.e., $S\subset T$.
Since $T$ is self-adjoint, it follows that $T\subset S^*$.

If $(u,f)\in T$ and hence $u\in\mc W$, there is a sequence $(u_n,f_n)\in S$ such that $\lim_{n\to\infty}\tnorm{u_n-u}=0$.
Since $\tnorm{\cdot}$ is $\norm{\cdot}$-compatible we get $\lim_{n\to\infty}\norm{u_n-u}=0$.
Moreover,
$$\scp{f_n,u_n}-\scp{f,u} = \tnorm{u_n}^2-\tnorm{u}^2+(\alpha-1)(\norm{u_n}^2-\norm{u}^2).$$
The triangle inequality shows now that the right-hand side in the previous identity tends to $0$.
We have therefore shown that $T\subset S_F$, when $S_F$ is given as stated in the theorem.

To prove the converse conclusion note first that $S_F$ is symmetric since $\scp{f,u}\in\bb R$ for all $(u,f)\in S_F$. 
Hence we have now $T\subset S_F\subset S_F^*$ and this implies 
$$S_F^*\subset T^*=T\subset S_F\subset S_F^*$$
forcing all inclusions to be equalities and concluding our proof. 
\end{proof}

\bibliographystyle{plain}

\end{document}